\documentclass[11pt]{article}

\usepackage{amsmath}
\usepackage{amsfonts}
\usepackage{amsthm}
\usepackage{amssymb}
\usepackage{MnSymbol}
\usepackage{bbm}
\usepackage{cite}
\usepackage{enumerate}
\usepackage[top=1in,bottom=1in,left=1.2in,right=1.2in]{geometry}

\numberwithin{equation}{section}

\newcommand{\R}{\mathbb{R}}
\newcommand{\N}{\mathbb{N}}
\newcommand{\E}{\mathbb{E}}

\newcommand{\calS}{\mathcal{S}}
\newcommand{\inr}[2]{\langle #1, #2 \rangle}

\newcommand{\pdiff}[2]{\frac{\partial #1}{\partial #2}}

\newcommand{\diff}[2]{\frac{d #1}{d #2}}

\newcommand{\diffat}[3]{\left.\frac{d #1}{d #2}\right|_{#3}}
\newcommand{\grad}{\nabla}
\newcommand{\eps}{\epsilon}
\newcommand{\symdiff}{\Delta}

\newcommand{\Inf}{\mathrm{Inf}}
\newcommand{\sgn}{\mathrm{sgn}}
\newcommand{\Stab}{\mathbb{S}}
\DeclareMathOperator{\Cut}{Cut}
\DeclareMathOperator{\MaxCut}{MaxCut}

\let\P\relax
\let\Pr\relax
\DeclareMathOperator{\P}{Pr}
\DeclareMathOperator{\Pr}{Pr}

\newtheorem{theorem}{Theorem}[section]

\newtheorem{conjecture}[theorem]{Conjecture}
\newtheorem{lemma}[theorem]{Lemma}
\newtheorem{corollary}[theorem]{Corollary}
\newtheorem{proposition}[theorem]{Proposition}

\title{Robust Optimality of Gaussian Noise Stability}
\author{Elchanan Mossel\footnote{U.C. Berkeley. Supported by grants NSF (DMS 1106999 ) and ONR (DOD ONR N000141110140)} and Joe Neeman\footnote{U.C. Berkeley. Supported by grants NSF (DMS 1106999 ) and ONR (DOD ONR N000141110140)}}

\begin{document}
\maketitle

\begin{abstract}
 We prove that under the Gaussian measure, half-spaces are
 uniquely the most noise stable sets. We also prove a quantitative
 version of uniqueness, showing that a set which is almost optimally
 noise stable must be close to a half-space.
 This extends a theorem of Borell,
 who proved the same result but without uniqueness, and it also answers
 a question of Ledoux, who asked whether it was possible to prove
 Borell's theorem using a direct semigroup argument.
 Our quantitative uniqueness result has various applications
 in diverse fields.
\end{abstract}

\section{Introduction}

Gaussian stability theory is a rich extension of Gaussian isoperimetric theory. 
As such it connects
numerous areas of mathematics including probability, geometry~\cite{Borell:85}, concentration
and high dimensional phenomena~\cite{ledoux:book}, re-arrangement 
inequalities~\cite{BurchardSchmuckenschlager:01,IsakssonMossel:12}
and more. On the other hand, this theory has recently found
fascinating applications in combinatorics and theoretical computer science.
It was essential in~\cite{MoOdOl:10}
for proving the ``majority is stablest'' conjecture~\cite{Kalai:02,KKMO:04},
the ``it ain't over until it's over'' conjecture~\cite{Kalai:01},
and for establishing the unique games computational hardness~\cite{Khot:02}
of numerous optimization problems including, for example, constraint satisfaction
problems~\cite{Austrin:07b,DiMoRe:06,Raghavendra:08,KKMO:07}.

The standard measure of stability of a set is the probability
that positively correlated standard Gaussian vectors both lie in the set. 
The main result in this area, which is used in all of the applications
mentioned above,
is that half-spaces have optimal stability 
among all sets with a given Gaussian measure.
This fact was originally proved by Borell~\cite{Borell:85}, in a
difficult proof using Ehrhard symmetrization.
Recently, two different proofs of Borell's result have emerged.
First, Isaksson and the first author~\cite{IsakssonMossel:12}
applied some recent advances in
spherical symmetrization~\cite{BurchardSchmuckenschlager:01} 
to give an proof that also generalizes to a problem
involving more than two Gaussian vectors.
Then Kindler and O'Donnell~\cite{KindlerOdonnell:12},
using the sub-additivity idea of Kane~\cite{Kane:11}, 
gave a short and elegant proof,
but only for sets of measure $1/2$ and
for some special values of the correlation.
 
In this paper, we will give a novel proof of Borell's
result. In doing so, we answer a question posed 18 years ago
by Ledoux~\cite{Ledoux:94}, who used semigroup methods to show that
Borell's inequality implies the Gaussian isoperimetric inequality
and then asked whether similar methods could be used to give a short and direct
proof of Borell's inequality.
Moreover, our proof will allow us to strengthen Borell's result and its
discrete applications.
First, we will demonstrate that half-spaces are the {\em unique} optimizers
of Gaussian stability (up to almost sure equality).
Then we will quantify this statement, by showing that if the stability of a set
is close to optimal given its measure, then the set must be close to a half-space. 

The questions of equality and robustness of isoperimetric inequalities
can be rather more subtle than the inequalities themselves.
In the case of the standard Gaussian isoperimetric result, it took
about 25 years from the time the inequality 
was established~\cite{st78,bor75} before the
equality cases were fully characterized~\cite{ck01}
(although the equality cases among sufficiently nice sets were known
earlier~\cite{Ehrhard:86}). Robust versions of the standard Gaussian
isoperimetric result were first established only
recently~\cite{italian11,MosselNeeman:12}. 
Here, for the first time since Borell's original proof~\cite{Borell:85} more than
25 years ago, 
we establish both that half-spaces are the unique maximizers 
{\em and} that a robust version of this statement is also true.

\subsection{Discrete applications}
From our Gaussian results, we derive robust versions of some of the main
discrete applications of Borell's result, including a robust version of the
``majority is stablest'' theorem~\cite{MoOdOl:10}. The ``majority is stablest''
theorem
concerns subsets $A$ of the discrete cube $\{-1, 1\}^n$ with the property
that each coordinate $x_i$ has only a small \emph{influence} on whether
$x \in A$ (see~\cite{MoOdOl:10} for a precise definition);
the theorem says that over all such sets $A$, the ones with that are
most noise stable take the form $\{x: \sum a_i x_i \le b\}$. From the results
we prove here, it is possible to obtain a robust version of this, which says
that any sets $A\subset \{-1, 1\}^n$ with small coordinate influences and
almost optimal noise sensitivity must be close to some set of the
form $\{x: \sum a_i x_i \le b\}$.

A robust form of the ``majority is stablest'' theorem immediately implies
a robust version of the quantitative Arrow theorem. In economics,
Arrow's theorem~\cite{Arrow:50} says that any non-dictatorial election system
between three candidates
which satisfies two natural properties (namely, the ``independence of irrelevant
alternatives'' and ``neutrality'') has a chance of producing a non-rational
outcome. (By non-rational outcome, we mean that there are three candidates,
$A$, $B$ and $C$ say, such that candidate $A$ is preferred to candidate $B$,
$B$ is preferred to $C$ and $C$ is preferred to $A$.)
Kalai~\cite{Kalai:02,Kalai:04} 
showed that if the election system is such that
each voter has only a small influence on the outcome, then the probability
of a non-rational outcome is substantial; moreover, the ``majority
is stablest'' theorem~\cite{MoOdOl:10} implies that the probability of
a non-rational outcome can be minimized by using a simple majority vote
to decide, for each pair of candidates, which one is preferred.
A robust version of the ``majority is stablest'' theorem implies
immediately that (weighted) majority-based
voting methods are essentially the only low-influence
methods that minimizes the probability of a non-rational outcome.

In a different direction, our robust noise stability result
has an application in hardness of approximation, specifically in the
analysis of the well-known Max-Cut optimization problem. The Max-Cut
problem seeks a partition of a graph $G$ into two pieces such that
the number of edges from one piece to the other is maximal. This problem
is NP-hard~\cite{Karp:72} but Goemans and
Williamson~\cite{GoemansWilliamson:95} gave an approximation algorithm
with an approximation ratio of about 0.878. Their algorithm works by embedding
the graph $G$ on a high-dimensional sphere and then cutting
it using a random hyperplane. Feige and Schechtman~\cite{FeigeSchechtman:02}
showed that a random hyperplane is the optimal way to cut this embedded graph;
with our robust noise stability theorem, we can show that any almost-optimal
cutting procedure is almost the same as using a random hyperplane. 
The latter result is derived via a novel isoperimetric result for spheres in high dimensions where
two points are connected if their inner product is exactly some prescribed number $\rho$. 

\subsection{Borell's theorem and a functional variant}
Let $\gamma_n$ be the standard Gaussian measure on $\R^n$.
For $-1 < \rho < 1$ let $X$ and $Y$ be jointly Gaussian
random vectors on $\R^n$, such that $X$ and $Y$ are standard Gaussian
vectors and $\E X_i Y_j = \delta_{ij} \rho$.
We will write $\Pr_\rho$ for the joint probability distribution
of $X$ and $Y$. We will also write $\phi$ for the density of $\gamma_1$
and $\Phi$ for its distribution function:
\begin{align*}
 \phi(x) &= \frac{1}{\sqrt{2\pi}} e^{-x^2/2} \\
 \Phi(x) &= \int_{-\infty}^x \phi(y)\ dy.
\end{align*}

\begin{theorem}[Borell~\cite{Borell:85}]\label{thm:borell}
  For any $0 < \rho < 1$ and any measureable $A_1, A_2 \subset \R^n$,
  \begin{equation}\label{eq:borell}
    \Pr_\rho(X \in A_1, Y \in A_2) \le \Pr_\rho(X \in B_1, Y \in B_2)
  \end{equation}
  where
  \begin{align*}
    B_1 &= \{x \in \R^n: x_1 \le \Phi^{-1}(\gamma_n(A_1))\} \\
    \text{and } B_2 &= \{x \in \R^n: x_1 \le \Phi^{-1}(\gamma_n(A_2))\}
  \end{align*}
  are parallel half-spaces with the same volumes as $A_1$ and $A_2$
  respectively.

  If $-1 < \rho < 0$ then the inequality~\eqref{eq:borell} is
  reversed.
\end{theorem}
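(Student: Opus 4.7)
The plan is to answer Ledoux's question affirmatively with a direct semigroup argument. First I would lift Borell's set inequality to a functional statement. Let $J_\rho : [0,1]^2 \to [0,1]$ denote the bivariate Gaussian CDF
\[
J_\rho(a,b) \;=\; \Pr_\rho\bigl(X_1 \le \Phi^{-1}(a),\ Y_1 \le \Phi^{-1}(b)\bigr),
\]
so that the theorem is equivalent to the assertion
\[
\E[f(X)\, g(Y)] \;\le\; J_\rho(\E f,\, \E g) \qquad \text{for all measurable } f, g : \R^n \to [0,1]
\]
(with the inequality reversed when $\rho < 0$); Theorem~\ref{thm:borell} is recovered by taking $f = \mathbf{1}_{A_1}$ and $g = \mathbf{1}_{A_2}$.

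To prove the functional version, I would introduce the Ornstein--Uhlenbeck semigroup $P_s h(x) = \E h(e^{-s} x + \sqrt{1-e^{-2s}} Z)$, and define the interpolating quantity
\[
G(s) \;=\; \E\bigl[\, J_\rho(P_s f(X),\ P_s g(Y))\, \bigr], \qquad s \ge 0,
\]
where $(X, Y)$ remains the fixed $\rho$-correlated pair. The two endpoints of $G$ are exactly the two sides of the desired inequality: the corner values $J_\rho(0,0) = J_\rho(1,0) = J_\rho(0,1) = 0$ and $J_\rho(1,1) = 1$ give $G(0) = \E[f(X) g(Y)]$ when $f, g$ are indicators, while ergodicity of $P_s$ gives $G(\infty) = J_\rho(\E f,\, \E g)$. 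Thus the inequality reduces to monotonicity of $G$, and it suffices to establish the appropriate sign of $G'(s)$ for all $s \ge 0$.

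Differentiating under the expectation, using $\partial_s P_s = L P_s$ with $L = \Delta - x \cdot \grad$, and integrating by parts against the joint log-concave density of $(X, Y)$, I would rewrite $G'(s)$ as the integral of a pointwise quadratic form in the $2n$-vector $(\grad P_s f(x),\, \grad P_s g(y))$. The coefficient matrix of this form couples the Hessian of $J_\rho$ evaluated at $(P_s f(X), P_s g(Y))$ with a $2 \times 2$ block encoding the correlation structure of $(X, Y)$. The heart of the argument, and what I expect to be the main obstacle, is to verify that this quadratic form has the correct definite sign at every point --- non-negative when $\rho > 0$ and non-positive when $\rho < 0$. The pointwise Hessian inequality should follow from the explicit structure of $J_\rho$ as a bivariate normal CDF, in particular the identity
\[
\partial_\rho J_\rho(a,b) \;=\; \phi\!\bigl(\Phi^{-1}(a)\bigr)\,\phi\!\bigl(\Phi^{-1}(b)\bigr)\,\partial_a \partial_b J_\rho(a,b),
\]
which ties the mixed second derivative of $J_\rho$ to the correlation parameter, together with companion identities for the diagonal second derivatives.

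Once monotonicity is in hand, letting $s \to \infty$ yields the theorem; the case $-1 < \rho < 0$ follows from the same computation because the controlling cross term picks up the sign of $\rho$. Tracking equality cases in the pointwise Hessian bound --- which should force $\grad P_s f(X)$ and $\grad P_s g(Y)$ to be everywhere parallel to a single fixed direction --- would then give uniqueness of half-spaces among optimizers, and quantifying the strictness in the bound is the natural route to the robust version of Borell's inequality promised in the abstract.
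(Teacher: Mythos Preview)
Your proposal is correct and is essentially the paper's proof: the interpolation $G(s)=\E_\rho J_\rho(P_s f(X),P_s g(Y))$ is exactly the paper's $R_t$, and monotonicity of this quantity is the whole argument. The one point worth flagging is the sign verification you identify as the main obstacle: rather than working with the Hessian of $J_\rho$ as a quadratic form in $(\nabla P_s f,\nabla P_s g)$, the paper substitutes $v_t=\Phi^{-1}\circ P_t f$, $w_t=\Phi^{-1}\circ P_t g$ and integrates by parts in $X$ after writing $Y=\rho X+\sqrt{1-\rho^2}\,Z$ and conditioning on $Z$, which yields the closed form
\[
\frac{dR_t}{dt}=\frac{\rho}{2\pi\sqrt{1-\rho^2}}\,\E_\rho\exp\Big(-\frac{v_t^2+w_t^2-2\rho v_t w_t}{2(1-\rho^2)}\Big)\,|\nabla v_t-\nabla w_t|^2.
\]
This is a perfect square multiplied by $\rho$, so the sign (and its reversal for $\rho<0$) is immediate and the equality case $\nabla v_t\equiv\nabla w_t\equiv\mathrm{const}$ falls out for free; your Hessian route would eventually uncover the same structure, but the $\Phi^{-1}$ change of variables is the shortcut that bypasses any definiteness check.
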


Like many other inequalities about sets, Theorem~\ref{thm:borell}
has a functional analogue. To state it,
we define the function
\[
  J(x, y) = J(x, y; \rho) = \Pr_\rho (X_1 \le \Phi^{-1}(x), Y_1 \le \Phi^{-1}(y)).
\]

\begin{theorem}\label{thm:functional-borell}
  For any measurable functions $f, g: \R^n \to [0, 1]$ and any $0 < \rho < 1$,
  \begin{equation}\label{eq:functional-borell}
    \E_\rho J(f(X), g(Y); \rho) \le
    J(\E f, \E g; \rho)
  \end{equation}

  If $-1 < \rho < 0$ then the inequality~\eqref{eq:functional-borell}
  is reversed.
\end{theorem}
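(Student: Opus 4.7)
The plan is to prove Theorem~\ref{thm:functional-borell} via a stochastic-calculus interpolation, giving a semigroup-style proof of the kind that Ledoux~\cite{Ledoux:94} asked for. Both sides of~\eqref{eq:functional-borell} will be realized as endpoint values of a single functional $\Phi(s)$, and the inequality will follow by showing that $\Phi$ is monotone via It\^o's formula.

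Concretely, let $(B^{(1)}_t, B^{(2)}_t)_{t \in [0,1]}$ be $n$-dimensional standard Brownian motions with cross-covariance $\E\, B^{(1)}_t (B^{(2)}_s)^{\top} = \rho\min(s,t)\, I_n$, so that $(B^{(1)}_1, B^{(2)}_1) \stackrel{d}{=} (X,Y)$. Let $P_t h(x) = \E h(x + \sqrt{t}\, Z)$ denote the heat semigroup on $\R^n$. Setting $u_s := P_{1-s}f$, $v_s := P_{1-s}g$, $F_s := u_s(B^{(1)}_s)$, and $G_s := v_s(B^{(2)}_s)$, the processes $F$ and $G$ are $[0,1]$-valued martingales representing $\E[f(X)\mid\mathcal{F}_s]$ and $\E[g(Y)\mid\mathcal{F}_s]$. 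Hence $\Phi(s) := \E J(F_s, G_s; \rho)$ satisfies $\Phi(0) = J(\E f, \E g; \rho)$ and $\Phi(1) = \E_\rho J(f(X), g(Y); \rho)$, so it is enough to show that $\Phi$ is non-increasing when $\rho > 0$ and non-decreasing when $\rho < 0$. By It\^o's formula the drift of $J(F_s, G_s;\rho)$ equals
\[
\tfrac{1}{2}\bigl[ J_{xx}\, |\grad u_s|^2 + 2\rho\, J_{xy}\, \grad u_s \cdot \grad v_s + J_{yy}\, |\grad v_s|^2 \bigr]\, ds,
\]
with $J_{xx}, J_{xy}, J_{yy}$ evaluated at $(F_s, G_s)$ and $\grad u_s, \grad v_s$ at $B^{(1)}_s, B^{(2)}_s$. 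So I must show the bracketed quadratic form has sign opposite to $\rho$.

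The crucial algebraic fact to verify is the identity
\[
J_{xx}\, J_{yy} \;=\; \rho^2\, J_{xy}^{\,2},
\]
together with $J_{xy} > 0$ and $\mathrm{sgn}(J_{xx}) = \mathrm{sgn}(J_{yy}) = -\mathrm{sgn}(\rho)$. These follow from a direct calculation using $J_x(x,y) = \Phi\bigl((\Phi^{-1}(y) - \rho\Phi^{-1}(x))/\sqrt{1-\rho^2}\bigr)$ and its symmetric counterpart; the product identity reflects the symmetry $\phi(a)\,\phi((b-\rho a)/\sqrt{1-\rho^2}) = \phi(b)\,\phi((a-\rho b)/\sqrt{1-\rho^2})$, where $a = \Phi^{-1}(x)$ and $b = \Phi^{-1}(y)$. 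Granted these, the $2\times 2$ matrix $\bigl(\begin{smallmatrix} J_{xx} & \rho J_{xy} \\ \rho J_{xy} & J_{yy}\end{smallmatrix}\bigr)$ has determinant zero and trace of sign opposite to $\rho$, so it is semidefinite of the correct sign. Combining this with the Cauchy--Schwarz bound $|\grad u_s \cdot \grad v_s| \le |\grad u_s|\,|\grad v_s|$ forces the It\^o drift to have sign opposite to $\rho$, establishing the required monotonicity.

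The principal obstacle I foresee is regularity rather than anything conceptual. Because $J(\cdot,\cdot;\rho)$ and its partials blow up on the boundary of $[0,1]^2$, It\^o cannot be applied directly when $f$ or $g$ take values at (or close to) $0$ or $1$. I would handle this by first proving the inequality assuming $\eps \le f, g \le 1-\eps$, so that $(F_s, G_s)$ stays in a compact subset of $(0,1)^2$ on which $J$ is smooth, and then recovering the general case by applying the restricted inequality to $\eps + (1-2\eps)f$ and $\eps + (1-2\eps)g$ and letting $\eps \downarrow 0$. On this truncated range, boundedness of the derivatives of $u_s$ and $v_s$ makes it routine to check that the local-martingale part of $J(F_s, G_s)$ has zero expectation, so that integrating the drift bound over $s \in [0,1]$ and taking expectations yields the theorem.
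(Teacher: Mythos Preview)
Your proposal is correct. It is at heart the same semigroup interpolation as the paper's proof, but phrased in It\^o/Brownian-motion language rather than via direct differentiation of $R_t = \E_\rho J(P_t f(X), P_t g(Y))$ along the Ornstein--Uhlenbeck flow: your It\^o drift and the paper's $\diff{R_t}{t}$ are the same computation (Gaussian integration by parts is the analytic counterpart of the It\^o correction), and both rest on the identity $\phi(a)\,\phi\bigl((b-\rho a)/\sqrt{1-\rho^2}\bigr) = \phi(b)\,\phi\bigl((a-\rho b)/\sqrt{1-\rho^2}\bigr)$ that you quote. The one substantive difference is that you stop at the semidefiniteness of the matrix $\bigl(\begin{smallmatrix} J_{xx} & \rho J_{xy} \\ \rho J_{xy} & J_{yy}\end{smallmatrix}\bigr)$, whereas the paper carries the algebra one step further---using $J_{xx}/J_{xy} = -\rho\,\phi(b)/\phi(a)$ and $J_{yy}/J_{xy} = -\rho\,\phi(a)/\phi(b)$---to obtain the explicit formula of Lemma~\ref{lem:diff-R_t},
\[
\diff{R_t}{t} \;=\; \frac{\rho}{2\pi\sqrt{1-\rho^2}}\;\E_\rho \exp\Bigl(-\tfrac{v_t^2 + w_t^2 - 2\rho v_t w_t}{2(1-\rho^2)}\Bigr)\,\big|\grad v_t - \grad w_t\big|^2,
\]
with $v_t = \Phi^{-1}\circ P_t f$ and $w_t = \Phi^{-1}\circ P_t g$. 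For the bare inequality of Theorem~\ref{thm:functional-borell} your stopping point is perfectly adequate (and the semidefiniteness phrasing makes the ``$\rho$-concavity'' of $J$ transparent), but the explicit $|\grad v_t - \grad w_t|^2$ form is what the paper then exploits to identify the equality cases (Theorem~\ref{thm:equality}) and to drive the entire robustness analysis, so it is worth pushing your computation that extra step if you intend to go further.
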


To see that Theorem~\ref{thm:functional-borell} generalizes
Theorem~\ref{thm:borell}, consider $f = 1_{A_1}$ and $g = 1_{A_2}$.
Note that $J(0, 0) = J(1, 0) = J(0, 1) = 0$, while $J(1, 1) = 1$.
Thus, $J(f(X), g(Y)) = 1_{X \in A_1, Y \in A_2}$ and so the
left hand side (resp.\ right hand side)
of Theorem~\ref{thm:functional-borell} is the same
as the left hand side (resp.\ right hand side) of
Theorem~\ref{thm:borell}.

In fact, we can also go in the other direction and prove
Theorem~\ref{thm:functional-borell} from Theorem~\ref{thm:borell}:
given $f, g: \R^n \to [0, 1]$,
define $A_1, A_2 \subset \R^{n+1}$ to be the epigraphs of $\Phi^{-1} \circ f$
and $\Phi^{-1} \circ g$ respectively.
It can be easily checked, then, that
\[\E_\rho J(f(X), g(Y); \rho)
  = \Pr_\rho(\tilde X \in A_1, \tilde Y \in A_2)
\]
where $\tilde X$ and $\tilde Y$ are standard Gaussian vectors on $\R^{n+1}$
with $\E \tilde X_i \tilde Y_i = \delta_{ij} \rho$.
On the other hand, $\E f = \gamma_{n+1}(A_1)$ and $\E g = \gamma_{n+1}(A_2)$
and so the definition of $J$ implies that
\[J(\E f, \E g; \rho)
  = \Pr_\rho(\tilde X \in B_1, \tilde Y \in B_2)
\]
where $B_1$ and $B_2$ are parallel half-spaces with the same volumes as
$A_1$ and $A_2$. Thus, Theorem~\ref{thm:borell} in $n+1$ dimensions
implies Theorem~\ref{thm:functional-borell} in $n$ dimensions.

However, we will give a proof of Theorem~\ref{thm:functional-borell}
that does not rely on Theorem~\ref{thm:borell}. We do this for
two reasons: first, we believe that our proof of Theorem~\ref{thm:functional-borell}
is simpler than existing proofs of Theorem~\ref{thm:borell}.
More importantly, our proof of Theorem~\ref{thm:functional-borell} is a good starting point 
for the main results of the paper. In particular, it allows
 us to characterize the cases of equality and near-equality.
As we mentioned earlier, it is not known
how to get such results from existing proofs of
Theorem~\ref{thm:borell}.

\subsection{New results: Equality}
In our first main result, we get a complete characterization of
the functions for which equality in Theorem~\ref{thm:functional-borell}
is attained.

\begin{theorem}\label{thm:equality}
  For any measurable functions $f, g: \R^n \to [0, 1]$ and any
  $-1 < \rho < 1$ with $\rho \ne 0$,
  if equality is attained in~\eqref{eq:functional-borell}
  then there exist $a, b, d \in \R^n$ such that either
  \begin{align*}
    f(x) &= \Phi(\inr{a}{x - b} ) \text{ a.s.} \\
    g(x) &= \Phi(\inr{a}{x - d} ) \text{ a.s.}
  \end{align*}
  or
  \begin{align*}
    f(x) &= 1_{\inr{a}{x - b} \ge 0} \text{ a.s.} \\
    g(x) &= 1_{\inr{a}{x - d} \ge 0} \text{ a.s.}
  \end{align*}
\end{theorem}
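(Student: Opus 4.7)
The plan is to trace through the semigroup proof of Theorem~\ref{thm:functional-borell} and identify when the pointwise nonnegativity driving that proof becomes an equality. Set $F_t = P_t f$ and $G_t = P_t g$, where $(P_t)_{t \ge 0}$ denotes the Ornstein--Uhlenbeck semigroup on $\R^n$, and define $\Psi(t) = \E J(F_t(X), G_t(Y); \rho)$. Since $\Psi(0) = \E J(f(X), g(Y); \rho)$ and $\Psi(\infty) = J(\E f, \E g; \rho)$, the proof of Theorem~\ref{thm:functional-borell} amounts to monotonicity of $\Psi$; a direct calculation using the generator of $P_t$ together with a PDE satisfied by $J$ rewrites $\Psi'(t)$ as the expectation of a pointwise quadratic form $Q_t$ in $\nabla F_t(X)$ and $\nabla G_t(Y)$ whose coefficient matrix is built from the Hessian of $J$ evaluated at $(F_t(X), G_t(Y))$. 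The key lemma behind Theorem~\ref{thm:functional-borell} is that $Q_t$ has the appropriate pointwise sign for every $t$.

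Under the hypothesis of equality in~\eqref{eq:functional-borell}, monotonicity forces $\Psi$ to be constant, so $\Psi'(t) = 0$ for every $t > 0$; the pointwise sign of $Q_t$ then upgrades this to $Q_t(X,Y) = 0$ almost surely. I would next classify when this quadratic form vanishes. Because $\rho \ne 0$, the Hessian of $J$ should have a one-dimensional kernel spanned by an explicit vector, and enforcing $Q_t \equiv 0$ should force $\nabla F_t(x)$ and $\nabla G_t(y)$ to be parallel to a common vector $a_t \in \R^n$, while a scalar identity extracted from the kernel direction shows that $\Phi^{-1}(F_t)$ and $\Phi^{-1}(G_t)$ are affine in the $a_t$ direction with matching slopes. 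Smoothness of $F_t$ and $G_t$ for $t > 0$ globalizes this local rigidity to
\[
  F_t(x) = \Phi(\inr{a_t}{x - b_t}), \qquad G_t(x) = \Phi(\inr{a_t}{x - d_t})
\]
for parameters $a_t, b_t, d_t \in \R^n$.

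The final step is to let $t \downarrow 0$. Since $P_t f \to f$ and $P_t g \to g$ in $L^1(\gamma_n)$, the parameters $(a_t, b_t, d_t)$ admit a limit along a subsequence, up to the possibility that $|a_t| \to \infty$. If $|a_t|$ remains bounded, passing to the limit gives $f(x) = \Phi(\inr{a}{x - b})$ and $g(x) = \Phi(\inr{a}{x - d})$ a.s., the first alternative of the theorem. If instead $|a_t| \to \infty$ along a subsequence on which $a_t/|a_t|$ converges, then the profiles $\Phi(|a_t|\inr{a_t/|a_t|}{x - b_t})$ converge pointwise to the indicator of a half-space, and analogously for $g$, yielding the second alternative. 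The fact that $f$ and $g$ land in the \emph{same} alternative, and with the same direction $a$, is exactly what the joint vanishing of $Q_t$ (which coupled the two gradients at every scale $t$) delivers.

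The main obstacle will be the rigidity step: converting the pointwise relation $Q_t(X,Y) = 0$ into the global statement that $F_t$ and $G_t$ are both $\Phi$ composed with a single affine function sharing a common direction. One has to handle the sets where $\nabla F_t$ or $\nabla G_t$ vanishes (so that the local direction $a_t$ is not canonically defined there), propagate the direction across $\R^n$ via continuity of the smooth functions $F_t, G_t$, and verify that $F_t$ and $G_t$ must use the same $a_t$ rather than directions that merely span a common line. A secondary difficulty is ensuring that the parameter map $t \mapsto (a_t, b_t, d_t)$ is regular enough to take a clean $t \downarrow 0$ limit, and to guarantee that the bounded-$|a_t|$ and unbounded-$|a_t|$ alternatives are exhaustive and mutually exclusive.
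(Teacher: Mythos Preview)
Your overall strategy matches the paper's: both use the semigroup monotonicity $\Psi'(t)\ge 0$ (resp.\ $\le 0$), observe that equality forces $\Psi'(t)=0$ for all $t>0$, and then read off the rigid structure of $f_t,g_t$. The differences are in how the last two steps are executed, and the paper's choices sidestep exactly the obstacles you flag.

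First, rather than analyzing the Hessian of $J$ and its kernel acting on $(\nabla F_t,\nabla G_t)$, the paper substitutes $v_t=\Phi^{-1}\circ f_t$, $w_t=\Phi^{-1}\circ g_t$ from the outset. Lemma~\ref{lem:diff-R_t} then gives $\Psi'(t)$ as a strictly positive weight times $|\nabla v_t(X)-\nabla w_t(Y)|^2$. So $\Psi'(t)=0$ immediately yields $\nabla v_t(X)=\nabla w_t(Y)$ a.s.; since the conditional law of $Y$ given $X$ has full support, both gradients are the \emph{same constant} $a$, and smoothness makes $v_t,w_t$ affine with common slope. This replaces your kernel/parallelism analysis and the worry about regions where a gradient vanishes with a one-line argument.

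Second, the paper never sends $t\downarrow 0$. Instead it works at a single fixed $t>0$ and uses two facts: the pointwise bound $|\nabla v_t|\le k_t=(e^{2t}-1)^{-1/2}$ (Lemma~\ref{lem:grad-bound-v}), and the explicit pull-back (Lemma~\ref{lem:pull-back}) saying that any $\Phi(\inr{a}{x}-c)$ with $|a|\le k_t$ is $P_{t+s}$ of a half-space indicator for the $s\ge 0$ solving $|a|=k_{t+s}$. Injectivity of $P_t$ then identifies $f$ (and $g$) directly: $s=0$ gives the indicator case, $s>0$ gives the smooth $\Phi$-of-affine case. Because $f_t$ and $g_t$ share the \emph{same} $a$, they share the same $s$, which is why $f$ and $g$ land in the same alternative; no limiting argument or subsequence bookkeeping is needed. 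Your $t\downarrow 0$ route would work, but the paper's fixed-$t$ inversion is both shorter and immune to the regularity issues you anticipated.
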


In particular, the second case of Theorem~\ref{thm:equality}
implies that if $A_1$ and $A_2$ achieve equality in Theorem~\ref{thm:borell}
then $A_1$ and $A_2$ must be almost surely equal to parallel half-spaces.

\subsection{New results: Robustness}\label{sec:results-robust}

Once we know the cases of equality, the next natural thing to ask is
whether they are robust:
if $f$ and $g$ almost achieve equality in~\eqref{eq:functional-borell}
-- in the sense that $\E_\rho J(f(X), g(Y)) \ge J(\E f, \E g) - \delta$ --
does it follow that $f$ and $g$ must be close to some functions of the form
$\Phi(\inr{a}{x-b})$?
In the case of the Gaussian isoperimetric inequality,
which can be viewed as a limiting form of Borell's theorem,
the question of robustness was first addressed
by Cianchi et al.~\cite{italian11}, who showed that the answer
was ``yes,'' and gave a bound that depended on both $\delta$ and $n$.
The authors~\cite{MosselNeeman:12} then proved a similar result
which had no dependence on $n$, but a worse (logarithmic, instead of polynomial)
dependence on $\delta$.
The arguments we will apply here are similar to those used
in~\cite{MosselNeeman:12}, but with some improvements.
In particular, we establish a result with no dependence on the dimension,
and with a polynomial dependence on $\delta$ (although we suspect
that the exponent is not optimal).

\begin{theorem}\label{thm:2-funcs-robustness}
  For measurable functions $f, g: \R^n \to [0, 1]$, define
  \begin{equation}\label{eq:def_delta}
    \delta = \delta(f, g) =
    J(\E f, \E g) - \E_\rho J(f(X), g(Y))
  \end{equation}
  and let
  \[
    m = m(f, g) = \E f (1-\E f) \E g (1 - \E g).
  \]
  For any $0 < \rho < 1$,
  there exist $0 < c(\rho), C(\rho) < \infty$ such that
  for any $f, g: \R^n \to [0, 1]$
  there exist $a, b, d \in \R^n$ such that
  \begin{align*}
    \E |f(X) - \Phi(\inr{a}{X-b})|
    &\le C(\rho) m^{c(\rho)} \delta^{\frac{1}{4} \frac{(1 - \rho)(1-\rho^2)}{1 +3\rho}} \\
    \E |g(X) - \Phi(\inr{a}{X-d})|
    &\le C(\rho) m^{c(\rho)} \delta^{\frac{1}{4} \frac{(1 - \rho)(1-\rho^2)}{1 +3\rho}}.
  \end{align*}
\end{theorem}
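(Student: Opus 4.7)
The strategy is a quantitative version of the semigroup argument that yields Theorem~\ref{thm:functional-borell}. Let $P_t$ denote the Ornstein--Uhlenbeck semigroup on $\R^n$, set $F_t = P_t f$ and $G_t = P_t g$, and consider
\[
  \Psi(t) = \E_\rho J\bigl(F_t(X), G_t(Y); \rho\bigr).
\]
Since $F_t \to \E f$ and $G_t \to \E g$ as $t \to \infty$, one has $\delta = \Psi(\infty) - \Psi(0) = \int_0^\infty \Psi'(t)\,dt$, and the proof of Theorem~\ref{thm:functional-borell} proceeds by showing that $\Psi'(t) \ge 0$ with a precise formula for the integrand.

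Differentiating and integrating by parts in both Gaussian variables gives
\[
  \Psi'(t) = \E\bigl[\mathcal{Q}_\rho\bigl(F_t(X),G_t(Y);\grad F_t(X),\grad G_t(Y)\bigr)\bigr],
\]
where $\mathcal{Q}_\rho$ is a pointwise non-negative quadratic form in $(\grad F_t,\grad G_t)$ whose coefficients come from the Hessian of $J(\cdot,\cdot;\rho)$; non-negativity is equivalent to a specific second-order PDE satisfied by $J$. The kernel of $\mathcal{Q}_\rho$ matches the equality case of Theorem~\ref{thm:equality}: $\mathcal{Q}_\rho = 0$ forces $\grad F_t$ and $\grad G_t$ to be parallel with a prescribed proportionality involving the densities $\phi(\Phi^{-1}(F_t))$ and $\phi(\Phi^{-1}(G_t))$, which integrates to the affine form $F_t = \Phi(\inr{a}{\cdot - b})$.

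The robustness argument then has three parts. \textbf{(i)} Since $\int_0^\infty \Psi'(t)\,dt = \delta$, for any horizon $T$ a pigeonhole argument produces some $t_\star \in [0,T]$ with $\Psi'(t_\star) \le \delta/T =: \eta$. \textbf{(ii)} A quantitative rigidity lemma shows that whenever $\E\mathcal{Q}_\rho(F_{t_\star},G_{t_\star};\grad F_{t_\star},\grad G_{t_\star}) \le \eta$, there exist $a_\star, b_\star, d_\star$ such that $F_{t_\star}$ and $G_{t_\star}$ are $L^1$-close to $\Phi(\inr{a_\star}{x-b_\star})$ and $\Phi(\inr{a_\star}{x-d_\star})$ with error $C\,m^{c(\rho)}\eta^{\alpha}$; the factor $m$ arises because the weights in $\mathcal{Q}_\rho$ degenerate near the boundary values $0$ and $1$, and repeated Cauchy--Schwarz against these weights is the source of the fraction $1/4$ in the final exponent. \textbf{(iii)} We transfer the closeness from $F_{t_\star}$ back to $f$ by exploiting the fact that the family $\{\Phi(\inr{a}{\cdot - b})\}$ is preserved by the Ornstein--Uhlenbeck semigroup (with $a$ rescaled appropriately), so candidate parameters $(a,b)$ for $f$ can be read off from $(a_\star, b_\star)$. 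Bounding $\E|f(X) - \Phi(\inr{a}{X-b})|$ in terms of $\E|F_{t_\star}(X) - \Phi(\inr{a_\star}{X-b_\star})|$ proceeds via Hermite-spectral analysis: low-degree Hermite components are controlled directly by the $L^1$-closeness of $F_{t_\star}$, while the high-degree tail is controlled by hypercontractivity combined with $\|f\|_\infty \le 1$. Optimizing $T$ (equivalently $t_\star$) to balance (ii) and (iii) yields the stated polynomial exponent.

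\textbf{Main obstacle.} The hardest step is (iii). The operator $P_{t_\star}$ damps the $k$-th Hermite component by $e^{-kt_\star}$, so naively inverting it would amplify high-frequency errors exponentially in $t_\star$; the correct compromise -- using hypercontractivity to truncate above a frequency cutoff and controlling the low-frequency piece by the output of (ii) -- depends delicately on $\rho$ (through both the Hermite decay rates and the hypercontractive exponents) and on $\eta$. Carrying this optimization out with no dependence on the dimension $n$, rather than in a dimension-dependent form as in~\cite{italian11}, is the central technical improvement, and the intricate exponent $\tfrac14\tfrac{1-\rho-\rho^2+\rho^3}{1-\rho+3\rho^2+\rho^3}$ is precisely the outcome of this balance; we do not expect it to be sharp.
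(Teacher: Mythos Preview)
Your outline is on the right track through step (ii), but step (iii) contains a genuine gap that would prevent you from reaching the polynomial rate in the theorem.

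The Hermite-spectral / hypercontractivity approach you describe for inverting $P_{t_\star}$ is precisely the method used in the authors' earlier paper~\cite{MosselNeeman:12}, and the present paper explicitly records that it yields only \emph{logarithmic} dependence on $\delta$, not polynomial. The difficulty is exactly the one you identify---high-frequency amplification---but the hypercontractive truncation you propose does not beat it: balancing the low/high-frequency pieces forces the cutoff to scale like $\log(1/\delta)$, and the resulting bound is $C(\log(1/\delta))^{-c}$ rather than $\delta^c$. So your claim that ``optimizing $T$ \dots\ yields the stated polynomial exponent'' is not correct with this mechanism.

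What the paper does instead is abandon spectral inversion entirely. It first reduces to the case $f=1_A$, $g=1_{A'}$ (the general case follows by an epigraph trick in $\R^{n+1}$). Then the time-reversal is \emph{geometric}: if $h=1_A-1_B$ with $B$ a half-space, the positive and negative parts of $h$ lie on opposite sides of the hyperplane $\partial B$, which limits the cancellation in $P_t h$ and gives
\[
\gamma(A\symdiff B)\ \lesssim\ (e^{2t}-1)^{1/4}\,\bigl(\E|P_t1_A-P_t1_B|\bigr)^{1/2}.
\]
This is Proposition~\ref{prop:time-rev}, and it is the source of the polynomial rate. There is also a second ingredient you are missing: the output of step (ii) is that $P_t 1_A$ is close to $P_{t+s}1_B$ for some \emph{unknown} $s\ge 0$ (because one only knows $|a|\le k_t$, not $|a|=k_t$). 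A separate ``synchronization'' argument (Proposition~\ref{prop:small-s}) is needed to show $s$ is small, and this step forces the specific choice $e^{-t}=\rho$ so that the noise-stability hypothesis $\E 1_A P_t 1_{A'}\ge \E 1_B P_t 1_{B'}-\delta$ is available. In particular $t$ is not a free parameter to be optimized as in your step (i).

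Finally, your attribution of the factor $\tfrac14$ to ``repeated Cauchy--Schwarz against the weights'' in step (ii) is off: tracking the proof, step (ii) produces the exponent $\eta=\frac{(1-\rho^2)(1-\rho)}{1-\rho+3\rho^2+\rho^3}$ directly, the synchronization step takes a square root (giving $\eta/2$), and the geometric time-reversal takes another (giving $\eta/4$).
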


We should mention that a more careful tracking of constants in our
proof would improve the exponent of $\delta$ slightly. However,
this improvement would not bring the exponent above $\frac 14$ and it would not
prevent the exponent from approaching zero as $\rho \to 1$.

Although Theorem~\ref{thm:2-funcs-robustness}
is stated only for $0 < \rho < 1$, the same result
for $-1 < \rho < 0$ follows from certain symmetries. Indeed, one can easily
check from the definition of $J$ that
$J(x, y; \rho) = x - J(x, 1-y; -\rho)$. Taking expectations,
\begin{align*}
  \E_\rho J(f(X), g(Y); \rho)
  &= \E f - \E_\rho J(f(X), 1-g(Y); -\rho) \\
  &= \E f - \E_{-\rho} J(f(X), 1-g(-Y); -\rho).
\end{align*}

Now, suppose that $-1 < \rho < 0$ and that $f, g$ almost attain
equality in Theorem~\ref{thm:functional-borell}:
\[
  \E_\rho J(f(X), g(Y); \rho) \le J(\E f, \E g; \rho) + \delta.
\]
Setting $\tilde g(y) = 1 - g(-y)$, this implies that
\[
  \E_{-\rho} J(f(X), \tilde g(Y); -\rho) \ge J(\E f, \E \tilde g; -\rho) - \delta.
\]
Since $0 < -\rho < 1$, we can apply
Theorem~\ref{thm:2-funcs-robustness} to $f$ and $\tilde g$ to conclude
that $f$ and $\tilde g$ are close to the equality cases of Theorem~\ref{thm:equality},
and it follows that
$f$ and $g$ are also close to one of these equality cases.
Therefore, we will concentrate for the rest of this article on the
case $0 < \rho < 1$.

\subsection{Optimal dependence on $\rho$ in the case $f = g$}

The dependence on $\rho$ in Theorem~\ref{thm:2-funcs-robustness}
is particularly interesting as $\rho \to 1$, since it is in that limit
that Borell's inequality recovers the Gaussian isoperimetric inequality.
As it is stated, however, Theorem~\ref{thm:2-funcs-robustness} does
not recover a robust version of the Gaussian isoperimetric inequality
because of its poor dependence on $\rho$ as $\rho \to 1$. In particular,
as $\rho \to 1$, the constant $C(\rho, \epsilon)$ grows to infinity,
and the exponent of $\delta$ tends to zero.

It turns out that this poor dependence on $\rho$ is necessary in some sense.
For example, let 
\begin{align*}
f(x) &= 1_{\{x_1 < 0\}} \\
g(x) &= 1_{\{x_1 < 2, x_2 < 0 \text{ or } x_1 < 1, x_2 \ge 0\}}.
\end{align*}
Then
\[
\Pr_\rho(f(X) = 1, g(X) = 0) \le \Pr_\rho(X_1 < 0, Y_1 \ge 1)
\le \exp\Big(\frac{1}{2(1-\rho^2)}\Big),
\]
which tends to zero very quickly as $\rho \to 1$.
In particular, this means that as $\rho \to 1$, $\delta(f, g)$
tends to zero exponentially fast even though $g$ is a constant distance
away from a half-space. Thus, the constant $C(\rho, \epsilon)$ must
blow up as $\rho \to 1$. Similarly, if we redefine $g$ as
\[
 g(x) = 1_{\{x_1 \le 1+O(\delta), x_2 < 0 \text{ or } x_1 < 1 - O(\delta), x_2 \ge 0\}}
\]
then we see that the exponent of $\delta$ in Theorem~\ref{thm:2-funcs-robustness}
must tend to zero as $\rho \to 1$.

We can, however, avoid examples like the above if we restrict
to the case $f = g$. In this case, it turns out that
$\delta(f, f)$ grows only like $(1-\rho)^{-1/2}$ as $\rho \to 1$,
which is exactly the right rate for recovering the Gaussian
isoperimetric inequality.

\begin{theorem}\label{thm:1-func-robustness}
  For every $\epsilon > 0$, there is a $\rho_0 < 1$
  and a $C(\epsilon)$
  such that for any $\rho_0 < \rho < 1$
  and any $f: \R^n \to [0, 1]$ with $\E f = 1/2$,
  there exists $a \in \R^n$ such that
  \[
    \E |f(X) - \Phi(\inr{a}{X})| \le C(\epsilon)
    \Big(\frac{\delta(f, f)}{\sqrt{1-\rho}}\Big)^{\frac{1}{4}-\epsilon}.
  \]
\end{theorem}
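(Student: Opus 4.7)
The plan is to revisit the proof of Theorem~\ref{thm:2-funcs-robustness} in the symmetric case $f = g$ with $\E f = 1/2$, tracking the dependence on $\rho$ as $\rho \to 1$ very carefully. The $\rho$-dependent exponent $\tfrac14\tfrac{1-\rho-\rho^2+\rho^3}{1-\rho+3\rho^2+\rho^3}$ appearing in Theorem~\ref{thm:2-funcs-robustness} vanishes at $\rho = 1$, while Theorem~\ref{thm:1-func-robustness} achieves the much better $\tfrac14 - \epsilon$; the entire gain should come from symmetry. In the asymmetric case, two things degenerate as $\rho \to 1$: the freedom to choose distinct offsets $b,d$ for $f$ and $g$, and the diagonal singularity of $J(\cdot,\cdot;\rho)$. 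The first issue is absent when $f = g$, and the constraint $\E f = 1/2$ lets us take $b = 0$. The second is intrinsic and is precisely what the factor $\sqrt{1-\rho}$ in the denominator is normalizing away.

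The key quantitative step is an asymptotic expansion of $\delta(f,f)$ near $\rho = 1$. Using the identity $\partial_s J(x,y;s) = \phi_s(\Phi^{-1}(x), \Phi^{-1}(y))$, where $\phi_s$ is the bivariate Gaussian density with correlation $s$, together with the boundary value $J(1/2,1/2;1) = 1/2 = \E J(f(X),f(X);1)$, one writes
\[
 \delta(f,f) \;=\; \int_{\rho}^{1}\bigl[\E\,\phi_s\bigl(\Phi^{-1}(f(X)),\Phi^{-1}(f(Y))\bigr) - \phi_s(0,0)\bigr]\,ds .
\]
Zooming in on the diagonal via $V - U = \Phi^{-1}(f(Y)) - \Phi^{-1}(f(X)) = O(\sqrt{1-s})$ (since $Y - X = O(\sqrt{1-s})$ in distribution), the prefactor $1/\sqrt{1-s^2}$ produces a $(1-s)^{-1/2}$-singularity while the bracketed integrand, after a Taylor expansion, leaves a $\rho$-independent quadratic functional of $f$. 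I expect to obtain
\[
 \delta(f,f) \;\geq\; c\sqrt{1-\rho}\;\mathcal{U}(f) \;+\; O\bigl((1-\rho)^{3/2}\bigr),
\]
where $\mathcal{U}(f) \geq 0$ is a Gaussian Fisher-information-type functional (essentially the deficit in Bobkov's functional inequality) which vanishes iff $f(x) = \Phi(\inr{a}{x})$ for some $a$.

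The final step is a $\rho$-independent robustness statement: $\mathcal{U}(f) \leq \eta$ forces $f$ to be $L^1$-close to some $\Phi(\inr{a}{X})$ at the rate $\eta^{\frac14 - \epsilon}$. This should follow by rerunning the interpolation/semigroup argument used to prove Theorem~\ref{thm:2-funcs-robustness}, but with the $\rho = 1$ limiting quantities and no offset between the two functions. Combining this with the expansion above gives exactly the bound stated in Theorem~\ref{thm:1-func-robustness}, with $\rho_0(\epsilon) < 1$ and $C(\epsilon)$ independent of the dimension. The main obstacle is the first step: making the $\sqrt{1-\rho}$ expansion rigorous and uniform over $f$. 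The Mehler integrand becomes unbounded where $f$ approaches $0$ or $1$ (so that $\Phi^{-1}(f) \to \pm\infty$), and the Taylor remainder must be controlled against these tails; moreover, the lower bound for $\mathcal{U}(f)$ arising from the Taylor expansion must be positive in a way that survives the error terms. Once this expansion is in hand, the rest of the argument is a direct specialization of the robustness machinery already developed for Theorem~\ref{thm:2-funcs-robustness}.
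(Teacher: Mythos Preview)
Your approach is genuinely different from the paper's, and as written it has a gap that is more serious than the Taylor-remainder issue you flag.

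The paper does \emph{not} expand $\delta(f,f)$ around $\rho=1$ or extract a Bobkov-type limiting functional. Instead it uses the Kane / Kindler--O'Donnell subadditivity trick (Proposition~\ref{prop:change-rho}): for $\P(A)=1/2$ and $\theta=\cos(k\cos^{-1}\rho)$ with $k\in\N$, the deficit at correlation $\theta$ is at most $k$ times the deficit at $\rho$. Given $\rho$ close to $1$ one chooses $k\sim 1/\sqrt{1-\rho}$ so that $\theta$ lands in a fixed compact interval bounded away from $0$ and $1$; this costs exactly the factor $\delta/\sqrt{1-\rho}$ in the statement. One then applies a sharpened single-function analogue of Proposition~\ref{prop:2-funcs-large-t} (namely Proposition~\ref{prop:1-func-large-t}, whose improved $\rho$-dependence comes from Lemma~\ref{lem:1-func-tail-bound} and Proposition~\ref{prop:higher-moments}) at this fixed moderate $\theta$, where the exponent is $1/(1+8k_t^2)-\alpha\approx 1$, and finishes with the same time-reversal as before. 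No limiting functional is ever needed.

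Your plan, by contrast, reduces Theorem~\ref{thm:1-func-robustness} to a dimension-free, polynomial-rate robustness statement for the limiting functional $\mathcal U(f)$, i.e.\ essentially a robust Bobkov/isoperimetric inequality. But in this paper that statement is Corollary~\ref{cor:iso}, and it is \emph{derived from} Theorem~\ref{thm:1-func-robustness}, not used to prove it. The only prior dimension-free result (from \cite{MosselNeeman:12}) gives logarithmic rather than polynomial rates, so invoking it would not yield the exponent $\tfrac14-\epsilon$. Calling this step ``a direct specialization of the robustness machinery already developed for Theorem~\ref{thm:2-funcs-robustness}'' understates what is required: the exponent in Theorem~\ref{thm:2-funcs-robustness} degenerates to $0$ as $\rho\to 1$, so its $\rho=1$ specialization is vacuous; you would have to redevelop the entire semigroup/time-reversal argument for the Bobkov functional from scratch, with the new Proposition~\ref{prop:time-rev} in place of the spectral argument of \cite{MosselNeeman:12}. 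That may well be feasible, but it is a parallel theorem of comparable difficulty, not a lemma you already have. The subadditivity reduction sidesteps all of this: it converts the hard regime $\rho\to 1$ into the easy regime $\theta$ bounded, at the price of the natural scaling factor.
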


The requirement $\E f = 1/2$ is there for technical reasons, and we do not
believe that it is necessary (see Conjecture~\ref{conj:monotonicity}).

By applying Ledoux's result~\cite{Ledoux:96} connecting Borell's inequality
with the Gaussian isoperimetric inequality, Theorem~\ref{thm:1-func-robustness}
has the following corollary (for the definition of Gaussian surface area,
see~\cite{MosselNeeman:12}):

\begin{corollary}\label{cor:iso}
 For every $\epsilon > 0$, there is a $C(\epsilon) < \infty$ such that
 for every set $A \subset \R^n$ such that $\P(A) = 1/2$
 and $A$ has Gaussian surface area less
 than $\frac{1}{\sqrt{2\pi}} + \delta$, there is a half-space $B$
 such that
 \[
  \P(A \symdiff B) \le C(\epsilon) \delta^{1/4 - \epsilon}.
 \]
\end{corollary}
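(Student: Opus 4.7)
The plan is to combine Theorem~\ref{thm:1-func-robustness} with Ledoux's semigroup-based connection~\cite{Ledoux:96} between Borell's inequality and the Gaussian isoperimetric inequality, and then translate the resulting $L^1$ closeness of $1_A$ to a half-space characteristic function into a symmetric-difference closeness via a level-set argument.

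First I would convert the GSA hypothesis into a bound on the noise-stability deficit $\delta(1_A,1_A)$. A quantified version of Ledoux's semigroup argument should yield, for any set $A$ and any $\rho$ sufficiently close to $1$,
\[
  \P(A) - \Stab_\rho(A) \;\le\; \frac{\arccos(\rho)}{\sqrt{2\pi}}\,\mathrm{GSA}(A),
\]
with equality when $A$ is a half-space of measure $1/2$ (both sides then equal $\arccos(\rho)/(2\pi)$). Subtracting, and using $\P(A)=1/2$, gives
\[
  \delta(1_A,1_A) = J(\tfrac{1}{2},\tfrac{1}{2};\rho) - \Stab_\rho(A)
  \;\le\; \frac{\arccos(\rho)}{\sqrt{2\pi}}\Bigl(\mathrm{GSA}(A)-\tfrac{1}{\sqrt{2\pi}}\Bigr)
  \;\le\; K\sqrt{1-\rho}\,\delta
\]
for an absolute constant $K$.

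Next, I would apply Theorem~\ref{thm:1-func-robustness} to $f=1_A$ (which has $\E f = 1/2$) at a fixed $\rho$ in the intersection of the admissible ranges, producing $a\in\R^n$ with
\[
  \E|1_A(X)-\Phi(\inr{a}{X})|
  \;\le\; C(\epsilon)\!\left(\frac{\delta(1_A,1_A)}{\sqrt{1-\rho}}\right)^{1/4-\epsilon}
  \;\le\; C'(\epsilon)\,\delta^{1/4-\epsilon},
\]
with the $\sqrt{1-\rho}$ in the denominator precisely cancelling the $\sqrt{1-\rho}$ from Step~1. To finish, I would convert this $L^1$ estimate into a symmetric-difference estimate via the layer-cake identity
\[
  \E|1_A(X)-h(X)| = \int_0^1 \P\bigl(A\symdiff\{h>t\}\bigr)\,dt,
\]
applied with $h(x)=\Phi(\inr{a}{x})$. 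Because the superlevel sets of $h$ are half-spaces $\{x:\inr{a}{x}>\Phi^{-1}(t)\}$, the mean-value principle supplies a threshold $t^\ast$ for which $B=\{x:\inr{a}{x}>\Phi^{-1}(t^\ast)\}$ satisfies $\P(A\symdiff B)\le C'(\epsilon)\delta^{1/4-\epsilon}$, as required.

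The main obstacle is Step~1. The limiting identification of $\mathrm{GSA}(A)$ from Borell's inequality is the content of~\cite{Ledoux:96}, but what is needed here is the corresponding non-asymptotic bound, with a constant tight enough that, after cancellation against the half-space value, the GSA \emph{deficit} (rather than $\mathrm{GSA}(A)$ itself) appears as the leading term. Running Ledoux's semigroup computation with sufficiently sharp error control is the crux; once this step is in place, Steps 2 and 3 are routine.
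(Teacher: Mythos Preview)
Your proposal is correct and follows the route the paper indicates (the paper gives no detailed proof, only the sentence ``By applying Ledoux's result connecting Borell's inequality with the Gaussian isoperimetric inequality, Theorem~\ref{thm:1-func-robustness} has the following corollary''). Your Step~1 is exactly Ledoux's computation, and you are right that the point is the \emph{sharp} non-asymptotic form $\P_\rho(X\in A,\,Y\notin A)\le(\arccos\rho/\sqrt{2\pi})\,\mathrm{GSA}(A)$, with equality for half-spaces of measure $1/2$; this is what makes the surface-area \emph{deficit}, not the surface area itself, appear after subtraction. Your Step~3 is a clean way to pass from $\Phi(\inr{a}{\cdot})$ to an actual half-space, though note that the proof of Theorem~\ref{thm:1-func-robustness} in the paper already establishes $\P(A\symdiff B)\le C(\epsilon)(\delta/\sqrt{1-\rho})^{1/4-\epsilon}$ directly for indicator functions before extending to general $f$, so one may cite that version and skip the layer-cake step.
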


This should be compared with the work of Cianchi et al.~\cite{CFMP:11}, who
gave the best possible dependence on $\delta$, but suffered some unspecified
dependence on $n$:

\begin{theorem}\label{thm:italian}
 For every $n$ and every $a \in (0, 1)$, there is a constant $C(n, a)$
 such that for every set $A \subset \R^n$ such that $\P(A) = a$
 and $A$ has Gaussian surface area less than $\phi(\Phi^{-1}(a)) + \delta$,
 there is a half-space $B$ such that
 \[
  \P(A \symdiff B) \le C(n, a) \delta^{1/2}.
 \]
\end{theorem}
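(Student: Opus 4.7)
The plan is to follow a selection-principle argument in the spirit of the one used by Cicalese and Leonardi in the Euclidean setting, adapted to the Gaussian weight. I argue by contradiction: if the theorem fails, then for some fixed $n$ and $a \in (0,1)$ there is a sequence $E_k \subset \R^n$ with $\gamma_n(E_k) = a$, Gaussian perimeter deficit $\delta_k := P_{\gamma}(E_k) - \phi(\Phi^{-1}(a)) \downarrow 0$, yet asymmetry $\alpha(E_k) := \inf_H \gamma_n(E_k \symdiff H) \gg \sqrt{\delta_k}$, where the infimum is taken over half-spaces $H$ of Gaussian measure $a$.

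Next, I replace $\{E_k\}$ by a regularized sequence $\{F_k\}$ obtained as minimizers of a penalized problem such as
\begin{equation*}
  \min\bigl\{ P_{\gamma}(F) + \Lambda_k \bigl| \alpha(F) - \tau_k\bigr| \,:\, \gamma_n(F) = a \bigr\},
\end{equation*}
with $\tau_k$ and $\Lambda_k$ chosen so that $F_k$ inherits the bad-asymmetry property $\alpha(F_k) \gg \sqrt{\delta(F_k)}$ while gaining extra regularity. The standard theory of quasi-minimizers of perimeter, transplanted from the Euclidean to the Gaussian setting, then shows that after translating and rotating, $F_k \to H_\infty$ in $L^1(\gamma_n)$ for some half-space $H_\infty$ of measure $a$, and moreover that for $k$ large $\partial F_k$ is a $C^{1,\alpha}$ graph $\{x_1 = \Phi^{-1}(a) + u_k(x')\}$ over $\partial H_\infty$ with $\|u_k\|_{C^{1,\alpha}} \to 0$.

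With $F_k$ now a small normal graph, I expand both the deficit and the asymmetry to second order in $u_k$. A direct computation (Taylor-expanding the weighted area functional) gives
\begin{equation*}
  \delta(F_k) \;=\; \tfrac{1}{2}\int_{\R^{n-1}} |\grad u_k|^2 \, d\gamma_{n-1}
  \;-\; \tfrac{c}{2}\int_{\R^{n-1}} u_k^2 \, d\gamma_{n-1} \;+\; o(\|u_k\|_{H^1}^2),
\end{equation*}
with $c = \Phi^{-1}(a)\,\phi(\Phi^{-1}(a))$. After imposing the volume constraint $\int u_k\, d\gamma_{n-1} = 0$ and translating to kill the first Hermite mode of $u_k$, the Poincar\'e (spectral-gap) inequality for the Ornstein--Uhlenbeck operator on $\R^{n-1}$ yields $\int |\grad u_k|^2 \, d\gamma_{n-1} \ge 2\int u_k^2\, d\gamma_{n-1}$, which upgrades the expansion above to $\delta(F_k) \gtrsim \int u_k^2\, d\gamma_{n-1} \gtrsim \alpha(F_k)^2$. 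This contradicts $\alpha(F_k) \gg \sqrt{\delta(F_k)}$ and proves the claim with exponent $1/2$.

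The main obstacle is the second step: establishing $C^{1,\alpha}$ regularity for the penalized minimizers $F_k$ uniformly in $k$, together with the compactness needed to guarantee convergence to a half-space rather than some other stationary surface. This requires adapting Almgren--De~Giorgi style $\epsilon$-regularity from the classical sets-of-finite-perimeter theory to the Gaussian weight $e^{-|x|^2/2}$, and it is precisely here that the dimensional dependence of $C(n,a)$ enters, through the constants in the $\epsilon$-regularity theorem and the Sobolev embeddings used to upgrade $L^1$-convergence to $C^1$-convergence. The Poincar\'e step at the end is sharp and explains why $\delta^{1/2}$ (and not a worse power) is attainable by this method.
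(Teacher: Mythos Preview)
The paper does not contain a proof of this theorem. Theorem~\ref{thm:italian} is stated purely for comparison with Corollary~\ref{cor:iso}; it is attributed to Cianchi, Fusco, Maggi and Pratelli and cited from their paper. The surrounding text makes this explicit: ``This should be compared with the work of Cianchi et al.\ [CFMP], who gave the best possible dependence on $\delta$, but suffered some unspecified dependence on $n$.'' So there is no ``paper's own proof'' to compare your attempt against.

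That said, your outline is a recognizable and essentially sound strategy for this kind of quantitative isoperimetric result: the selection principle of Cicalese--Leonardi, transplanted to the Gaussian weight. The three blocks you identify (penalized minimization to gain regularity, $\epsilon$-regularity to write $\partial F_k$ as a small $C^{1,\alpha}$ graph over a hyperplane, and a second-variation / spectral-gap computation to conclude $\delta \gtrsim \alpha^2$) are the right ones, and you are correct that the dimension enters through the regularity theory. One caution on the Taylor expansion you wrote: the sign and constant in the zeroth-order term need care, since the second variation of Gaussian perimeter at a half-space $\{x_1 \le b\}$ is $\int (|\nabla u|^2 - u^2)\, d\gamma_{n-1}$, independent of $b$; the volume constraint kills the constant mode and the translation kills the degree-one modes, after which the Hermite spectral gap gives exactly $\int |\nabla u|^2 \ge 2 \int u^2$ and hence $\delta \gtrsim \|u\|_2^2$. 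Your displayed formula with the $b$-dependent coefficient $c = \Phi^{-1}(a)\,\phi(\Phi^{-1}(a))$ is not quite right, but the conclusion survives once the expansion is done correctly.
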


Note that Theorem~\ref{thm:italian} is stronger than Corollary~\ref{cor:iso}
in two senses, but weaker in one. Theorem~\ref{thm:italian}
is stronger since it applies to sets of all volumes and because it has
a better dependence on $\delta$ (in fact, Cianchi et al.\ show that
$\delta^{1/2}$ is the best possible dependence on $\delta$). However,
Corollary~\ref{cor:iso} is stronger in the sense that it -- like the
rest of our robustness results -- has no dependence on the dimension.
For the applications we have in mind, this dimension independence
is more important than having optimal rates. Nevertheless, we conjecture
that it is possible to have both at the same time:

\begin{conjecture}
There are constants $0 < c, C < \infty$ such that for every $A \subset \R^n$
with Gaussian surface area less than $\phi(\Phi^{-1}(\P(A))) + \delta$,
there is a half-space $B$ such that
\[
 \P(A \symdiff B) \le C \P(A)^c \delta^{1/2}.
\]
\end{conjecture}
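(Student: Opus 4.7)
The conjecture seeks to combine the dimension independence of Corollary~\ref{cor:iso} with the optimal $\delta^{1/2}$ exponent of Theorem~\ref{thm:italian}. My plan is to sharpen Theorem~\ref{thm:1-func-robustness} so that the exponent $\frac{1}{4}-\epsilon$ becomes $\frac{1}{2}$, to remove the normalization $\E f = 1/2$, and then to apply Ledoux's identity realizing Gaussian surface area as a $\rho \to 1$ limit of noise stability, exactly as in the derivation of Corollary~\ref{cor:iso}. Dimension independence is then automatic along this route, and the polynomial factor $\P(A)^c$ should fall out of the argument.

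The central task is the improvement of the exponent. The proof of Theorem~\ref{thm:functional-borell} is based on the monotonicity of $t \mapsto \E J(P_t f(X), P_t g(Y); e^{-2t})$ along the Ornstein--Uhlenbeck flow, and $\delta$ equals the integral in $t$ of a nonnegative quantity involving the Hessian of $J$ together with $\nabla P_t f$ and $\nabla P_t g$. The $\delta^{1/4-\epsilon}$ rate in Theorem~\ref{thm:1-func-robustness} presumably arises from a Cauchy--Schwarz step that converts pointwise $L^2$ information on $\nabla P_t f$ into an $L^1$ bound on $f - \Phi(\inr{a}{\cdot - b})$. I would replace that step by a direct second-order expansion of the flow functional around the equality manifold $\{f = \Phi(\inr{a}{\cdot - b})\}$: the transverse strict concavity implicit in Theorem~\ref{thm:equality} should furnish a quadratic lower bound on the integrand and hence, after integrating in $t$, yield $\delta^{1/2}$. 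The factor $\P(A)^c$ should emerge naturally from the fact that the transverse Hessian of $J(x,y;\rho)$ vanishes like a power of $x(1-x)y(1-y)$ at the corners of $[0,1]^2$, which is also the technical obstruction to removing the $\E f = 1/2$ hypothesis; once the quantitative form of this degeneracy is tracked, the argument should close for general $\E f$.

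The main obstacle is precisely this corner degeneracy. The Hessian of $J$ vanishes at the vertices of $[0,1]^2$, which are exactly the values taken by $1_A$, so the proposed quadratic lower bound on the integrand blows up as $f$ approaches an indicator. The semigroup smooths $P_t f$ into the interior of $[0,1]$, but only by a margin depending on $t$, and in the isoperimetric limit $\rho \to 1$ the relevant time scale shrinks like $\sqrt{1-\rho}$. Any second-order lower bound therefore has to be uniform down to very small times and for $P_t f$ very close to $\{0,1\}$; this is where both the semigroup argument of the present paper and the Ehrhard-symmetrization proof of Theorem~\ref{thm:italian} lose sharpness, and obtaining a dimension-free replacement is what separates the conjecture from the known theorems. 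A genuinely new ingredient --- perhaps a non-quadratic convexity inequality for $J$ tailored to its corner behavior, or a stability result for Ehrhard symmetrization that is uniform in dimension --- seems to be required before the plan above can be made to work.
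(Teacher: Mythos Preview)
The statement you were asked to prove is labelled a \emph{conjecture} in the paper and is not proved there; the authors present it explicitly as an open problem that would combine the dimension independence of Corollary~\ref{cor:iso} with the sharp $\delta^{1/2}$ rate of Theorem~\ref{thm:italian}. Your proposal is, correspondingly, not a proof: you sketch a strategy, locate its principal obstruction (the degeneracy of the Hessian of $J$ at the corners of $[0,1]^2$, precisely where indicator functions live), and end by saying that a genuinely new ingredient is needed. That diagnosis agrees with the paper's own position, so there is no proof in the paper to compare your attempt against.

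Two corrections to your description of the paper's machinery. First, the monotone quantity is $R_t = \E_\rho J(P_t f(X), P_t g(Y); \rho)$ with $\rho$ held \emph{fixed}; the correlation parameter does not evolve as $e^{-2t}$ along the flow (see~\eqref{eq:R_t-def} and Lemma~\ref{lem:diff-R_t}). Second, the exponent $\tfrac14 - \epsilon$ in Theorem~\ref{thm:1-func-robustness} is not produced by a single Cauchy--Schwarz step. It is the product of two separate square-root losses: one in passing from the bound on $\diff{R_t}{t}$ to an $L^2$ bound on $f_t - \Phi(\inr{a}{\cdot}-b)$ via the Poincar\'e-type Lemma~\ref{lem:2-funcs-poincare} and Proposition~\ref{prop:1-func-large-t}, and a second, independent one in the time-reversal step Proposition~\ref{prop:time-rev}. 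The authors remark after Theorem~\ref{thm:2-funcs-robustness} that their method cannot push the exponent above $\tfrac14$. Hence any route to $\delta^{1/2}$ would have to sharpen \emph{both} the semigroup estimate and the time-reversal argument of Section~\ref{sec:time-rev}, not only handle the corner degeneracy you identify.
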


\subsection{On highly correlated functions}

Let us mention one more corollary of Theorem~\ref{thm:1-func-robustness}.
We have used
$\E_\rho J(f(X), f(Y))$ as a functional generalization of
$\Pr_\rho(X \in A, Y \in A)$. However, $\E_\rho f(X) f(Y)$ is
another commonly used
functional generalization of $\P_\rho(X \in A, Y \in A)$ which appeared, for
example, in~\cite{Ledoux:96}. Since $xy \le J(x, y)$ for $0 < \rho < 1$,
we see immediately that Theorem~\ref{thm:functional-borell}
holds when the left hand side is replaced by $\E_\rho f(X) f(Y)$.
The equality case, however, turns out to be different:
whereas equality in Theorem~\ref{thm:functional-borell}
holds for $f(x) = \Phi(\inr{a}{x-b})$,
there is equality in
\begin{equation}\label{eq:correlation-ineq}
 \E_\rho f(X) f(Y) \le J(\E f, \E f; \rho)
\end{equation}
only when $f$ is the indicator of a half-space.
Moreover, a robustness result for~\eqref{eq:correlation-ineq}
follows fairly easily from Theorems~\ref{thm:2-funcs-robustness}
and~\ref{thm:1-func-robustness}.

\begin{corollary}\label{cor:1-func-robustness}
For any $0 < \rho < 1$, there is a constant $C(\rho) < \infty$
such that if $f: \R^n \to [0, 1]$
satisfies
$\E f = 1/2$ and
\[
 \E f(X) f(Y) \ge \frac{1}{4} + \frac{1}{2\pi} \arcsin(\rho) - \delta
\]
then there is a half-space $B$ such that
\[
 \E |f(X) - 1_B(X)| \le C(\rho) \delta^c,
\]
where $c > 0$ is a universal constant.
\end{corollary}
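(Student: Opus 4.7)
The plan is to combine the already-established robustness results with the strict gap $J(x,y;\rho) > xy$ for $x,y \in (0,1)$: I will use Theorem~\ref{thm:2-funcs-robustness} (or Theorem~\ref{thm:1-func-robustness} when $\rho$ is close to $1$) to approximate $f$ in $L^1$ by a function of the form $\tilde f(x) = \Phi(\inr{a}{x-b})$, and then exploit the remaining slack from $xy \le J(x,y;\rho)$ to force $\|a\|$ to be so large that $\tilde f$ itself is close to an indicator.

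Concretely, since $xy \le J(x,y;\rho)$ pointwise on $[0,1]^2$, the chain
\[
\E_\rho f(X) f(Y) \le \E_\rho J(f(X), f(Y); \rho) \le J(\tfrac12, \tfrac12; \rho) = \tfrac14 + \tfrac{\arcsin \rho}{2\pi}
\]
(the second step being Theorem~\ref{thm:functional-borell}) is tight up to $\delta$ at both steps, giving both $\delta(f,f) \le \delta$ and $\E_\rho[J(f(X), f(Y); \rho) - f(X) f(Y)] \le \delta$. Feeding the first of these into Theorem~\ref{thm:2-funcs-robustness} with $g=f$ produces $a,b \in \R^n$ with $\E|f(X) - \tilde f(X)| \le C(\rho) \delta^c$ for $\tilde f(x) = \Phi(\inr{a}{x-b})$. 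For $\rho$ near $1$, where the exponent of Theorem~\ref{thm:2-funcs-robustness} degenerates, I instead apply Theorem~\ref{thm:1-func-robustness} (whose exponent $\tfrac14 - \epsilon$ is uniform in $\rho$), so that $c$ can be taken independent of $\rho$.

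I then evaluate the second slack explicitly for $\tilde f$. Since $J$ is $1$-Lipschitz in each argument (its partials are conditional probabilities bounded by $1$), $\E_\rho[J(\tilde f, \tilde f; \rho) - \tilde f \tilde f] \le \delta + O(\|f - \tilde f\|_1)$. On one hand, $\tilde f$ is exactly an equality case of Theorem~\ref{thm:functional-borell}, so $\E_\rho J(\tilde f, \tilde f; \rho) = J(\E \tilde f, \E \tilde f; \rho)$: this is verified by writing $\Phi(U) = \P(Z \le U \mid U)$ for an auxiliary standard Gaussian $Z$ and computing that the two differences $X_1 - U$ and $Y_1 - V$ have correlation $\rho$. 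On the other hand, the same conditioning gives $\E \tilde f(X) \tilde f(Y) = J(\E \tilde f, \E \tilde f; \tilde \rho)$ with $\tilde \rho = \rho \|a\|^2/(1+\|a\|^2)$. Since $\E \tilde f$ is within $O(\delta^c)$ of $\tfrac12$, the gap between these two quantities is at least
\[
\tfrac{1}{2\pi}(\arcsin \rho - \arcsin \tilde \rho) - O(\delta^c) \ge \tfrac{1}{2\pi}(\rho - \tilde \rho) - O(\delta^c) = \tfrac{\rho}{2\pi(1 + \|a\|^2)} - O(\delta^c),
\]
from which I conclude $\|a\|^2 \ge c'(\rho) \delta^{-c}$.

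Finally, letting $B = \{x : \inr{a}{x-b} \ge 0\}$, a direct Gaussian integration gives $\E|\tilde f(X) - 1_B(X)| = O(1/\|a\|) = O(\delta^{c/2})$ (the offset $\inr{a}{b}$ is negligible because $\E \tilde f \approx \tfrac12$ forces it to be small), and the triangle inequality $\E|f - 1_B| \le \E|f - \tilde f| + \E|\tilde f - 1_B|$ finishes the proof. The main obstacle in carrying out this plan is ensuring that the resulting exponent is a universal constant rather than $\rho$-dependent; this is precisely what forces the case split between Theorems~\ref{thm:2-funcs-robustness} and~\ref{thm:1-func-robustness}, and the verification that both regimes yield a common lower bound on $c$.
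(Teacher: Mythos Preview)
Your proof is correct and follows essentially the same route as the paper's: apply Theorem~\ref{thm:2-funcs-robustness} (or Theorem~\ref{thm:1-func-robustness} for $\rho$ near $1$, so that the exponent $c$ is universal) to get $\tilde f=\Phi(\inr{a}{\cdot-b})$ close to $f$, transfer the product bound to $\tilde f$, and then exploit the explicit formula $\E_\rho \tilde f(X)\tilde f(Y)=J(\E\tilde f,\E\tilde f;\tilde\rho)$ with $\tilde\rho=\rho\|a\|^2/(1+\|a\|^2)$ to force $\|a\|$ large, hence $\tilde f$ close to an indicator. The paper does the identical computation in the parametrization $|a|=k_t=(e^{2t}-1)^{-1/2}$, writing $\tilde f=P_t 1_A$ so that $\tilde\rho=e^{-2t}\rho$ and the key inequality becomes $1-e^{-2t}\le C(\rho)\delta^c$; your treatment of the offset $b$ is in fact slightly more careful than the paper's, which tacitly sets $b=0$.
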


\subsection{Discrete applications}
Corollary~\ref{cor:1-func-robustness} implies a robust
version of the ``majority is stablest'' theorem~\cite{MoOdOl:10},
which concerns functions of low \emph{influence}
and high noise stability;
for a function $f: \{-1, 1\}^n \to \{-1, 1\}$, we define the
influence of the $i$th coordinate by
\[
 \Inf_i(f) = \P(f(x_1, \dots, x_n) \neq f(x_1, \dots, x_{i-1},
-x_i, x_{i+1}, \dots, x_n))
\]
and the noise stability of $f$ by
\[
 \Stab_\rho(f) = \E_\rho f(\xi) f(\sigma)
\]
where $(\xi,\sigma) = ((\xi_1,\ldots,\xi_n),(\sigma_1,\ldots,\sigma_n)) \in \{-1,1\}^n \times \{-1,1\}^n$ 
is chosen so that $(\xi_i,\sigma_i) \in \{-1,1\}^2$ are independent
random variables with $\E\xi_i = \E\sigma_i = 0$ and $\E_\rho \xi_i \sigma_i = \rho$.

The majority is stablest theorem~\cite{MoOdOl:10} informally 
states that low-influence, balanced functions cannot be
essentially more noise-stable than the majority function.
This was first explicitly conjectured by Khot, Kindler, Mossel, and
O'Don\-nell~\cite{KKMO:07} in a paper studying the hardness of approximation of Max-Cut. 
It was used to show that
approximating the maximum cut in a graph to within a factor
of about $0.87856$ is unique-games hard.
This result is optimal, since the famous 
efficient algorithm of Goemans and
Williamson~\cite{GoemansWilliamson:95} is guaranteed to find
a cut that is within a $0.87856$ factor of the maximum cut.
A special case of the majority is stablest theorem was conjectured
earlier by Kalai~\cite{Kalai:02} in the context of his quantitative version
of Arrow's theorem.

Combining our Gaussian results with the original proof from~\cite{MoOdOl:10},
we obtain a robust version of the majority is stablest theorem:
\begin{theorem}\label{thm:robust-mist}
For every $\delta > 0$, there is a $\tau > 0$ such that the following holds:
suppose that $f: \{-1, 1\}^n \to [0, 1]$ is a function with $\Inf_i(f) \le \tau$ for every $i$.
Then for every $0 < \rho < 1$,
\begin{equation}\label{eq:mist}
\Stab_\rho(f) \le J(\E f, \E f; \rho) + \delta.
\end{equation}
If, moreover, there is some $0 < \rho < 1$ such that
\begin{equation}\label{eq:robust-mist}
 \Stab_\rho(f) \ge J(\E f, \E f; \rho) - \delta
\end{equation}
then there exist $a, b \in \R^n$ such that
\begin{align*}
\E |f(\xi) - 1_{\{\inr{a}{\xi-b} \ge 0\}}| \le C (\rho)\delta^c,
\end{align*}
where $c, C > 0$ are universal constants.
\end{theorem}

If we set $a_n = \frac{1}{\sqrt n} (1, \dots, 1)$ and
$b_n = \Phi^{-1}(\E f) a_n$, then the central limit theorem implies
that $\E 1_{\{\inr{a_n}{\xi-b_n} \ge 0\}} \to \E f$
and $\Stab_\rho(1_{\{\inr{a_n}{\xi-b_n} \ge 0\}}) \to J(\E f, \E f; \rho)$.
In the case $\E f = \frac 12$ and $b_n = 0$,~\eqref{eq:mist} says, therefore,
that no low-influence function can be much more noise
stable than the simple majority function -- this is the content of the majority
is stablest theorem from~\cite{MoOdOl:10}.
Our contribution is~\eqref{eq:robust-mist}, which says that the only low-influence functions
which come close to this bound are close to weighted majority functions.

We remark that Theorem~\ref{thm:robust-mist} is not the most general possible
theorem that we can prove.
In particular, we could state a two-function version of
Theorem~\ref{thm:robust-mist} or
a version that uses the functional $\E_\rho J(f(\xi), f(\sigma); \rho)$ in place of $\Stab_\rho(f)$.
All of these variations, however, are proved in essentially the same way, namely by
combining the ideas from~\cite{MoOdOl:10} with the appropriate Gaussian robustness result.
In order to avoid repetition, therefore, we will only state and prove one version.

\subsection{Spherical noise stability and the Max-Cut problem}

The well-known similarity between a Gaussian vector
and a uniformly random vector on a high-dimensional sphere suggests that there might be a spherical analogue of our Gaussian noise sensitivity result. The correlation structure on the sphere that is most useful is
the uniform measure over all pairs of points $(x, y)$ whose inner product
$\inr{x}{y}$ is exactly $\rho$.
Under this model of noise, we can use robust Gaussian noise sensitivity to show, asymptotically in the dimension, robustness for spherical noise
sensitivity. This uses the theory of spherical harmonics and has applications to rounding semidefinite programs
(in particular, the Goemans-William\-son algorithm for Max-Cut). Our proof
uses and generalizes the work of Klartag and Regev~\cite{KlartagRegev:11}, in which a related problem was studied in the context of 
one-way communication complexity. 

Our spherical noise stability result mostly follows from
Theorem~\ref{thm:2-funcs-robustness}, by replacing $X$ and $Y$ by
$X/|X|$ and $Y/|Y|$. When $n$ is large, these renormalized Gaussian
vectors are uniformly distributed on the sphere and their inner product
is tightly concentrated around $\rho$.
The fact that their inner product is not \emph{exactly} $\rho$ causes
some difficulty, particularly because $Q_\rho$ is actually
orthogonal to the joint distribution of two normalized Gaussians.
Working through this difficulty with some properties of
spherical harmonics, we obtain the following spherical analogue of Theorem~\ref{thm:2-funcs-robustness}:

\begin{theorem}\label{thm:sphere}
  Let $0 < \rho < 1$ and 
  write $Q_{\rho}$ for the measure of $(X,Y)$ on the sphere $S^{n-1}$ where the pair
  $(X,Y)$ is uniformly distributed in
  \[
   \{(x, y) \in S^{n-1} \times S^{n-1}: \inr{x}{y} = \rho\}.
  \]
  For measurable $A_1, A_2 \subset S^{n-1}$, define
  \[
    \delta = \delta(A_1, A_2) =
    Q_\rho(X \in B_1, Y \in B_2)
    - Q_\rho(X \in A_1, Y \in A_2),
  \]
  where $B_1$ and $B_2$ are parallel spherical caps with the same volumes
  as $A_1$ and $A_2$ respectively. Define also
  \[
   m(A_1, A_2) = p(1-p) q(1-q)
  \]
  where $p = \Pr(X \in A_1)$ and $q = \Pr(Y \in A_2)$.
  
  For any $A_1, A_2 \subset S^{n-1}$, there exist parallel
  spherical caps $B_1$ and $B_2$ such that
  \begin{align*}
    Q(A_1 \symdiff B_1)
    &\le C(\rho) m^{c(\rho)} \delta_\ast^{\frac{1}{4} \frac{(1 - \rho)(1-\rho^2)}{1 +3\rho}} \\
    Q(A_2 \symdiff B_2)
    &\le C(\rho) m^{c(\rho)} \delta_\ast^{\frac{1}{4} \frac{(1 - \rho)(1-\rho^2)}{1+3\rho}}.
  \end{align*}
  where $\delta_{\ast} = \max(\delta, n^{-1/2} \log n)$. 
\end{theorem}

The case $\rho=0$ of the above theorem is related to work by Klartag and Regev~\cite{KlartagRegev:11}.
In this case one expects that $X$ and $Y$ should behave as independent random variables on $S^{n-1}$ and that therefore 
for {\em all} $A_1, A_2$,  $Q_0(X \in A_1, Y \in A_2)$ should be close to 
$Q(X \in A_1) Q (Y \in A_2)$. Indeed the main technical statement of Klartag and Regev (Theorem 5.2) says that for every two sets,
\[
| Q_0(X \in A_1, Y \in A_2) - Q(X \in A_1) Q(Y \in A_2) | \leq \frac{C}{n}.
\]
In other words the results of Klartag and Regev show that in the case $\rho=0$, a uniform orthogonal pair $(X,Y)$
on the sphere behaves like a pair of independent random variables up to an error
of order $n^{-1}$, while our results show that for $0 < \rho < 1$, 
$(X,Y)$ that are $\rho$ correlated behave like Gaussians with the same correlation. 

That spherical caps minimize the quantity $Q_\rho(X \in A_1, Y \in A_2)$ over all sets
$A_1$ and $A_2$ with some prescribed volumes is originally due to Baernstein
and Taylor~\cite{BaernsteinTaylor:76}, while a similar result for a different noise model
is due to Beckner~\cite{Beckner:92}. Their results do not follow from ours
because of the dependence on $n$ in Theorem~\ref{thm:sphere}, and so one
could ask for a sharper version of Theorem~\ref{thm:sphere} that does imply
these earlier results. One obstacle is that we do not know a proof
of Beckner's inequality that gives control of the deficit.

\subsubsection{Rounding the Goemans-Williamson algorithm}
Let $G = (V, E)$ be a graph and recall that the Max-Cut problem is to
find a set $A \subset V$ such that the number of edges between $A$
and $V \setminus A$ is maximal. It is of course equivalent to look
for a function $f: V \to \{-1, 1\}$ such that
$\sum_{(u,v) \in E} |f(u) - f(v)|^2$ is maximal. Goemans' and Williamson's
breakthrough was to realize that this combinatorial optimization problem
can be efficiently solved if we relax the range $\{-1, 1\}$ to
$S^{n-1}$. Let us say, therefore, that an embedding $f$ of a graph $G = (V, E)$
into the sphere $S^{n-1}$ is \emph{optimal} if
\[\sum_{(u, v) \in E)} |f(u) - f(v)|^2\]
is maximal. An oblivious rounding procedure is a (possibly random) function
$R: S^{n-1} \to \{-1, 1\}$ (we call it ``oblivious'' because it does not
look at the graph $G$). We will then denote by $\Cut(G, R)$ the expected value
of the cut produced by rounding the worst possible optimal
spherical embedding of $G$:
\[
 \Cut(G, R) = \frac{1}{2} \min_f \E \sum_{(u,v) \in E} |R(f(u)) - R(f(v))|,
\]
where the minimum is over all optimal embeddings $f$.
If $\MaxCut$ denotes the maximum cut in $G$, then Goemans and Williamson~\cite{GoemansWilliamson:95}
showed that when $R(x) = \sgn(\inr{X}{x})$ for a standard Gaussian vector $X$, then
for every graph $G$,
\[
 \Cut(G, R) \ge \MaxCut(G) \min_\theta \alpha_\theta,
\]
where $\alpha_\theta = \frac 2\pi \frac{\theta}{1-\cos \theta}$.
In the other direction, Feige and Schechtman~\cite{FeigeSchechtman:02}
showed that for every oblivious rounding scheme $R$ and every $\epsilon > 0$, there is a
graph $G$ such that
\[
 \Cut(G, R) \le \MaxCut(G) \Big(\epsilon + \min_\theta \alpha_\theta \Big).
\]
In other words, no rounding scheme is better than the half-space rounding
scheme. Using Theorem~\ref{thm:2-funcs-robustness}, we can go further:

\begin{theorem}\label{thm:optimal-rounding}
Suppose $R$ is a rounding scheme on $S^{n-1}$ such that
for every graph $G$ with $n$ vertices,
\[
 \Cut(G, R) \ge \MaxCut(G) \Big( \min_\theta \alpha_\theta - \epsilon\Big).
\]
Then there is a hyperplane rounding scheme $\tilde R$ such that
\[
 \E |R(Y) - \tilde R(Y)| \le C \epsilon_\star^c,
\]
where $Y$ is a uniform (independent of $R$ and $\tilde R$) random vector on $S^{n-1}$,
$C$ and $c$ are absolute constants, and $\epsilon_\star = \max\{\epsilon, n^{-1/2} \log n\}$.
\end{theorem}
In other words, any rounding scheme that is almost optimal is essentially
the same as rounding by a random half-space. 

\subsection{Testing half-spaces}
We quickly sketch an application of
Theorems~\ref{thm:2-funcs-robustness} and~\ref{thm:robust-mist}
to testing. Suppose we are given oracle access to a set $A \subset \R^n$ (meaning
that we are not given an explicit representation of the set, but we can query whether
points belong to $A$),
and we want to design an algorithm that (1) will answer ``yes'' with high probability
if $A$ is a half space and (2) will answer ``no'' with high probability if
$\Pr(A \symdiff B) > \eps$ for all half-spaces $B$.

An efficient test for this problem was found in~\cite{MORS:09}. 
We note that Theorem~\ref{thm:1-func-robustness} provides a simpler and
very direct test just by sampling $\epsilon^{-4-\epsilon}$
pairs $(X_i,Y_i)$ and counting the number of times that $X_i \in A$
and the number of times that $1_A(X_i) = 1_A(Y_i)$. By doing so, we obtain accurate
estimates of $\Pr(A)$ and $\Pr(X \in A, Y \in A)$ and so by Theorem~\ref{thm:1-func-robustness},
we can tell whether $A$ is close to a half-space.

By Theorem~\ref{thm:robust-mist}, this algorithm also applies to linear threshold functions
with low influences on the discrete cube (such functions are called regular in~\cite{MORS:09}).
(By the more general arguments in~\cite{MoOdOl:10}, the algorithm also applies to other
discrete
spaces such as half-spaces in biased cubes or cubes of the form $[q]^n$ for some $q \geq 3$.)
Using the arguments of~\cite{MORS:09} it is then possible to extend the testing
algorithm to general linear
threshold functions on the discrete cube.

\subsection{Proof Techniques} 

\subsubsection{Borell's theorem} 
We prove Theorem~\ref{thm:functional-borell} by differentiating
along the Ornstein-Uhlenbeck
semigroup. This technique was used by Bakry and Ledoux~\cite{BakryLedoux:96}
in their proof of the Gaussian isoperimetric inequality
and, more generally, a Gaussian version of the L\'evy-Gromov comparison theorem.
Recall that the Ornstein-Uhlenbeck semigroup can be specified by
defining, for every $t \ge 0$, the operator
\begin{equation}\label{eq:P_t}
  (P_t f)(x) = \int_{\R^n} f(e^{-t} x + \sqrt{1 - e^{-2t}} y)\: d\gamma_n(y).
\end{equation}
Note that $P_t f \to f$ as $t \to 0$ (pointwise, and also in $L^p$),
while $P_t f \to \E f$ as $t \to \infty$.

Let $f_t = P_t f$, $g_t = P_t g$, and consider the quantity
\begin{equation}\label{eq:R_t-def}
  R_t := \E_\rho J(f_t(X), g_t(Y)).
\end{equation}
As $t \to 0$, $R_t$ converges to the right hand side of~\eqref{eq:functional-borell};
as $t \to \infty$, $R_t$ converges to the left hand side of~\eqref{eq:functional-borell}.
We will prove Theorem~\ref{thm:functional-borell}
by showing that $\diff{R_t}{t} \ge 0$ for all $t > 0$.

\subsubsection{The equality case}
The equality case almost comes for free from our proof of Theorem~\ref{thm:functional-borell}.
Indeed, Lemma~\ref{lem:diff-R_t} writes $\diff{R_t}{t}$ as the expectation of
a strictly positive quantity times
\[|(\grad (\Phi^{-1} \circ f_t))(X) - (\grad (\Phi^{-1} \circ g_t))(Y)|,\]
where $|\cdot|$ denotes the Euclidean norm.
Now, if there is equality in Theorem~\ref{thm:functional-borell} then
$\diff{R_t}{t}$ must be zero for all $t$, which implies that the expression
above must be zero almost surely.
This implies that $\grad (\Phi^{-1} \circ f_t)$
and $\grad (\Phi^{-1} \circ g_t)$ are almost surely equal to the same constant,
and therefore $f_t$ and $g_t$ can be written as $\Phi$ composed with a linear function.
We can then infer the same statement for $f$ and $g$ because $P_t$ is
one-to-one.
 
 \subsubsection{Robustness} 
Our approach to robustness begins similarly to the approach in our recent
work~\cite{MosselNeeman:12}. 
If $\delta(f, g)$ is small then $\diff{R_t}{t}$ must also be small for most $t > 0$.
Looking at the expression in Lemma~\ref{lem:diff-R_t} we first concentrate on the main term: 
$| \grad v_t(X) - \grad w_t(Y)|^2$ where $v_t = \Phi^{-1} \circ f_t$
and $w_t = \Phi^{-1} \circ g_t$. Using an analogue of Poincar\'e's inequality,
we argue that if
the expected value of $|\grad v_t(X) - \grad w_t(Y)|^2$ is small then
$v_t$ and $w_t$ are close to linear functions. 
 
Considerable effort goes into controlling the ``secondary terms''
of the expression in Lemma~\ref{lem:diff-R_t}.
This control is established in a sequence of analytic results,
which rely heavily on the smoothness of the semigroup $P_t$, concentration
of Gaussian vectors and $L^p$ interpolation inequalities.
In the end, we show that if $\delta = \delta(f, g)$ is small then
for every $t > 0$,
$v_t$ is $\epsilon(\delta, t)$ close to a linear function.
Since $\Phi$ is a contraction, this implies that $f_t$ must be close to
a function of the form $\Phi(\inr{x}{a} - b)$.

We would like to then conclude the proof by applying $P_t^{-1}$,
and saying that $f$ must be close to $P_t^{-1} \Phi(\inr{x}{a} - b)$,
which also has the form $\Phi(\inr{x}{a'} - b')$. The obvious problem here is that
$P_t^{-1}$ is not a bounded operator, but we work around this by arguing that it acts
boundedly on the functions that we care about. This part of the argument marks
a substantial departure from~\cite{MosselNeeman:12}, where
our argument used smoothness
and spectral information. Here, we will use a geometric argument
to say that if $h = 1_A - 1_B$ where $B$ is a half-space, then $\E |h|$ can be bounded
in terms of $\E |P_t h|$. This improved argument is essentially the reason
that the rates in Theorem~\ref{thm:2-funcs-robustness} are polynomial,
while the rates in~\cite{MosselNeeman:12} were logarithmic.

\subsection{Subsequent work}

A quite different study of the functional $\E_\rho J(f(X), g(Y); \rho)$ turns out
to yield yet another proof of Borell's inequality: in a subsequent work with
De~\cite{DeMoNe:13}, the authors give a proof of Borell's inequality by
first proving a four-point inequality for $J$ which tensorizes to the discrete cube.
Applying the central limit theorem then recovers Borell's inequality.
That approach is similar to Bobkov's elementary proof of the Gaussian isoperimetric
inequality~\cite{Bobkov:96b}. The proof in~\cite{DeMoNe:13} has an advantage
and a disadvantage compared to the one presented here. The advantage of
the tensorization argument is that
it directly yields some interesting inequalities on the cube (in particular, one
obtains a direct proof of the ``majority is stablest'' theorem), while the
proof we present here has the advantage of giving control over the
deficit. In particular, we don't know how to prove Theorem~\ref{thm:2-funcs-robustness}
using the techniques in~\cite{DeMoNe:13}.

\section{Proof of Borell's theorem}
Recall the definition of $P_t$ and $R_t$ from~\eqref{eq:P_t} and~\eqref{eq:R_t-def}.
In this section, we will compute $\diff{R_t}{t}$ and show that it is non-negative,
thereby proving Theorem~\ref{thm:functional-borell}.
First, define
$v_t = \Phi^{-1} \circ f_t$, $w_t = \Phi^{-1} \circ g_t$, and
$K(x, y; \rho) = \Pr_\rho(X \le x, Y \le b)$. Then
\[
  J(f_t(X), g_t(Y)) = K(v_t(X), w_t(Y)).
\]

\begin{lemma}\label{lem:K-diff}
  \begin{align*}
    \pdiff{K(x, y)}{x} &= \phi(x)
    \Phi\Big(\frac{y - \rho x}{\sqrt{1-\rho^2}}\Big) \\
    \pdiff{K(x, y)}{y} &= \phi(y)
    \Phi\Big(\frac{x - \rho y}{\sqrt{1-\rho^2}}\Big).
  \end{align*}
\end{lemma}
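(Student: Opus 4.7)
The plan is simply to write $K$ as a double integral against the bivariate Gaussian density and differentiate under the upper limit of integration. Concretely, if $\phi_\rho(u,v)$ denotes the density of $(X_1, Y_1)$ on $\R^2$, then
\[
K(x,y;\rho) = \int_{-\infty}^{x} \int_{-\infty}^{y} \phi_\rho(u,v)\, dv\, du,
\]
so the fundamental theorem of calculus gives
\[
\pdiff{K(x,y)}{x} = \int_{-\infty}^{y} \phi_\rho(x,v)\, dv.
\]

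The key step is then the standard factorization of the bivariate Gaussian density as marginal times conditional: since $(X_1, Y_1)$ is a standard Gaussian pair with correlation $\rho$, the conditional distribution of $Y_1$ given $X_1 = x$ is $\normal(\rho x,\, 1-\rho^2)$, so
\[
\phi_\rho(x, v) = \phi(x) \cdot \frac{1}{\sqrt{1-\rho^2}}\, \phi\!\left(\frac{v - \rho x}{\sqrt{1-\rho^2}}\right).
\]
Substituting this into the displayed integral, pulling out $\phi(x)$, and performing the change of variable $w = (v-\rho x)/\sqrt{1-\rho^2}$, the integral becomes $\phi(x)\,\Phi((y-\rho x)/\sqrt{1-\rho^2})$, which is the first claimed formula. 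The second formula follows by the same argument with the roles of $x$ and $y$ exchanged, using that $(X_1, Y_1)$ and $(Y_1, X_1)$ have the same joint distribution.

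There is no real obstacle here; the only thing to be careful about is justifying the differentiation under the integral sign, which is immediate because $\phi_\rho$ is continuous and, for fixed $y$, the integrand $\int_{-\infty}^y \phi_\rho(\cdot, v)\, dv$ is a continuous function of $x$. The whole proof amounts to one application of the fundamental theorem of calculus plus the conditional density formula for a bivariate normal.
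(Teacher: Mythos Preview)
Your proof is correct and essentially the same as the paper's: the paper writes $Y=\rho X+\sqrt{1-\rho^2}\,Z$ with $X,Z$ independent, expresses $K$ as a double integral in $(s,t)=(X,Z)$ coordinates, and differentiates in the upper limit, which is exactly your ``marginal times conditional'' factorization followed by the change of variable $w=(v-\rho x)/\sqrt{1-\rho^2}$, just performed before rather than after differentiating.
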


\begin{proof}
  Note that $Y$ can be written as $\rho X + \sqrt{1 - \rho^2} Z$, where
  $X$ and $Z$ independent standard Gaussian vectors.
  Then $\{X \le x, Y \le y\} = \{X \le x, Z \le \frac{y - \rho X}{\sqrt{1 - \rho^2}}\}$,
  and so
  \[
    K(x,y)
    =
    \int_{-\infty}^x \int_{-\infty}^{\frac{y - \rho s}{\sqrt{1-\rho^2}}}
    \phi(s)\phi(t)\: dt\: ds.
  \]
  Differentiating in $x$,
  \begin{align*}
    \pdiff{K(x, y)}{x} &= 
    \int_{-\infty}^{\frac{y - \rho x}{\sqrt{1-\rho^2}}}
    \phi(x)\phi(t)\: dt \\
    &= \phi(x)
        \Phi\Big(\frac{y - \rho x}{\sqrt{1 - \rho^2}}\Big).
  \end{align*}
  This proves the first claim. The second claim follows because $K(x, y)$
  is symmetric in $x$ and $y$.
\end{proof}

\begin{lemma}\label{lem:diff-R_t}
  \[
    \diff{R_t}{t} = \frac{\rho}{2\pi\sqrt{1-\rho^2}}
    \E_\rho \exp\Big(-\frac{v_t^2 + w_t^2 - 2\rho v_t w_t}{2(1-\rho^2)}\Big)
    |\grad v_t - \grad w_t|^2.
  \]
\end{lemma}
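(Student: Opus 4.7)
The plan is to differentiate $R_t$ under the integral, rewrite time derivatives in terms of the OU generator $L = \Delta - x\cdot\nabla$, integrate by parts against the joint Gaussian measure, and exploit structural identities for the derivatives of $K$ to produce a clean cancellation.

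First, I would compute $\partial_t v_t$. Since $f_t = \Phi(v_t)$ and $\partial_t f_t = L f_t$, the chain rule (with $\phi' = -x\phi$) gives
\[
\partial_t v_t = L v_t - v_t\,|\grad v_t|^2,
\]
and analogously for $w_t$. Differentiating under the expectation in the definition of $R_t$ then yields
\[
\diff{R_t}{t} = \E_\rho\bigl[K_x\,(Lv_t(X) - v_t(X)|\grad v_t(X)|^2) + K_y\,(Lw_t(Y) - w_t(Y)|\grad w_t(Y)|^2)\bigr],
\]
where $K_x, K_y$ are evaluated at $(v_t(X), w_t(Y))$.

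Next, to handle the terms with $L$, I would realize $(X,Y)$ as $X = A$, $Y = \rho A + \sqrt{1-\rho^2}\,B$ for independent standard Gaussians $A,B$, and let $\mathcal L = L_A + L_B$ denote the OU generator on $\R^{2n}$. A short computation shows $Lv_t(X) = \mathcal L[v_t(A)]$ trivially, and also $Lw_t(Y) = \mathcal L[w_t(\rho A + \sqrt{1-\rho^2} B)]$ because the $\rho^2$ and $1-\rho^2$ contributions to the Laplacian and drift combine back into $\Delta w_t(Y) - Y\cdot \grad w_t(Y)$. Gaussian IBP on $(A,B)$-space then gives
\[
\E[K_x\, Lv_t(X)] = -\E\bigl[K_{xx}\,|\grad v_t(X)|^2 + \rho K_{xy}\,\grad v_t(X)\cdot \grad w_t(Y)\bigr],
\]
and similarly
\[
\E[K_y\, Lw_t(Y)] = -\E\bigl[K_{yy}\,|\grad w_t(Y)|^2 + \rho K_{xy}\,\grad v_t(X)\cdot \grad w_t(Y)\bigr],
\]
where the $\rho$ factors arise because $\nabla_A$ picks up the cross terms from $Y = \rho A + \sqrt{1-\rho^2} B$.

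Finally, from Lemma~\ref{lem:K-diff} a direct differentiation gives the key identities
\[
K_{xx} = -x\,K_x - \rho\, K_{xy}, \qquad K_{yy} = -y\,K_y - \rho\, K_{xy}, \qquad K_{xy} = \frac{1}{2\pi\sqrt{1-\rho^2}}\exp\!\Big(-\tfrac{x^2 - 2\rho xy + y^2}{2(1-\rho^2)}\Big).
\]
Substituting these into the IBP identities and combining with the $-v_t|\grad v_t|^2$ and $-w_t|\grad w_t|^2$ pieces shows that the $v_tK_x|\grad v_t|^2$ and $w_tK_y|\grad w_t|^2$ contributions cancel exactly, while the surviving $-\rho K_{xy}(|\grad v_t|^2 + |\grad w_t|^2) + 2\rho K_{xy}\grad v_t\cdot\grad w_t$ completes the square into $-\rho K_{xy}|\grad v_t - \grad w_t|^2$ (with an overall sign flip from the IBP). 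Plugging in the explicit formula for $K_{xy}$ yields the desired expression for $\diff{R_t}{t}$.

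The main obstacle is the IBP step: one must be careful that $Lv_t(X)$ and $Lw_t(Y)$ really lift to $\mathcal L$ applied to the same functions viewed on $(A,B)$-space, and track the $\rho$ factors produced when $\nabla_A$ and $\nabla_B$ hit $w_t(Y)$. Once this is done correctly, the algebraic cancellation and the completion of the square are essentially forced by the identities $K_{xx} + \rho K_{xy} = -xK_x$ and its symmetric counterpart.
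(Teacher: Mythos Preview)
Your argument is correct. It follows the same semigroup strategy as the paper (differentiate $R_t$, then Gaussian integrate by parts using the decomposition $Y=\rho X+\sqrt{1-\rho^2}Z$), but the bookkeeping is organized differently. The paper works at the level of $f_t$: it writes $\partial_t v_t = Lf_t/\phi(v_t)$, uses the factorization $K_x(v_t,w_t)/\phi(v_t)=\Phi\big(\tfrac{w_t-\rho v_t}{\sqrt{1-\rho^2}}\big)$ from Lemma~\ref{lem:K-diff} to cancel the $\phi(v_t)$ \emph{before} integrating by parts, and then IBP against $Lf_t$ produces the answer in one step without ever needing second derivatives of $K$. You instead work at the level of $v_t$, carry the extra $-v_t|\grad v_t|^2$ term from $\partial_t v_t = Lv_t - v_t|\grad v_t|^2$, and rely on the identities $K_{xx}+\rho K_{xy}=-xK_x$ and $K_{yy}+\rho K_{xy}=-yK_y$ to make that term cancel after IBP. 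Your route is slightly longer but more mechanical, and it makes the role of the PDE satisfied by $K$ (essentially $K_{xx}+\rho K_{xy}+xK_x=0$) explicit; the paper's route is shorter because it exploits the special product structure of $K_x$ at the outset.
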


Before we prove Lemma~\ref{lem:diff-R_t}, note that it immediately
implies Theorem~\ref{thm:functional-borell} because the right hand
side in Lemma~\ref{lem:diff-R_t} is clearly non-negative.

\begin{proof}
  Set $L = \Delta - \inr{x}{\grad}$; it is
  well-known (and easy to check by direct computation) that $\diff{f_t}{t} = L f_t$
  for all $t \ge 0$.
  The integration by parts formula
  \begin{equation}\label{eq:int-parts}
    \E f(X) L g(X) = -\E \inr{\grad f(X)}{\grad g(X)}
  \end{equation}
  for bounded smooth functions $f$ and $g$
  is also standard and easily checked. Thus,
  \begin{equation}\label{eq:R_t}
    \diff{R_t}{t} = \E_\rho \Big(K_x(v_t(X), w_t(Y)) \diff{v_t(X)}{t} \Big)
    + \E_\rho \Big(K_y(v_t(X), w_t(Y)) \diff{w_t(X)}{t}\Big).
  \end{equation}
  Now, the chain rule implies that $\diff{v_t}{t} = \frac{L f_t}{\phi(v_t)}$.
  Hence, the first term of~\eqref{eq:R_t} is
  \begin{equation}\label{eq:func-borell-1}
    \E_\rho \Big(\frac{K_x(v_t(X), w_t(Y)}{\phi(v_t(X))} L f_t(X) \Big)
    = \E_\rho \Phi\Big(\frac{w_t(Y) - \rho v_t(X)}{\sqrt{1-\rho^2}}\Big) L f_t(X),
  \end{equation}
  where we have used Lemma~\ref{lem:K-diff}. Now write
  $Y = \rho X + \sqrt{1 - \rho^2} Z$ (with $X$ and $Z$ independent);
  conditioning on $Z$ and and applying
  the integration by parts~\eqref{eq:int-parts} with respect to $X$, we have
  \begin{align}
    \eqref{eq:func-borell-1}
    &= -\frac{\rho}{\sqrt{1-\rho^2}}
    \E_\rho \phi\Big(\frac{w_t - \rho v_t}{\sqrt{1-\rho^2}}\Big)
     \inr{\grad w_t - \grad v_t}{\grad f_t} \notag \\
     &= \frac{\rho}{\sqrt{1-\rho^2}}
    \E_\rho \phi\Big(\frac{v_t - \rho w_t}{\sqrt{1-\rho^2}}\Big) \phi(v_t)
     \inr{\grad v_t - \grad w_t}{\grad v_t}.
     \label{eq:func-borell-2}
  \end{align}
  where we have written, for brevity, $v_t$ and $w_t$ instead of $v_t(X)$
  and $w_t(Y)$.
  Since $K$ is symmetric in its arguments, there is a similar computation
  for the second term of~\eqref{eq:R_t}:
  \begin{equation}
    \E \Big(K_y(v_t(X), w_t(Y)) \diff{w_t(X)}{t}\Big) 
    = -\frac{\rho}{\sqrt{1-\rho^2}}
    \E_\rho \phi\Big(\frac{w_t - \rho v_t}{\sqrt{1-\rho^2}}\Big) \phi(w_t)
     \inr{\grad v_t - \grad w_t}{\grad w_t}.
    \label{eq:func-borell-3}
  \end{equation}
  Note that
  \[
    \phi\Big(\frac{w_t - \rho v_t}{\sqrt{1-\rho^2}}\Big) \phi(v_t)
    = \phi\Big(\frac{v_t - \rho w_t}{\sqrt{1-\rho^2}}\Big) \phi(w_t)
    = \frac{1}{2\pi} \exp\Big(-\frac{v_t^2 + w_t^2 - 2\rho v_t w_t}{2(1-\rho^2)}
    \Big);
  \]
  hence, we can plug~\eqref{eq:func-borell-2} and~\eqref{eq:func-borell-3}
  into~\eqref{eq:R_t} to obtain
  \[
    \diff{R_t}{t} = \frac{\rho}{2\pi\sqrt{1-\rho^2}}
    \E \exp\Big(-\frac{v_t^2 + w_t^2 - 2\rho v_t w_t}{2(1-\rho^2)}\Big)
    |\grad v_t - \grad w_t|^2.
    \qedhere
  \]
\end{proof}

\section{The equality case}

Lemma~\ref{lem:diff-R_t} allows us to
analyze the the equality case (Theorem~\ref{thm:equality}), with
very little additional effort. Similar ideas were used by
Carlen and Kerce~\cite{CarlenKerce:01} to analyze the equality case
in the standard Gaussian isoperimetric problem. 
Clearly, Lemma~\ref{lem:diff-R_t} implies that if for every $t$,
$v_t$ and
$w_t$ are linear functions with the same slope, then equality is
attained in Theorem~\ref{thm:functional-borell}. To prove
Theorem~\ref{thm:equality}, we will show that
the converse also holds
(ie.\ if equality is attained
then $v_t$ and $w_t$ are linear functions with the same slope).
Then we will take $t \to 0$ to obtain
the desired conclusion regarding $f$ and $g$.

First of all, if $f(x) = 1_{\inr{a}{x-b} \ge 0}$, then
a direct computation gives
\begin{equation}\label{eq:P_t-of-half-space}
  f_t(x)
  = \Phi\left(k_t \frac{\inr{a}{x - e^t b}}{|a|}\right),
\end{equation}
where $k_t = (e^{2t} - 1)^{-1/2}$.
Since $P_t$ is injective, it follows that whenever
$f_t = \Phi(\inr{a}{x - b'})$ for some $a, b$ with $|a| = k_t$,
$f$ must have the form $f(x) = 1_{\{\inr{a}{x-b} \ge 0\}}$.
Since, moreover, $k_t$ is decreasing in $t$, we have the following lemma:
\begin{lemma}\label{lem:pull-back}
  If $f_t(x) =  \Phi(\inr{a}{x - b'})$ for some $a, b' \in \R^n$ with
  $|a| \le k_t$, then there exists $b \in \R^n$
  such that if $\tilde f(x) = 1_{\{\inr{a}{x-b} \ge 0\}}$ then
  $f = P_s \tilde f$, where $s$ solves $|a| = k_{s+t}$.
\end{lemma}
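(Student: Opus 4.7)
The plan is a direct parameter‐matching argument using the explicit formula \eqref{eq:P_t-of-half-space} together with the injectivity of the semigroup $P_t$.

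First, I would choose $s$. Since $k_u = (e^{2u}-1)^{-1/2}$ is continuous and strictly decreasing in $u$ with range $(0,\infty)$, the hypothesis $|a| \le k_t$ guarantees a unique $s \ge 0$ satisfying $k_{s+t} = |a|$. This is the $s$ claimed in the statement.

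Next, I would produce the candidate half-space. Define $b = e^{-(s+t)} b'$ and $\tilde f(x) = 1_{\{\inr{a}{x-b}\ge 0\}}$. Applying \eqref{eq:P_t-of-half-space} with the exponent $s+t$ in place of $t$ gives
\[
(P_{s+t}\tilde f)(x) \;=\; \Phi\!\left(k_{s+t}\,\frac{\inr{a}{x - e^{s+t} b}}{|a|}\right).
\]
With the two choices above, $k_{s+t}/|a| = 1$ and $e^{s+t} b = b'$, so the argument of $\Phi$ collapses exactly to $\inr{a}{x - b'}$. Therefore $P_{s+t}\tilde f = f_t = P_t f$.

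Finally, I would invoke the injectivity of $P_t$ (this is standard; $P_t$ acts as multiplication by $e^{-tk}$ on the $k$-th Hermite component, hence is one-to-one on $L^2(\gamma_n)$, and the text has already used this property just before the lemma). Since $P_t(P_s\tilde f) = P_{s+t}\tilde f = P_t f$, cancelling $P_t$ yields $f = P_s\tilde f$, as required. There is no real obstacle here beyond verifying that the two parameters $s$ and $b$ can be matched simultaneously, and the restriction $|a|\le k_t$ is precisely what makes the needed $s\ge 0$ exist; any larger $|a|$ would force $s<0$, where the semigroup element $P_s$ is not defined.
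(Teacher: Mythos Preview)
Your proposal is correct and follows essentially the same approach as the paper: the paper's argument, given in the paragraph immediately preceding the lemma, consists precisely of combining the explicit formula~\eqref{eq:P_t-of-half-space}, the injectivity of $P_t$, and the fact that $k_t$ is strictly decreasing. You have simply written out these steps in more detail, including the explicit choice $b = e^{-(s+t)} b'$ and the use of the semigroup identity $P_{s+t} = P_t P_s$ before cancelling $P_t$.
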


In order to apply Lemma~\ref{lem:pull-back}, we will use the
following pointwise bound on $\grad v_t$, whose proof can be
found in~\cite{BakryLedoux:96}.
Note that the bound is sharp because,
according to~\eqref{eq:P_t-of-half-space}, equality is attained when
$f$ is the indicator function of a half-space.
\begin{lemma}\label{lem:grad-bound-v}
For any function $f: \R^n \to [0, 1]$, any $t > 0$,
and any $x \in \R^n$,
\[
  |\grad v_t(x)| \le k_t.
\]
\end{lemma}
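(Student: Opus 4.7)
The plan is to reduce the $n$-dimensional bound on $|\grad v_t(x)|$ to a one-dimensional Gaussian rearrangement inequality by combining the Mehler representation of $f_t$ with conditioning on a Gaussian direction. Fix a unit vector $\theta \in \R^n$, write $s = \sqrt{1-e^{-2t}}$ (so that $k_t = e^{-t}/s$), and recall from~\eqref{eq:P_t} that $f_t(x) = \E f(e^{-t}x + sY)$ for $Y \sim \gamma_n$. Differentiating under the integral sign (equivalently, integrating by parts against the Gaussian density) yields
\[\inr{\theta}{\grad f_t(x)} \;=\; k_t\, \E\bigl[f(e^{-t}x + sY)\,\inr{\theta}{Y}\bigr].\]
The goal then becomes to bound the right-hand side by $k_t\,\phi(v_t(x))$, after which the chain-rule identity $\grad v_t = \grad f_t / \phi(v_t)$ and a supremum over $\theta$ finish the argument.

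I would decompose $Y = Y_\theta\,\theta + Y_\perp$ into independent components with $Y_\theta \sim \normal(0,1)$, condition on $Y_\perp$, and introduce the slice $h_\omega(y) := f(e^{-t}x + s\omega + sy\,\theta)$, a function into $[0,1]$. The inner conditional expectation then has the form $\int h_\omega(y)\,y\,\phi(y)\,dy$, and the crux of the proof is the one-dimensional inequality
\[\biggl|\int_\R h(y)\,y\,\phi(y)\,dy\biggr| \;\le\; \phi\!\Bigl(\Phi^{-1}\!\Bigl(\int_\R h(y)\,\phi(y)\,dy\Bigr)\Bigr) \qquad \text{for any measurable } h:\R\to[0,1].\]
I would prove this by the bathtub principle: with $a = \int h\,\phi$ fixed, the linear functional $h \mapsto \int h\,y\,\phi$ is maximized by $h^\star = 1_{\{y > c\}}$ with $c = \Phi^{-1}(1-a)$, attaining $\int_c^\infty y\,\phi(y)\,dy = \phi(c) = \phi(\Phi^{-1}(a))$; the minimum is symmetric. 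This is precisely the step at which half-spaces emerge as extremizers, matching formula~\eqref{eq:P_t-of-half-space}.

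Applying the slice inequality pointwise in $Y_\perp$ and writing $I := \phi \circ \Phi^{-1}$ for the Gaussian isoperimetric profile, which is well known to be concave on $[0,1]$, I combine these with Jensen's inequality:
\[|\inr{\theta}{\grad f_t(x)}| \;\le\; k_t\,\E_{Y_\perp}\!\bigl[I\!\bigl(\E[h_{Y_\perp}(Y_\theta)\mid Y_\perp]\bigr)\bigr] \;\le\; k_t\,I\!\bigl(\E f(e^{-t}x + sY)\bigr) \;=\; k_t\,\phi(v_t(x)).\]
Dividing by $\phi(v_t(x))$ gives $|\inr{\theta}{\grad v_t(x)}| \le k_t$, and taking the supremum over unit $\theta$ yields the lemma.

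The only mildly delicate point is the order of the two bounds in the final chain: one must apply the sharp 1D rearrangement inequality \emph{before} integrating in $Y_\perp$ (so as to harvest the pointwise isoperimetric gain in each slice) and only then invoke concavity of $I$ to pull the outer expectation inside. Everything else — differentiability under the integral, which is immediate since the Mehler kernel is $C^\infty$ and $f$ is bounded, and the bathtub computation — is routine. The constant $k_t$ is sharp since equality propagates through both inequalities when $f$ is (a.s.) the indicator of a half-space with normal $\theta$, consistent with~\eqref{eq:P_t-of-half-space}.
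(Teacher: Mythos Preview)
Your proof is correct. The paper does not actually prove this lemma: it simply refers to Bakry--Ledoux~\cite{BakryLedoux:96}. So the relevant comparison is between your argument and theirs.

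Your route is a direct, elementary computation specific to the Gaussian/Ornstein--Uhlenbeck setting: you exploit the explicit Mehler representation to write $\inr{\theta}{\grad f_t(x)}$ as a Gaussian integral, reduce via conditioning to the one-dimensional bathtub inequality $\bigl|\int h\,y\,\phi\bigr|\le I\bigl(\int h\,\phi\bigr)$, and then close with Jensen's inequality using the concavity of $I=\phi\circ\Phi^{-1}$. The Bakry--Ledoux argument is a semigroup interpolation in the $\Gamma$-calculus: one differentiates a suitable functional of the form $s\mapsto P_s\,\Psi(P_{t-s}f,\Gamma(P_{t-s}f))$ and uses the curvature condition $\Gamma_2\ge\Gamma$ (i.e.\ $\mathrm{CD}(1,\infty)$) to show the derivative has a sign. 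Their proof therefore works for any diffusion semigroup satisfying that curvature bound, not just Ornstein--Uhlenbeck; yours, by contrast, is shorter and fully self-contained for the Gaussian case, and makes transparent why half-spaces are extremal (they are exactly the bathtub optimizers in each slice). Both approaches give the same sharp constant $k_t$.
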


\begin{proof}[Proof of Theorem~\ref{thm:equality}]
  Suppose that equality is attained in~\eqref{eq:functional-borell}. Since
  $\diff{R_t}{t}$ is non-negative, it must be zero for almost every $t > 0$.
  In particular, we may fix some $t > 0$ such that $\diff{R_t}{t} = 0$.
  Note that everything in Lemma~\ref{lem:diff-R_t} is strictly positive,
  except for the last term, which can be zero. Therefore,
  $\diff{R_t}{t} = 0$ implies that $\grad v_t(X) = \grad w_t(Y)$ almost
  surely. Since the conditional distribution of $Y$ given $X$ is fully supported,
  $\grad v_t$ and $\grad w_t$ must be almost surely equal to some constant $a' \in \R^n$.
  Moreover, $v_t$ and $w_t$ are smooth functions (because $f_t$, $g_t$
  and $\Phi^{-1}$ are smooth); hence, $v_t(x) = \inr{a}{x - b'}$
  and $w_t(x) = \inr{a}{x - d'}$ for some $b', d' \in \R^n$, and so
  \begin{align*}
    f_t(x) &= \Phi(\inr{a}{x - b'}) \\
    g_t(x) &= \Phi(\inr{a}{x - d'}).
  \end{align*}

  Now, Lemma~\ref{lem:grad-bound-v} asserts that $|a| = |\grad v_t| \le k_t$.
  Hence, Lemma~\ref{lem:pull-back} implies that there is some
  $b$ such that if $\tilde f(x) = 1_{\inr{a}{x-b} \ge 0}$ then
  $f = P_s \tilde f$, where $s$ solves $|a| = k_{s+t}$.
  In particular, $f$ takes one of the two forms indicated in
  Theorem~\ref{thm:equality}: if $s = 0$ then
  $f(x) = \tilde f(x) = 1_{\inr{a}{x-b} \ge 0}$.
  On the other hand, $s > 0$ implies, by~\eqref{eq:P_t-of-half-space},
  that $f_s = \Phi(k_s \inr{\frac{a}{|a|}}{x-e^s b})$, which we can
  write in the form $\Phi(\inr{a}{x-b})$ by replacing
  $k_s \frac{a}{|a|}$ with $a$ and $k_s e^s b$ with $b$.
  We complete the proof by applying the same argument to $g$.
\end{proof}

\section{Robustness: approximation for large $t$}

The proof of Theorem~\ref{thm:2-funcs-robustness} follows the same general lines
as the one in~\cite{MosselNeeman:12}. Our starting point is
Lemma~\ref{lem:diff-R_t}, and the observation that
if~\eqref{eq:functional-borell} is close to an equality then
$\diff{R_t}{t}$ must be small for most $t$. 
For such $t$, using Lemma~\ref{lem:diff-R_t}, we will argue that $v_t$
must be close
to linear for that $t$; it then follows
that $f_t$ must be close to one of the equality cases
in Theorem~\ref{thm:equality}.
Finally, we use a time-reversal argument to show that
$f$ must be close to one of those equality cases also.

Our proof will be divided into two main parts. In this section, we will
show that $v_t$ is close to linear; we will give the time-reversal
argument in Section~\ref{sec:time-rev}. The main result in this section,
therefore, is Proposition~\ref{prop:2-funcs-large-t}, which says that
$f_t$ must be close to one of the equality cases of Theorem~\ref{thm:equality}.
Recall the definition of $\delta$ from~\eqref{eq:def_delta}, and
recall that $k_t = (e^{2t} - 1)^{1/2}$.

\begin{proposition}\label{prop:2-funcs-large-t}
  For any $0 < \rho < 1$, and
  for any $t > 0$, there exists $C(t, \rho)$ such that
  for any $f, g$ and for any $0 < \alpha < 1$, there exist $b, d \in \R$
  and $a \in \R^n$ with $|a| \le k_t$ such that
  \begin{multline*}
    \E \big(f_{t}(X) - \Phi(\inr{a}{X} - b)\big)^2
    + \E \big(g_{t}(X) - \Phi(\inr{a}{X} - d)\big)^2 \\
    \le C(t, \rho)
    m(f, g)^{\frac{(1-\rho)^2}{8k_t^2(1 + k_t^2)^2(1+\alpha)}}
    \Big(\frac{\delta}{\alpha}\Big)^{\frac{1}{1 + 4 k_t^2/(1-\rho)}
    \frac{1}{1+\alpha}}
  \end{multline*}
  where $m(f, g) = \E f(1-\E f) \E g (1-\E g)$.
\end{proposition}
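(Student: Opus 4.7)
The plan is to track $\diff{R_s}{s}$ given by Lemma~\ref{lem:diff-R_t}. Since $R_s$ is nondecreasing with $R_\infty - R_0 \le \delta$, a pigeonhole on $s \in [t/2, t]$ produces some $s^\star \in [t/2, t]$ at which $\diff{R_{s^\star}}{s^\star} \le 2\delta/t$. I will prove the desired linear approximations of $v_{s^\star}$ and $w_{s^\star}$ at $s^\star$ and then push them forward to time $t$ via $P_{t - s^\star}$: this is an $L^2$-contraction, and by the explicit formula~\eqref{eq:P_t-of-half-space} it maps $\Phi(\inr{a_{s^\star}}{x} - b')$ with $|a_{s^\star}| \le k_{s^\star}$ to a function of the same form with $|a_t| \le k_t$. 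This avoids inverting $P_u$ and is consistent with the proposition being stated directly at time $t$.

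The core step is extracting linearity from $\diff{R_{s^\star}}{s^\star} \le 2\delta/t$. The weight $\exp(-(v_{s^\star}^2 + w_{s^\star}^2 - 2\rho v_{s^\star}w_{s^\star})/(2(1-\rho^2)))$ in Lemma~\ref{lem:diff-R_t} degenerates where $|v_{s^\star}|$ or $|w_{s^\star}|$ is large, so I truncate to $G_M = \{|v_{s^\star}|,\,|w_{s^\star}| \le M\}$, on which the weight is at least $e^{-M^2/(1-\rho)}$; this gives
\[
  \E\,\mathbf{1}_{G_M}|\grad v_{s^\star} - \grad w_{s^\star}|^2 \le C(\rho)\,e^{M^2/(1-\rho)}\,\frac{\delta}{t}.
\]
Combined with the pointwise gradient bound from Lemma~\ref{lem:grad-bound-v}, a Poincar\'e-type inequality for the $\rho$-correlated pair $(X, Y)$ then forces $\grad v_{s^\star}$ and $\grad w_{s^\star}$ to lie near a common constant $a \in \R^n$ with $|a| \le k_{s^\star}$, which integrates up to $v_{s^\star} \approx \inr{a}{x} - b'$ and $w_{s^\star} \approx \inr{a}{x} - d'$ in $L^2(G_M)$. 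On the complement $G_M^c$, the function $f_{s^\star} = \Phi(v_{s^\star})$ is already nearly $0$ or $1$, so its $L^2$-distance to $\Phi(\inr{a}{x} - b')$ is essentially the Gaussian measure of $G_M^c$. Since $v_{s^\star}$ is $k_{s^\star}$-Lipschitz, that measure decays like $\exp(-cM^2/k_{s^\star}^2)$, and the factor $m(f, g) = \E f(1-\E f)\E g(1-\E g)$ appears when sharpening the tail by tracking the location of the mean of $v_{s^\star}$ (which is close to $\Phi^{-1}(\E f)$). Optimizing $M$ to balance $e^{M^2/(1-\rho)}$ against $e^{-cM^2/k_{s^\star}^2}$ produces the exponent $\frac{1}{1 + 4k_t^2/(1-\rho)}$ on $\delta$ and the exponent $\frac{(1-\rho)^2}{8k_t^2(1+k_t^2)^2}$ on $m$; the free parameter $\alpha$ is the usual slack arising when combining the weighted $L^2$ bound on $G_M$ with the crude $L^\infty$ estimate on $G_M^c$ via an $L^p$-interpolation/H\"older step.

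The hardest step I expect is the Poincar\'e-type inequality for the ``cross'' gradient $\E\,\mathbf{1}_{G_M}|\grad v_{s^\star}(X) - \grad w_{s^\star}(Y)|^2$: it is not a standard Gaussian Poincar\'e quantity because it pairs two \emph{different} functions evaluated on a $\rho$-correlated pair, and the indicator $\mathbf{1}_{G_M}$ destroys the tensor-product structure. The clean way to proceed seems to be to expand $\grad v_{s^\star}$ and $\grad w_{s^\star}$ in the Hermite basis, exploit that $\rho$-correlated expectations contract the $k$-th Hermite component by $\rho^k \le \rho$ for $k \ge 1$, and reintroduce the truncation via a Cauchy--Schwarz step tuned by $\alpha$. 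The need to avoid losing $e^{M^2/(1-\rho)}$ twice--once in the truncation and once in the interpolation--is what fixes the numerical constants in the final exponents.
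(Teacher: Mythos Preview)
Your outline diverges from the paper's proof in two places, and both affect whether you can recover the stated exponents.

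\medskip
\noindent\textbf{Truncation versus reverse H\"older.} The paper never truncates to a good set $G_M$. Instead it applies the reverse H\"older inequality directly to the integrand of Lemma~\ref{lem:diff-R_t}:
\[
\diff{R_s}{s}\ \ge\ \frac{\rho}{2\pi\sqrt{1-\rho^2}}
\Big(\E_\rho \exp\Big(\beta\,\tfrac{v_s^2+w_s^2-2\rho v_s w_s}{2(1-\rho^2)}\Big)\Big)^{-1/\beta}
\Big(\E_\rho|\grad v_s-\grad w_s|^{2r}\Big)^{1/r},
\]
with $\tfrac1r-\tfrac1\beta=1$. The exponential moment is bounded via the $k_s$-Lipschitz property of $v_s,w_s$ (Lemma~\ref{lem:grad-bound-v}) and Gaussian concentration, and the $m(f,g)$ factor enters through Lemma~\ref{lem:mean-median} relating the median of $v_s$ to $\E f(1-\E f)$. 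One then uses the uniform bound $|\grad v_s-\grad w_s|\le 2k_s$ to pass from the $L^{2r}$ norm to the $L^2$ norm, and finally applies the \emph{global} Poincar\'e-type inequality of Lemma~\ref{lem:2-funcs-poincare}. This sidesteps entirely the step you flag as hardest. Your proposed route---a Hermite expansion of $\grad v_{s^\star},\grad w_{s^\star}$ with a Cauchy--Schwarz reinsertion of $\mathbf 1_{G_M}$---does not obviously work, since the indicator destroys the orthogonality that the Hermite argument needs. If you want to salvage the truncation idea, the clean fix is to use the pointwise gradient bound on $G_M^c$ at the level of $|\grad v_{s^\star}-\grad w_{s^\star}|^2$ (not at the level of $f_{s^\star}$), add it to the bound on $G_M$ to get a \emph{full} $L^2$ gradient estimate, and only then invoke global Poincar\'e. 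But that is essentially the reverse-H\"older argument rewritten.

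\medskip
\noindent\textbf{Pigeonhole direction and the role of $\alpha$.} The paper pigeonholes on $s\in[t,\,t(1+\alpha)]$, not on $[t/2,t]$. Since $s\ge t$ gives $k_s\le k_t$, the admissible H\"older exponent $r=\tfrac{1}{1+4k_t^2/(1-\rho)}$ is valid at time $s$; the estimate is then pulled \emph{back} to time $t$ via the interpolation inequality
\[
\E(P_t h)^2\ \le\ \big(\E(P_s h)^2\big)^{t/s}\big(\E h^2\big)^{1-t/s}
\]
(Lemma~\ref{lem:partial-pullback}), applied with $h=f-P_s^{-1}\Phi(\inr{a}{\cdot}-\E v_s)$; this is precisely where the factor $\tfrac{1}{1+\alpha}$ in the exponent comes from, and the $\delta/\alpha$ is the pigeonhole price. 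In your scheme, $s^\star\le t$ forces $k_{s^\star}\ge k_t$ (possibly as large as $k_{t/2}$), so the exponent you obtain at $s^\star$ is governed by $k_{s^\star}$, and pushing forward by the contraction $P_{t-s^\star}$ cannot improve it; you would not land on the stated exponent $\tfrac{1}{1+4k_t^2/(1-\rho)}\cdot\tfrac{1}{1+\alpha}$, and $\alpha$ does not appear as a free interpolation parameter in the way you describe.
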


Let us observe -- and this will be important when we apply
Proposition~\ref{prop:2-funcs-large-t} -- that by Lemma~\ref{lem:pull-back},
$|a| \le k_t$ implies
that $\Phi(\inr{a}{\cdot} - b)$ can be written in the form
$P_{t+s} 1_B$ for some $s > 0$ and some half-space $B$.

The main goal of this section is to prove
Proposition~\ref{prop:2-funcs-large-t}.
The proof proceeds according to the following steps:
\begin{itemize}
\item  First, using a Poincar\'e-like inequality
  (Lemma~\ref{lem:2-funcs-poincare}) we show 
that if $\E_\rho |\grad v(X) - \grad w(Y)|^2$ is small then $v$ and
$w$ are close to linear functions (with the same slope). 
\item In Proposition~\ref{prop:2-funcs-holder},
  we use the reverse H\"older inequality and some concentration properties
  to show that if $\diff{R_t}{t}$ is small, then
  $\E_\rho |\grad v_t(X) - \grad w_t(Y)|^{2p}$ must be small for some $p<1$. 
\item Using Lemma~\ref{lem:grad-bound-v},
  we argue that if
  $\E_\rho |\grad v_t(X) - \grad w_t(Y)|^{2p}$ is small then
  $\E_\rho |\grad v_t(X) - \grad w_t(Y)|^{2}$ is also small.
  Thus, we can apply the Poincar\'e inequality mentioned in the first
  bullet point, and so we obtain linear approximations for $v_t$ and $w_t$.
\end{itemize}

\subsection{A Poincar\'e-like inequality}
Recall that we proved the equality case by
arguing that if $\diff{R_t}{t} = 0$
then $|\grad v_t(X) - \grad w_t(Y)|$ is identically zero, so
$\grad v_t$ and $\grad w_t$ must be constant and thus $v_t$
and $w_t$ must be linear.
The first step towards a robustness
result is to show that if $|\grad v_t(X) - \grad w_t(Y)|$ is small,
then $v_t$ and $w_t$ must be almost linear, and with the same slope.

\begin{lemma}\label{lem:2-funcs-poincare}
  For any smooth functions $v, w \in L_2(\R^n, \gamma_n)$,
  if we set $a = \frac{1}{2} (\E \grad v + \E \grad w)$
  then for any $0 < \rho < 1$,
  \[
    \E (v(X) - \inr{X}{a} - \E v)^2
    + \E (w(X) - \inr{X}{a} - \E w)^2
    \le \frac{\E_\rho |\grad v(X) - \grad w(Y)|^2}{2(1-\rho)}.
  \]
\end{lemma}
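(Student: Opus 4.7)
My approach is to expand $v$ and $w$ in multivariate Hermite polynomials and reduce the inequality to a level-by-level comparison of coefficients, which is sharp at degree $1$ and has slack at higher degrees.

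Write $v = \sum_\alpha \hat v(\alpha) H_\alpha$ and $w = \sum_\alpha \hat w(\alpha) H_\alpha$, where $H_\alpha$ is the orthonormal Hermite basis of $L^2(\gamma_n)$. I will use three standard identities: $\E H_\alpha(X) H_\beta(X) = \delta_{\alpha\beta}$; $\E_\rho H_\alpha(X) H_\beta(Y) = \rho^{|\alpha|} \delta_{\alpha \beta}$; and $\partial_i H_\alpha = \sqrt{\alpha_i}\,H_{\alpha - e_i}$. From the last identity, $\grad v(x) = \sum_{\alpha} \hat v(\alpha) \sum_i \sqrt{\alpha_i}\,H_{\alpha - e_i}(x)\,e_i$, so that $\E \grad v = \sum_i \hat v(e_i) e_i$ (only degree-$1$ coefficients contribute), and hence $a_i = \tfrac12(\hat v(e_i) + \hat w(e_i))$.

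Next I compute both sides in coordinates. For the right-hand side, orthogonality gives
\[
  \E_\rho |\grad v(X) - \grad w(Y)|^2
  = \sum_{\alpha} |\alpha|\bigl(\hat v(\alpha)^2 + \hat w(\alpha)^2 - 2\rho^{|\alpha|-1}\hat v(\alpha)\hat w(\alpha)\bigr).
\]
For the left-hand side, subtracting off the degree-$0$ and degree-$1$ parts of $v$ (and similarly for $w$) gives
\[
  \E(v(X) - \inr{X}{a} - \E v)^2 = \sum_{|\alpha|\ge 2} \hat v(\alpha)^2 + \tfrac14\sum_i (\hat v(e_i) - \hat w(e_i))^2,
\]
with an analogous expression for $w$; summing, the total left-hand side becomes
\[
  \sum_{|\alpha|\ge 2}\bigl(\hat v(\alpha)^2 + \hat w(\alpha)^2\bigr) + \tfrac12\sum_i (\hat v(e_i) - \hat w(e_i))^2.
\]

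The inequality now reduces to a degree-by-degree comparison. At degree $k = 1$, the right-hand side contributes $\frac{1}{2(1-\rho)}\sum_i (\hat v(e_i)-\hat w(e_i))^2$, which dominates the left-hand side contribution $\tfrac12 \sum_i(\hat v(e_i)-\hat w(e_i))^2$ since $1-\rho \le 1$. At each degree $k \ge 2$, AM-GM gives $-2\rho^{k-1}\hat v(\alpha)\hat w(\alpha) \ge -\rho^{k-1}(\hat v(\alpha)^2+\hat w(\alpha)^2)$, so it suffices to show
\[
  \frac{k(1-\rho^{k-1})}{2(1-\rho)} = \tfrac{k}{2}(1 + \rho + \cdots + \rho^{k-2}) \;\ge\; 1
  \qquad \text{for } k \ge 2,\ \rho \in (0,1).
\]
This is an elementary inequality: at $k=2$ the left-hand side equals $1$ exactly, and for $k \ge 3$ the sum of $k-1 \ge 2$ positive terms times $k/2$ already exceeds $1$. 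Summing the resulting coefficient-wise inequalities over all $\alpha$ gives the lemma.

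The proof has no real obstacle; the main thing to be careful about is that the inequality is tight at the degree-$1$ level (which forces the particular constant $\frac{1}{2(1-\rho)}$) and also tight at degree $2$ in the limit $\rho \to 1$, so there is no room to lose a factor anywhere in the AM-GM step. Smoothness of $v$ and $w$ is used only to ensure that the Hermite series for $\grad v$ and $\grad w$ are well-defined and converge in $L^2$; standard truncation arguments handle this.
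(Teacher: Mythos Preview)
Your proof is correct and follows essentially the same approach as the paper: expand $v$ and $w$ in the Hermite basis, compute both sides as sums over multi-indices, and compare degree by degree using $2\hat v(\alpha)\hat w(\alpha) \le \hat v(\alpha)^2 + \hat w(\alpha)^2$. The only organizational difference is that the paper first isolates a general inequality for $\E_\rho (b(X)-c(Y))^2$ and then specializes to $b=\partial_i v$, $c=\partial_i w$, whereas you compute $\E_\rho|\grad v(X)-\grad w(Y)|^2$ in Hermite coordinates directly; the resulting coefficient comparison is the same.
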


We remark that Lemma~\ref{lem:2-funcs-poincare} achieves equality when
$v$ and $w$ are quadratic polynomials which differ only in the constant term.

In order to prove Lemma~\ref{lem:2-funcs-poincare}, we recall the Hermite
polynomials:
for $k \in \N$, define
$H_k(x) = (k!)^{-1/2} e^{x^2/2} \diff{{}^k}{x^k} e^{-x^2/2}$. It is well-known
that the $H_k$ form an orthonormal
basis of $L_2(\R, \gamma_1)$.
For a multiindex $\alpha \in \N^n$, let
\[
H_\alpha(x) = \prod_{i=1}^n H_{\alpha_i}(x_i).
\]
Then the $H_\alpha$ form an orthonormal basis of $L^2(\R^n, \gamma_n)$.
Define $|\alpha| = \sum_i \alpha_i$;
note that $H_\alpha$ is linear if and only if $|\alpha| = 1$, and
$\alpha_i = 0$ implies that $\pdiff{}{x_i} H_\alpha = 0$.
If $\alpha_i > 0$, define $S_i \alpha$ by
$(S_i \alpha)_i = \alpha_i - 1$
and $(S_i \alpha)_j = \alpha_j$ for $j \ne i$. Then a well-known
recurrence for Hermite polynomials states that
\[
\pdiff{}{x_i} H_\alpha =
\begin{cases}
\sqrt {\alpha_i} H_{S_i \alpha} & \text{if $\alpha_i > 0$} \\
0 & \text{if $\alpha_i = 0$.}
\end{cases}
\]
In particular,
\begin{equation}\label{eq:diff-hermite}
\E \Big(\pdiff{}{x_i} H_\alpha\Big)^2 =
\alpha_i.
\end{equation}

It will be convenient for us to reparametrize the Ornstein-Uhlenbeck
semigroup $P_t$: for $0 < \rho < 1$, let $T_\rho = P_{\log(1/\rho)}$.
It is then easily checked that for any $v \in L^1(\R^n, \gamma_n)$,
$\E_\rho (v(Y) | X) = (T_\rho v)(X)$.

The final piece of background that we need before proving
Lemma~\ref{lem:2-funcs-poincare} is the fact that $T_\rho$ acts diagonally
on the Hermite basis, with
\begin{equation}\label{eq:T_rho-hermite}
  T_\rho H_\alpha = \rho^{|\alpha|} H_\alpha.
\end{equation}

\begin{proof}[Proof of Lemma~\ref{lem:2-funcs-poincare}]
  First, consider two arbitrary functions
  $b(x), c(x) \in L_2(\R^n, \gamma_n)$ and suppose that
  their expansions in the Hermite basis are
  $b = \sum_{\alpha} b_\alpha H_\alpha$ and
  $c = \sum_\alpha c_\alpha H_\alpha$.
  Then
  \begin{align*}
    \E_\rho(b(X) - c(Y))^2
    &= \E b^2 + \E c^2 - 2 \E_\rho b(X) c(Y) \\
    &= \E b^2 + \E c^2 - 2 \E b(X) (T_\rho c)(X) \\
    &= \sum_\alpha \big(b_\alpha^2 + c_\alpha^2 -2 \rho^{|\alpha|}
    b_\alpha c_\alpha\big),
  \end{align*}
  where we have used~\eqref{eq:T_rho-hermite} in the last line to compute
  the Hermite expansion of $T_\rho c$. Now,
  $2 b_\alpha c_\alpha \le b_\alpha^2 + c_\alpha^2$ and so
  \begin{align}
    \E_\rho(b(X) - c(Y))^2
    &= (b_0 - c_0)^2 + \sum_{|\alpha| \ge 1}
    \big(b_\alpha^2 + c_\alpha^2 -2 \rho^{|\alpha|} b_\alpha c_\alpha\big) \notag \\
    &\ge (b_0 - c_0)^2 + \sum_{|\alpha| \ge 1} (b_\alpha^2 + c_\alpha^2)(1-\rho^{|\alpha|}) \notag \\
    &\ge (b_0 - c_0)^2 + (1-\rho) \sum_{|\alpha| \ge 1} \big(b_\alpha^2 + c_\alpha^2\big). \label{eq:hermite-inequality}
  \end{align}

  Now write $v$ and $w$ in the Hermite basis as
  $v = \sum v_\alpha H_\alpha$ and $w = \sum w_\alpha H_\alpha$. Then,
  by~\eqref{eq:diff-hermite},
  \begin{align*}
    \pdiff{v}{x_i} &= \sum_{\alpha_i \ge 1} v_\alpha \sqrt{\alpha_i} H_{S_i \alpha}\\
    \pdiff{w}{x_i} &= \sum_{\alpha_i \ge 1} w_\alpha \sqrt{\alpha_i} H_{S_i \alpha}.
  \end{align*}
  In particular, if we set
  $b = \pdiff{v}{x_i}$, then
  $b_{S_i \alpha} = \sqrt{\alpha_i} v_\alpha$ for any $\alpha$ with $\alpha_i \ge 1$.
  Specifically,
  $b_0 = v_{e_i}$ (where $e_i$ is the multi-index
  with $1$ in position $i$ and 0 elsewhere) and
  \[
    \sum_{|\alpha| \ge 1} b_\alpha^2
    = \sum_{|\alpha| \ge 2, \alpha_i \ge 1} b_{S_i \alpha}^2
    = \sum_{|\alpha| \ge 2, \alpha_i \ge 1} \alpha_i v_\alpha^2
  \]
  (Setting $c = \pdiff{w}{x_i}$,
  there is of course an analogous inequality for $c$ and $w$.)
  Applying this to~\eqref{eq:hermite-inequality}, we have
  \begin{equation}\label{eq:poincare-one-partial-diff}
    \E_\rho\left(\pdiff{v}{x_i}(X) - \pdiff{w}{x_i}(Y)\right)^2
    \ge (v_{e_i} - w_{e_i})^2 + (1-\rho)
    \sum_{|\alpha| \ge 2, \alpha_i \ge 1} \alpha_i (v_\alpha^2 + w_\alpha^2).
  \end{equation}
  Now if we apply~\eqref{eq:poincare-one-partial-diff} for each $i$
  and sum the resulting inequalities, we obtain
  \begin{equation}\label{eq:poincare-whole-grad}
    \E_\rho |\grad v(X) - \grad w(Y)|^2
    \ge \sum_{|\alpha| = 1} (v_\alpha - w_\alpha)^2
    + 2(1-\rho) \sum_{|\alpha| \ge 2} v_\alpha^2 + w_\alpha^2.
  \end{equation}

  On the other hand, let $a = \frac{1}{2}(\E \grad v + \E\grad w)$.
  Since $\E \pdiff{v}{x_i} = v_{e_i}$ and $H_{e_i}(x) = x_i$, it
  follows that
  \[
    \inr{x}{a} = \frac{1}{2} \sum_{|\alpha| = 1} (v_\alpha + w_\alpha) H_\alpha(x).
  \]
  Since $\E v = v_0$, we have
  \[
    \E (v(X) - \inr{X}{a} - \E v)^2 =
    \sum_{|\alpha| = 1} \Big(\frac{v_\alpha - w_\alpha}{2}\Big)^2
    + \sum_{|\alpha| \ge 2} v_\alpha^2.
  \]
  Adding to this the analogous expression for $w$, we obtain
  \begin{multline*}
    2(1-\rho) \big(\E (v(X) - \inr{X}{a} - \E v)^2 +
    \E (w(X) - \inr{X}{a} - \E w)^2\big) \\ =
  (1-\rho) \sum_{|\alpha| = 1} (v_\alpha - w_\alpha)^2
    + 2(1-\rho)\sum_{|\alpha| \ge 2} v_\alpha^2 + w_\alpha^2.
  \end{multline*}
  Noting that $1 - \rho \le 1$, we see that this is
  smaller than~\eqref{eq:poincare-whole-grad}. Hence
  \[
    \big(\E (v(X) - \inr{X}{a} - \E v)^2 +
    \E (w(X) - \inr{X}{a} - \E w)^2\big)
    \le \frac{\E_\rho |\grad v(X) - \grad w(Y)|^2}{2(1-\rho)}.
    \qedhere
  \]
  
\end{proof}

\subsection{A lower bound on $\diff{R_t}{t}$}

Recall the formula for $\diff{R_t}{t}$ given in Lemma~\ref{lem:diff-R_t}.
In this section, we will use the reverse-H\"older inequality to split
this formula into an exponential term and a term depending on
$|\grad v_t(X) - \grad w_t(X)|$. We will then use the smoothness
of $v_t$ and $w_t$ to bound the exponential term, with the following
result:

\begin{proposition}\label{prop:2-funcs-holder}
  For any $0 < \rho < 1$
  and any $t > 0$,
  there is a $c(t, \rho) > 0$ such that
  for any $r \le \frac{1}{1 + 4k_t^2/(1-\rho)}$ and
  for any $f$ and $g$,
  \[
    \diff{R_t}{t}
    \ge c(t, \rho)
    m^{2\frac{k_t^2 (1+k_t)^2}{1-\rho}}
    \big(\E |\grad v_t(X) - \grad w_t(Y)|^{2r}\big)^{1/r}.
  \]
\end{proposition}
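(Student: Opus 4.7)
The plan is to begin from Lemma~\ref{lem:diff-R_t} and apply H\"older's inequality to decouple the $L^{2r}$-norm of the gradient difference from the exponential weight. With $h := (v_t^2 + w_t^2 - 2\rho v_t w_t)/(2(1-\rho^2))$ and $Y := |\grad v_t(X) - \grad w_t(Y)|$, H\"older with conjugate exponents $1/r$ and $1/(1-r)$ gives
\[
\E_\rho Y^{2r} = \E_\rho\bigl[(e^{-h} Y^{2})^{r} e^{rh}\bigr] \le \bigl(\E_\rho e^{-h} Y^2\bigr)^{r}\bigl(\E_\rho e^{rh/(1-r)}\bigr)^{1-r}.
\]
Since Lemma~\ref{lem:diff-R_t} identifies $\E_\rho e^{-h} Y^2$ with $(2\pi\sqrt{1-\rho^2}/\rho)\diff{R_t}{t}$, rearranging reduces the proposition to an upper bound on $\E_\rho e^{rh/(1-r)}$.

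For that bound, I observe that the quadratic form $(a^2+b^2-2\rho ab)/(1-\rho^2)$ has largest eigenvalue $1/(1-\rho)$, so $h \le (v_t^2+w_t^2)/(2(1-\rho))$ pointwise. Setting $\beta := r/((1-r)(1-\rho))$, Cauchy--Schwarz decouples the two variables:
\[
\E_\rho e^{rh/(1-r)} \le \bigl(\E e^{\beta v_t(X)^2}\bigr)^{1/2}\bigl(\E e^{\beta w_t(Y)^2}\bigr)^{1/2}.
\]
Since $v_t$ is $k_t$-Lipschitz by Lemma~\ref{lem:grad-bound-v}, Gaussian concentration makes $v_t(X)$ sub-Gaussian around its mean with proxy $k_t^2$. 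The lifting identity $e^{\beta s^2} = \E_{Z \sim \mathcal{N}(0,1)}\bigl[e^{\sqrt{2\beta} Z s}\bigr]$ combined with Fubini and the sub-Gaussian moment-generating bound yields
\[
\E e^{\beta v_t^2} \le \frac{\exp\bigl(\beta (\E v_t)^2/(1-2\beta k_t^2)\bigr)}{\sqrt{1-2\beta k_t^2}}, \qquad 2\beta k_t^2 < 1.
\]
The hypothesis $r \le 1/(1 + 4k_t^2/(1-\rho))$ is exactly $2\beta k_t^2 \le 1/2$, so this bound safely applies.

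The remaining ingredient is to bound $(\E v_t)^2$ in terms of $\E f$. Since $v_t$ is $k_t$-Lipschitz and $\Phi$ is $(2\pi)^{-1/2}$-Lipschitz, combining Gaussian Poincar\'e with the identity $\E \Phi(v_t) = \E f$ forces $\Phi(\E v_t)$ to lie within $O(k_t)$ of $\E f$. Inverting with the standard tail asymptotics $|\Phi^{-1}(q)|^2 \lesssim \log(1/(q(1-q)))$ gives $(\E v_t)^2 \le C(k_t)\log(1/(\E f(1-\E f)))$ with $C(k_t)$ of order $(1+k_t)^2$. Substituting back yields an upper bound of the form $\E e^{\beta v_t^2} \le C(t,\rho)(\E f(1-\E f))^{-c \beta(1+k_t)^2}$, which once combined with the analogous bound for $w_t$ and carried through the $(1-r)/r$-th power in the H\"older estimate gives exactly the stated exponent $2k_t^2(1+k_t)^2/(1-\rho)$ on $m(f,g)$.

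The main obstacle is the last step. When $\E f$ is close to $0$ or $1$ and $k_t$ is large, the naive estimate $|\Phi(\E v_t) - \E f| \lesssim k_t$ becomes weaker than $\E f$ itself, so the simple inversion through $\Phi^{-1}$ no longer applies; handling this regime requires either an $L^p$ interpolation between a pointwise control on $\Phi^{-1}(f_t)$ and a concentration estimate, or a direct analysis using the median of $v_t(X)$ together with Markov's inequality applied to $f_t$ and $1-f_t$. Pinning down the precise exponent $(1+k_t)^2$, rather than a looser factor such as $k_t^4$, is what determines the quality of the rate in Theorem~\ref{thm:2-funcs-robustness} and so deserves the most care.
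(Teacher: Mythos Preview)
Your argument is essentially the paper's. The H\"older split (your standard-H\"older formulation is the dual of the paper's reverse-H\"older~\eqref{eq:rev-holder}, with the same $\beta = r/(1-r)$), the eigenvalue bound $h \le (v_t^2+w_t^2)/(2(1-\rho))$, the Cauchy--Schwarz decoupling, and the sub-Gaussian bound on $\E e^{\beta v_t^2}$ all match; the condition $2\beta k_t^2 \le 1/2$ is exactly the paper's constraint $\lambda \le 1/2$ and reproduces the hypothesis on $r$.

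The obstacle you flag in the last paragraph is real, and the paper takes the second of the two fixes you propose. It never passes through $\E v_t$: Lemma~\ref{lem:lipschitz-gaussian} is stated for the median $M_t$ of $v_t$, and then Lemma~\ref{lem:mean-median} (which rests on an estimate from~\cite{MosselNeeman:12} relating the median of $f_t$ to $\E f$) gives directly
\[
  m(f) \le 2\exp\bigl(-M_t^2/(2(1+k_t)^2)\bigr).
\]
Inverting this yields $M_t^2 \le 2(1+k_t)^2\log(2/m(f))$, and carrying the resulting factor $\exp(-\lambda M_t^2/\beta) = \exp(-2k_t^2 M_t^2/(1-\rho))$ through produces the stated exponent $2k_t^2(1+k_t)^2/(1-\rho)$. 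Your mean-based route via $|\Phi(\E v_t) - \E f| \lesssim k_t$ does fail in the regime you describe: when $k_t \gtrsim 1$ and $\E f$ is small, the bound on $\Phi(\E v_t)$ can exceed $\E f$ itself and gives no control on $|\E v_t|$. The median route sidesteps this because Lemma~\ref{lem:mean-median} works through $\Pr(f_t \ge \Phi(M_t)) \ge 1/2$ and a pointwise comparison of $f_t$ with its median, rather than a first-moment estimate.
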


There are three main ingredients in the proof of
Proposition~\ref{prop:2-funcs-holder}. The first is the reverse-H\"older
inequality, which states that for any functions $f > 0$ and $g \ge 0$
and for any $\beta > 0$ and $0 < r < 1$ with
$\frac{1}{r} - \frac{1}{\beta} = 1$,
\begin{equation}\label{eq:rev-holder}
  \E fg \ge \big(\E f^{-\beta}\big)^{-1/\beta} \big(\E g^r\big)^{1/r}.
\end{equation}

The second ingredient involves concentration properties of
the Gaussian measure. The proof is a standard computation, and we omit it.

\begin{lemma}\label{lem:lipschitz-gaussian}
  If $f: \R^n \to \R$ is 1-Lipschitz with median $M$ then
  for any $\lambda < 1$,
  \[
    \E \exp(\lambda f^2(X)/2) \le \frac{2}{\sqrt{1-\lambda}}
    e^{\frac{\lambda}{2(1-\lambda)} M^2}.
  \]
\end{lemma}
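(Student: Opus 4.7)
The plan is to reduce to the standard Gaussian concentration inequality for Lipschitz functions, a direct consequence of the Gaussian isoperimetric inequality of Borell and Sudakov--Tsirelson: if $f$ is $1$-Lipschitz with median $M$, then for every $t \ge 0$,
\[
  \Pr(f(X) - M > t) \le 1 - \Phi(t)
  \quad \text{and} \quad
  \Pr(f(X) - M < -t) \le 1 - \Phi(t).
\]
Adding these two tail bounds gives $\Pr(|f(X) - M| > t) \le \Pr(|Z| > t)$ for all $t \ge 0$, i.e.\ $|f(X) - M|$ is stochastically dominated by $|Z|$, where $Z \sim \normal(0,1)$.

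Next, I would combine this stochastic dominance with the pointwise bound $f(X)^2 \le (|M| + |f(X) - M|)^2$. Since the map $u \mapsto \exp(\lambda(|M|+u)^2/2)$ is monotone nondecreasing on $[0, \infty)$, dominance yields
\[
  \E \exp\!\Bigl(\frac{\lambda}{2} f(X)^2\Bigr) \;\le\; \E \exp\!\Bigl(\frac{\lambda}{2} (|M|+|Z|)^2\Bigr).
\]

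From here only a one-variable Gaussian integral remains. Expanding the square and using the symmetry of $Z$,
\[
  \E \exp\!\Bigl(\frac{\lambda}{2}(|M|+|Z|)^2\Bigr)
  = \frac{2\, e^{\lambda M^2/2}}{\sqrt{2\pi}} \int_0^\infty \exp\!\Bigl(\lambda|M|\, z - \frac{1-\lambda}{2}\, z^2\Bigr)\, dz.
\]
For $\lambda < 1$, I would complete the square in $z$, bound the resulting shifted Gaussian integral over $[0, \infty)$ by the full Gaussian integral over $\R$ (contributing $\sqrt{2\pi/(1-\lambda)}$), and collect exponents. The two $M^2$-coefficients combine via $\frac{\lambda}{2} + \frac{\lambda^2}{2(1-\lambda)} = \frac{\lambda}{2(1-\lambda)}$, producing exactly the claimed bound. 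There is no real obstacle here: the substantive ingredient is the two-sided Borell--Sudakov--Tsirelson tail bound for Lipschitz functionals of a Gaussian vector, and once that is invoked the rest is a routine completing-the-square computation.
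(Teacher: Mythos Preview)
Your argument is correct and is exactly the kind of ``standard computation'' the paper has in mind: the paper omits the proof entirely, stating only that it is routine. Your route via the Borell--Sudakov--Tsirelson concentration inequality, stochastic domination of $|f(X)-M|$ by $|Z|$, and a completing-the-square Gaussian integral is the natural way to carry this out, and the arithmetic checks.

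One small caveat: your monotonicity step (that $u \mapsto \exp(\lambda(|M|+u)^2/2)$ is nondecreasing) requires $\lambda \ge 0$. The lemma as stated allows any $\lambda < 1$, but in fact the inequality can fail for sufficiently negative $\lambda$ (take $f \equiv 0$, so $M = 0$, and send $\lambda \to -\infty$). This is a defect of the stated hypothesis, not of your proof; the paper only ever applies the lemma with $\lambda > 0$, so restricting to $0 \le \lambda < 1$ is harmless.
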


The third and final ingredient is a relationship between the mean
of $f$ and the median of $v_t$.

\begin{lemma}\label{lem:mean-median}
  If $N_t$ is a median of $v_t$ then
  \[
    m(f) = \E f (1 - \E f) \le 2\exp\Big(
      -\frac{N_t^2}{2 (1 + k_t)^2}
    \Big).
  \]
\end{lemma}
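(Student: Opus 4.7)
The plan is to exploit the Lipschitz regularity of $v_t$ from Lemma~\ref{lem:grad-bound-v} together with the standard Mills-ratio tail bound $1 - \Phi(s) \le e^{-s^2/2}$ for the Gaussian CDF, combined with Gaussian concentration for Lipschitz functions around the median. The bound $m(f) = \E f(1-\E f) \le \min(\E f, 1-\E f)$ means it suffices to control whichever of $\E f$ or $1-\E f$ is smaller.

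First I would reduce to the case $N_t \ge 0$. Replacing $f$ by $1-f$ sends $f_t$ to $1-f_t$, hence $v_t$ to $-v_t$, and thus negates the median; it also preserves $m$, so we may assume $N_t \ge 0$. Because $P_t$ has $\gamma_n$ as an invariant measure, $\E f_t = \E f$, so writing $f_t = \Phi(v_t)$ gives
\[
  m(f) \;\le\; 1 - \E f \;=\; \E\bigl(1 - \Phi(v_t(X))\bigr),
\]
and it is enough to bound this last quantity by $2 \exp\!\bigl(-N_t^2/(2(1+k_t)^2)\bigr)$.

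By Lemma~\ref{lem:grad-bound-v}, $v_t$ is $k_t$-Lipschitz. The standard Gaussian concentration inequality for Lipschitz functions around the median yields
\[
  \Pr(v_t(X) < N_t - r) \le e^{-r^2/(2 k_t^2)} \quad \text{for all } r \ge 0,
\]
while the Chernoff bound for a standard Gaussian gives $1 - \Phi(s) \le e^{-s^2/2}$ for $s \ge 0$. For any $s \in [0, N_t]$ I would then split according to whether $v_t(X) < s$ or $v_t(X) \ge s$:
\[
  \E\bigl(1 - \Phi(v_t(X))\bigr)
  \;\le\; \Pr\bigl(v_t(X) < s\bigr) + \bigl(1 - \Phi(s)\bigr)
  \;\le\; e^{-(N_t - s)^2/(2 k_t^2)} + e^{-s^2/2}.
\]

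The last step is to balance the two exponentials. Choosing $s = N_t/(1+k_t)$ makes $(N_t - s)/k_t = s$, so both exponents equal $-N_t^2/(2(1+k_t)^2)$ and the two terms sum to $2 \exp(-N_t^2/(2(1+k_t)^2))$, which is exactly the claimed bound. There is no real obstacle here; the only nontrivial design choice is this splitting point, which is precisely what produces the somewhat unusual denominator $(1+k_t)^2$ (rather than, say, $k_t^2$ or $1$) in the exponent.
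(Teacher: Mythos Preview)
Your proof is correct. Both your argument and the paper's ultimately rest on the $k_t$-Lipschitz bound for $v_t$ (Lemma~\ref{lem:grad-bound-v}) together with the Mills-ratio bound $1-\Phi(s)\le e^{-s^2/2}$, and both handle the sign of $N_t$ by the symmetry $f\mapsto 1-f$. The routes differ in how the pieces are assembled: the paper quotes an external result (Lemma~3.8 of~\cite{MosselNeeman:12}) stating that the median $M_t$ of $f_t$ controls $\E f$ via $\E f \le 2 M_t^{1/(1+k_t)^2}$, and then substitutes $M_t=\Phi(N_t)\le e^{-N_t^2/2}$; you instead work directly with $v_t$, combining Gaussian concentration around the median with a threshold split at $s=N_t/(1+k_t)$, which balances the two exponentials and makes the appearance of $(1+k_t)^2$ in the exponent completely transparent. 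Your argument is more self-contained (no appeal to~\cite{MosselNeeman:12} is needed), while the paper's version has the modular advantage of isolating the median-to-mean comparison as a separate reusable statement about $f_t$.
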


\begin{proof}[Proof of Lemma~\ref{lem:mean-median}]
  Lemma 3.8 of~\cite{MosselNeeman:12} proved that
  if $M_t$ is a median of $f_t$ then
  \[
    \E f \le 2M_t^{\left(\frac{1}{1+k_t}\right)^2}.
  \]
  Recall that $f_t = \Phi \circ v_t$
  and so $M_t = \Phi(N_t)$. Suppose first that $N_t \le 0$. Since
  $\Phi(-x) \le e^{-x^2/2}$, we see that
  $M_t \le e^{-N_t^2/2}$ and so
  \begin{equation}\label{eq:mean-median}
    \E f \le 2\exp\Big(
      -\frac{N_t^2}{2 (1 + k_t)^2}
    \Big).
  \end{equation}
  On the other hand, if $N_t > 0$, we apply the preceding argument
  to $1-f$ and we conclude that
  \begin{equation}\label{eq:mean-median-1-f}
    \E (1-f) \le 2\exp\Big(
      -\frac{N_t^2}{2 (1 + k_t)^2}
    \Big).
  \end{equation}
  Of course, $\max\{\E f, 1 - \E f\} \le 1$ and so we can
  combine~\eqref{eq:mean-median} and~\eqref{eq:mean-median-1-f} to
  prove the second claim of the lemma.
\end{proof}

\begin{proof}[Proof of Proposition~\ref{prop:2-funcs-holder}]
  We begin by applying the reverse-H\"older inequality~\eqref{eq:rev-holder}
  to the equation in Lemma~\ref{lem:diff-R_t}:
  \begin{equation}\label{eq:after-rev-holder}
    \diff{R_t}{t} \ge
    \frac{\rho}{2\pi \sqrt{1-\rho^2}}
    \left(\E_\rho\exp
      \Big(\beta\frac{v_t^2 + w_t^2 - 2 \rho v_t w_t}{2(1-\rho^2)}\Big)
    \right)^{-1/\beta}
    \Big(\E_\rho |\grad v_t - \grad w_t|^{2r}\Big)^{1/r}
  \end{equation}
  with $\beta$ and $r$ yet to be determined.
  Let us first consider the exponential term in~\eqref{eq:after-rev-holder}.
  Since $2|v_t w_t| \le v_t^2 + w_t^2$, we have
  \begin{align}
    \E_\rho\exp
      \Big(\beta\frac{v_t^2 + w_t^2 - 2 \rho v_t w_t}{2(1-\rho^2)}\Big)
    &\le
    \E_\rho\exp
      \Big(\beta\frac{v_t^2 + w_t^2}{2(1-\rho)}\Big) \notag \\
    &\le \left(
      \E\exp\Big(\beta\frac{v_t^2}{1-\rho}\Big)
      \E\exp\Big(\beta\frac{w_t^2}{1-\rho}\Big)
    \right)^{1/2}, \label{eq:bounding-exp-squared}
  \end{align}
  where we used the Cauchy-Schwarz inequality in the last line.
  Recall from Lemma~\ref{lem:grad-bound-v} that $v_t$ and $w_t$
  are both $k_t$-Lipschitz. Thus, we can
  apply Lemma~\ref{lem:lipschitz-gaussian} with $f = v_t / k_t$
  and $\lambda = 2\beta k_t^2 / (1-\rho)$; we see that
  if $\lambda = 2\beta k_t^2 / (1-\rho) \le \frac{1}{2}$, then
  \[
    \E\exp\Big(\beta\frac{v_t^2}{1-\rho}\Big) \le
    C e^{\lambda M_t^2},
  \]
  where $M_t$ is a median of $v_t$.
  Applying the same argument to $w_t$ and plugging the result
  into~\eqref{eq:bounding-exp-squared}, we have
  \[
    \E_\rho\exp
      \Big(\beta\frac{v_t^2 + w_t^2 - 2 \rho v_t w_t}{2(1-\rho^2)}\Big)
      \le C e^{\lambda(M_t^2 + N_t^2)},
  \]
  where $N_t$ is a median of $w_t$.
  Going back to~\eqref{eq:after-rev-holder}, we have
  \begin{equation}\label{eq:prop-holder-with-med}
  \diff{R_t}{t} \ge
    \frac{c \rho}{\sqrt{1-\rho^2}}
    e^{-\frac{\lambda}{\beta}(M_t^2 + N_t^2)}
    \Big(\E_\rho |\grad v_t - \grad w_t|^{2r}\Big)^{1/r},
  \end{equation}
  with (recall) $\lambda = 2\beta k_t^2 / (1 - \rho) \le \frac{1}{2}$;
  hence, $\beta \le \frac{1}{4}(1-\rho) / k_t^2$.
  Recalling that $\frac{1}{r} - \frac{1}{\beta} = 1$, we see
  that~\eqref{eq:prop-holder-with-med} holds for any
  $r < \frac{1}{1 + 4 k_t^2/(1-\rho)}$.
  Finally, we invoke Lemma~\ref{lem:mean-median}
  to show that
  \[
    \exp\Big(-\frac{\lambda}{\beta} M_t^2\Big)
    =\exp\Big(-\frac{2 k_t^2 M_t^2}{1-\rho}\Big)
    \ge (c \E f (1 - \E f))^{2\frac{k_t^2 (1+k_t)^2}{1 - \rho}}
  \]
  (and similarly for $g$ and $N_t$). Plugging this
  into~\eqref{eq:prop-holder-with-med} completes the proof.
\end{proof}

\subsection{Proof of Proposition~\ref{prop:2-funcs-large-t}}

We are now prepared to prove Proposition~\ref{prop:2-funcs-large-t}
by combining Proposition~\ref{prop:2-funcs-holder} with
Lemmas~\ref{lem:grad-bound-v}
and~\ref{lem:2-funcs-poincare}.
Besides combining these three results, there is a small
technical obstacle: we know only that the integral
of $\diff{R_t}{t}$ is small; we don't know anything about
$\diff{R_t}{t}$ at specific values of $t$. So instead of
showing that $v_t$ is close to linear for every $t$, we will show
that for every $t$, there is a nearby $t^*$ such that
$v_{t^*}$ is close to linear. By ensuring that $t^*$ is close to $t$,
we will then be able to argue that $v_t$ is also close to linear.

\begin{proof}[Proof of Proposition~\ref{prop:2-funcs-large-t}]
  For any $0 < r < 1$, Lemma~\ref{lem:grad-bound-v} implies that
  \[
    \big(\E_\rho |\grad v_t - \grad w_t|^{2r}\big)^{1/r}
    \ge \frac{\big(\E |\grad v_t - \grad w_t|^{2}\big)^{1/r}}
    {2k_t^{2(1-r)/r}}.
  \]
  By Lemma~\ref{lem:2-funcs-poincare} applied to $v_t$ and $w_t$,
  if we set $a = \frac{1}{2}(\E \grad v_t + \E \grad w_t$) and we
  define $\epsilon(v_t) = \E (v_t(X) - \inr{X}{a} - \E v)^2$
  (and similarly for $\epsilon(w_t)$), then
  \[
    (\epsilon(v_t) + \epsilon(w_t))^{1/r} \le
    \frac{k_t^{2(1-r)/r}}{1-\rho} \big(\E_\rho |\grad v_t - \grad w_t|^{2r}\big)^{1/r}.
  \]
  Now we plug this into Proposition~\ref{prop:2-funcs-holder}
  to obtain
  \begin{equation}\label{eq:large-t-diff-R_t}
    (\epsilon(v_t) + \epsilon(w_t))^{1/r} \le
    C(t, \rho) m^{\frac{1-\rho}{2 k_t^2(1+k_t)^2}}
    \diff{R_t}{t}.
  \end{equation}

  Recall that $\delta(f, g) = \int_0^\infty \diff{R_s}{s}\ ds$.
  In particular,
  \[
    \alpha t \min_{t \le s \le t(1+\alpha)} \diffat{R_t}{t}{s}
    \le \int_{t}^{t(1+\alpha)} \diff{R_s}{s} \ ds \le \delta(f, g)
  \]
  and so there is some $s \in [t, t(1 + \alpha)]$ such that
  $\diffat{R_t}{t}{s} \le \frac{\delta}{\alpha t}$.
  If we apply this to~\eqref{eq:large-t-diff-R_t}
  with $t$ replaced by $s$ and with
  $r = \frac{1}{1 + 4 k_t^2/(1-\rho)}
  \le \frac{1}{1 + 4 k_s^2/(1-\rho)}$, we obtain
  \[
    \epsilon(v_s) + \epsilon(w_s)
    \le C(t, \rho)
    m^{r\frac{1-\rho}{2 k_t^2 (1 + k_t^2)}}
    \Big(\frac{\delta}{\alpha}\Big)^r.
  \]
  Since $\Phi$ is Lipschitz, if we denote
  $\E (f_s - \Phi(\inr{X}{a} - \E v_s))^2$ by $\epsilon(f_s)$
  (and similarly for $g_s$), then we have
  \[
    \epsilon(f_s) + \epsilon(g_s)
    \le C(t, \rho)
    m^{r\frac{1-\rho}{2 k_t^2(1 + k_t^2)}}
    \Big(\frac{\delta}{\alpha}\Big)^r.
  \]
  Note that $r = \frac{1-\rho}{1-\rho + 4 k_t^2}
  \ge \frac{1-\rho}{4(1+k_t^2)}$ and so
  \begin{equation}\label{eq:2-funcs-at-s}
    \epsilon(f_s) + \epsilon(g_s)
    \le C(t, \rho)
    m^{\frac{(1-\rho)^2}{8 k_t^2(1 + k_t^2)^2}}
    \Big(\frac{\delta}{\alpha}\Big)^r.
  \end{equation}

  Now we will need a lemma to show that $\epsilon(f_t)$ and
  $\epsilon(f_t)$ are small. We will prove the lemma after this proof
  is complete.

  \begin{lemma}\label{lem:partial-pullback}
    For any $t < s$ and any $h \in L_2(\R^n, \gamma_n)$,
    \[
      \E(P_t h)^2 \le \big(\E (P_s h)^2\big)^{t/s}
      \big(\E h^2\big)^{1 - t/s}.
    \]
  \end{lemma}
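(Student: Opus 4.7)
The plan is to expand $h$ in the orthonormal Hermite basis and exploit the diagonal action of the Ornstein--Uhlenbeck semigroup on it. Since $P_t = T_{e^{-t}}$ in the notation of Section 4.1, equation~\eqref{eq:T_rho-hermite} gives $P_t H_\alpha = e^{-t|\alpha|} H_\alpha$. Hence, writing $h = \sum_\alpha h_\alpha H_\alpha$, orthonormality yields
\[
F(t) := \E (P_t h)^2 = \sum_\alpha h_\alpha^2 e^{-2t|\alpha|}.
\]
The claim is then equivalent to showing that $F(t) \le F(s)^{t/s} F(0)^{1-t/s}$ for $0 \le t \le s$, i.e., that $\log F$ is convex on $[0,\infty)$ --- or rather, the special case of that convexity applied at the point $t = \tfrac{t}{s} \cdot s + (1 - \tfrac{t}{s}) \cdot 0$.

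The inequality is a one-line application of H\"older's inequality. Setting $\lambda = t/s \in (0,1)$, I would split each summand as $h_\alpha^2 e^{-2t|\alpha|} = (h_\alpha^2)^{1-\lambda} \cdot (h_\alpha^2 e^{-2s|\alpha|})^{\lambda}$ and apply H\"older with conjugate exponents $p = 1/(1-\lambda)$ and $q = 1/\lambda$:
\[
F(t) = \sum_\alpha (h_\alpha^2)^{1-\lambda} (h_\alpha^2 e^{-2s|\alpha|})^{\lambda}
\le \left(\sum_\alpha h_\alpha^2\right)^{1-\lambda}\!\left(\sum_\alpha h_\alpha^2 e^{-2s|\alpha|}\right)^{\lambda}
= F(0)^{1 - t/s} F(s)^{t/s},
\]
which is exactly the stated inequality. (An equivalent route is to note that the Cauchy--Schwarz inequality $F(\tfrac{t_1+t_2}{2})^2 \le F(t_1)F(t_2)$ directly establishes log-convexity of $F$, a standard feature of moment-generating functions of nonnegative measures.)

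There is essentially no obstacle: the statement is a log-convexity property of a moment-generating function of the nonnegative measure $\sum_\alpha h_\alpha^2 \delta_{|\alpha|}$ on $\N$, and H\"older suffices. If one prefers a more semigroup-flavored presentation, one can alternatively use $P_t h = P_{t_1} P_{t_2} h$ with $t = t_1 + t_2$ and Cauchy--Schwarz in $L^2(\gamma_n)$ to get the same log-convexity, but the Hermite expansion is the shortest.
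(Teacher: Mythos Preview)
Your proof is correct and essentially identical to the paper's: both expand in the Hermite basis (the paper expands $P_s h$ rather than $h$, a purely cosmetic difference) and then apply H\"older's inequality with exponents $s/t$ and $s/(s-t)$, i.e., your $1/\lambda$ and $1/(1-\lambda)$. Your log-convexity framing is a pleasant way to see why this works, but the underlying computation is the same.
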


  To complete the proof of Proposition~\ref{prop:2-funcs-large-t},
  apply Lemma~\ref{lem:partial-pullback} with
  $h = f - P_s^{-1} \Phi(\inr{X}{a} - \E v_s)$ (note that
  $P_s^{-1} \Phi(\inr{X}{a} - \E v_s)$ exists
  by Lemma~\ref{lem:pull-back}, because $|a| \le k_s$).
  Since $\E h^2 \le \sup |h| \le 1$ and $s \le (1+\alpha) t$, we see that
  \[\epsilon(f_t) = \E (P_t h)^2 \le \big(\E (P_s h)^2\big)^{t/s}
    = \epsilon(f_s)^{1/(1+\alpha)}.
  \]
  Applying this (and the equivalent inequality for $g$)
  to~\eqref{eq:2-funcs-at-s}, we have
  \[
    \epsilon(f_t) + \epsilon(g_t)
    \le C(t, \rho)^{\frac{1}{1+\alpha}}
    m^{\frac{(1-\rho)^2}{8k_t^2(1 + k_t^2)^2(1+\alpha)}}
    \Big(\frac{\delta}{\alpha}\Big)^\frac{r}{1+\alpha},
  \]
  where $\epsilon(f_t)$ means
  $\E (f_t - P_{s-t}^{-1} \Phi(\inr{X}{a} - \E v_s))^2$
  and similarly for $\epsilon(g_t)$.
  Since $\alpha < 1$, $\frac{1}{2} \le \frac{1}{1+\alpha} \le 1$
  and so we can absorb the power $\frac{1}{1+\alpha}$ into the constant
  $C(t, \rho)$.
\end{proof}

\begin{proof}[Proof of Lemma~\ref{lem:partial-pullback}]
  Expand $P_s h$ in the Hermite basis as $P_s h = \sum b_\alpha H_\alpha$.
  Then
  \begin{align*}
    \E (P_s h)^2 &= \sum b_\alpha^2 \\
    \E (P_t h)^2 &= \sum b_\alpha^2 e^{2(s-t)|\alpha|} \\
    \E h^2 &= \sum b_\alpha^2 e^{2s|\alpha|}.
  \end{align*}
  By H\"older's inequality applied with the exponents
  $s/t$ and $s/(s-t)$,
  \begin{align*}
    \E (P_t h)^2
    &= \sum b_\alpha^{(s-t)/s} e^{2(s-t)|\alpha|} b_\alpha^{t/s} \\
    &\le \Big(\sum b_\alpha^2 e^{2s|\alpha|}\Big)^{(s-t)/s}
    \Big(\sum b_\alpha^2\Big)^{t/s} \\
    &= \big(\E h^2\big)^{(s-t)/s} \big(\E (P_s h)^2\big)^{t/s}.
    \qedhere
  \end{align*}
\end{proof}

\section{Robustness: time-reversal}\label{sec:time-rev}

The final step in proving Theorem~\ref{thm:2-funcs-robustness} is
to show that the conclusion of Proposition~\ref{prop:2-funcs-large-t}
implies that $f$ and $g$ are close to one of the equality cases.
In~\cite{MosselNeeman:12}, the authors used a spectral argument. However,
that spectral argument was responsible for the logarithmically
slow rates (in $\delta$)
that~\cite{MosselNeeman:12} showed. Here, we use a better
time-reversal argument that gives polynomial rates.
The argument here will need the function $f$ to take values only
in $\{0, 1\}$. Thus, we will first establish
Theorem~\ref{thm:2-funcs-robustness} for sets; having done so, it is not difficult
to extend it to functions using the equivalence, described
in Section~\ref{sec:results-robust}, between the set
and functional forms of Borell's theorem.

The main goal of a time-reversal argument is to bound $\E |h|$ from
above in terms of $\E |P_t h|$, for some function $h$. The difficulty is
that such bounds are not possible for general $h$. An illuminating example
is the function $h: \R \to \R$ given by $h(x) = \sgn(\sin(kx))$:
on the one hand, $\E |h| = 1$; on the other, $\E |P_t h|$ can be made
arbitrarily small by taking $k$ large.

The example above is problematic because there is a lot of cancellation
in $P_t h$. The essence of this section is that for the functions
$h$ we are interested in, there is a geometric reason which disallows
too much cancellation. Indeed, we are interested in functions $h$
of the form $1_A - 1_B$ where $B$ is a half-space. The negative part
of such a function is supported on $B$, while the positive part
is supported on $B^c$. As we will see, this fact allows us to bound
the amount of cancellation that occurs, and thus obtain a time-reversal
result:

\begin{proposition}\label{prop:time-rev}
  Let $B$ be a half-space and $A$ be any other set. There
  is an absolute constant $C$ such that
  for any $t > 0$,
  \[
    \gamma(A \symdiff B)
    \le C\max\Big\{
      \E |P_t 1_A - P_t 1_B|,
      (e^{2t} - 1)^{1/4} \sqrt{\E |P_t 1_A - P_t 1_B|}
    \Big\},
  \]
\end{proposition}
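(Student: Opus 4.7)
Write $\rho = e^{-t}$ and $\sigma = \sqrt{1 - e^{-2t}}$, so $(P_t f)(x) = \E f(\rho x + \sigma Z)$ for $Z \sim \gamma_n$. The plan is to bound $M := \gamma(A \symdiff B)$ by testing $P_t h$, where $h = 1_A - 1_B$ and $m := \E|P_t h|$, against a bounded one-dimensional function aligned with the half-space $B$. After a rotation, assume $B = \{x : x_1 \le c\}$; then the positive part $h^+ = 1_{A\cap B^c}$ is supported on $\{x_1 > c\}$ and the negative part $h^- = 1_{A^c\cap B}$ on $\{x_1 \le c\}$, so $h(x)\,\sgn(x_1 - c) = |h(x)|$ pointwise.

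The key step is the choice of test function $g(y) := \sgn(y_1 - \rho c)$, which depends only on $y_1$ and has $\|g\|_\infty = 1$. A direct calculation gives $(P_t g)(x) = 2\Phi\bigl(\rho(x_1-c)/\sigma\bigr) - 1$; the shift by $\rho c$ is tailored precisely so that $P_t g$ changes sign at the boundary of $B$, i.e.\ $(P_t g)(x)$ has the same sign as $x_1 - c$. Combined with the identity of the previous paragraph this gives the pointwise equality $(P_t g)(x)\, h(x) = |P_t g(x)|\, |h(x)|$. By the self-adjointness of $P_t$ in $L^2(\gamma_n)$ together with $\|g\|_\infty = 1$,
\[
\int |P_t g|\,|h|\,d\gamma_n \;=\; \int g\,(P_t h)\,d\gamma_n \;\le\; m.
\]
Since $|P_t g|$ depends only on $x_1$, the left-hand side equals $\int\bigl|2\Phi(\rho(x_1-c)/\sigma) - 1\bigr|\,|H|\,\phi\,dx_1$, where $|H(x_1)| := \E[|h(X)|\mid X_1 = x_1] \in [0,1]$ and $\int|H|\phi = M$.

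The last step is to turn this into a lower bound on $M$. Split the integral at $|x_1 - c| = \tau$: on the outer region $|2\Phi(\rho(x_1-c)/\sigma) - 1| \ge q(\tau) := 2\Phi(\rho\tau/\sigma) - 1$, while on the inner strip $|H| \le 1$ costs only a Gaussian mass of order $\tau\phi(c)$, producing $m \ge q(\tau)\bigl(M - C\tau\phi(c)\bigr)$. When $M$ is small enough that the optimal choice $\tau^* \sim M/\phi(c)$ still lies in the regime $\rho\tau^*/\sigma \le 1$, the linearization $q(\tau) \ge c_0\,\rho\tau/\sigma$ yields $m \ge c\,\rho M^2/(\sigma\phi(c))$, hence $M \le C\sqrt{\sigma m/\rho} = C(e^{2t}-1)^{1/4}\sqrt{m}$ (using $(e^{2t}-1)^{1/2} = \sigma/\rho$ and absorbing $\phi(c) \le \phi(0)$ into the constant). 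In the complementary regime, the choice $\tau_0 = \sigma/\rho$ makes $q(\tau_0) = 2\Phi(1) - 1$ a positive constant and $\tau_0\phi(c) \le M/2$, giving $M \le Cm$. Combining the two regimes yields the proposition.

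The creative step is the choice of $g$: without the shift by $\rho c$, the function $P_t g$ would not change sign at the boundary of $B$, the pointwise identity $(P_t g)\, h = |P_t g|\, |h|$ would fail, and the duality bound $\int g\,(P_t h)\,d\gamma_n \le m$ would not convert into a useful lower bound on $m$ in terms of $M$. Once $g$ is in hand, the rest is a direct one-dimensional Gaussian estimate.
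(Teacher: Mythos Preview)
Your argument is correct and rests on the same idea as the paper's, repackaged as a duality estimate. The paper isolates a mass-leakage lemma (Lemma~\ref{lem:time-rev}): if $f\ge 0$ is supported on $\{x_1\le b\}$, then at most $\tfrac12\E f - c(\E f)^2/\sqrt{e^{2t}-1}$ of the mass of $P_tf$ can sit on $\{x_1\ge e^{-t}b\}$; this is proved by self-adjointness, $\E(P_tf)1_{\{x_1\ge e^{-t}b\}}=\E f\,\Phi\big((X_1-b)/\sqrt{e^{2t}-1}\big)$, followed by a strip split of width $\E f$ near the boundary. The lemma is then applied separately to $h^+$ and $h^-$ and combined through $\E|P_th|\ge\E|h|-2\E\min\{P_th^+,P_th^-\}\ge\E|h|-2\E(1_BP_th^+ + 1_{B^c}P_th^-)$. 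Your single test function $g(y)=\sgn(y_1-\rho c)=2\cdot 1_{\{y_1\ge e^{-t}c\}}-1$ is exactly the signed combination of the two half-space indicators the paper uses, and your shift by $\rho c=e^{-t}c$ is the same shift $b\mapsto e^{-t}b$ appearing in Lemma~\ref{lem:time-rev}; the strip-splitting step is likewise the same. Your version handles both signs in one stroke and is slightly more streamlined; the paper's version isolates a reusable lemma. One small correction: the Gaussian mass of $\{|x_1-c|\le\tau\}$ is bounded by $2\tau\phi(0)$, not in general by $C\tau\phi(c)$ (the latter can fail when $|c|$ is large and $\tau$ is not tiny). Since you end by absorbing $\phi(c)\le\phi(0)$ anyway, using $\phi(0)$ from the outset fixes this and leaves the conclusion unchanged.
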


The main idea in Proposition~\ref{prop:time-rev} is in the following
lemma, which states that
if a non-negative function is supported on a half-space then $P_t$
will push strictly less than half of its mass onto the
complementary half-space.

\begin{lemma}\label{lem:time-rev}
  There is a constant $c > 0$ such that
  for any $b \in \R$,
  if $f: \R^n \to [0, 1]$ is supported on
  $\{x_1 \le b\}$ then for any $t > 0$,
  \[
    \E (P_t f) 1_{\{X_1 \ge e^{-t} b\}}
    \le \max\Big\{ \frac{1}{2} \E f - c \frac{(\E f)^2}{\sqrt{e^{2t} - 1}},
      \frac{3}{8} \E f
    \Big\}.
  \]
\end{lemma}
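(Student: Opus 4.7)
The approach is to reduce to a one-dimensional problem in the law of $Y_1$ and then carry out a constrained optimization. First, write $(P_t f)(X) = \E[f(Y)\mid X]$ where $Y = e^{-t} X + \sqrt{1-e^{-2t}}\, Z$ for an independent standard Gaussian $Z$, so that $(X, Y)$ is jointly Gaussian with $\E X_i Y_j = \rho\, \delta_{ij}$ for $\rho = e^{-t}$. Conditioning on $Y_1$ and using $X_1 \mid Y_1 \sim \mathcal{N}(\rho Y_1,\, 1-\rho^2)$ gives
\[
  \E (P_t f)(X)\, 1_{\{X_1 \ge \rho b\}}
  = \E\!\left[f(Y)\, \Phi\!\left(\tfrac{\rho(Y_1 - b)}{\sqrt{1-\rho^2}}\right)\right]
  = \tfrac{1}{2} m - D,
\]
where $m = \E f$, $\sigma := \sqrt{e^{2t}-1} = \sqrt{1-\rho^2}/\rho$, $G := \Phi - \tfrac{1}{2}$, and
\[
  D := \int_{-\infty}^{b} F(y)\, G\!\left(\tfrac{b-y}{\sigma}\right) \phi(y)\, dy,\qquad
  F(y) := \E[f(Y)\mid Y_1 = y].
\]
Since $f$ is supported on $\{Y_1 \le b\}$, the function $F:(-\infty,b]\to[0,1]$ satisfies $\int_{-\infty}^b F\phi = m$. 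It therefore suffices to prove $D \ge \min\{m/8,\ c\, m^2/\sigma\}$ for an absolute constant $c > 0$.

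To bound $D$, I will use the elementary inequality $G(s) \ge c_1 \min(s,1)$ for $s \ge 0$, where $c_1 = \Phi(1) - \tfrac{1}{2} > \tfrac{1}{4}$; this follows from the concavity of $G$ on $[0,\infty)$ together with $G(0)=0$. Changing variables to $s = (b-y)/\sigma$ and writing $\nu(s) := F(b - \sigma s)\, \phi(b - \sigma s)$, we get $D = \sigma \int_0^\infty \nu(s) G(s)\, ds$, subject to $0 \le \nu(s) \le \phi(b - \sigma s) \le 1/\sqrt{2\pi}$ and $\sigma \int_0^\infty \nu(s)\, ds = m$. Setting $A := \sigma\int_0^1 s\,\nu(s)\, ds$, $B := \sigma\int_1^\infty \nu(s)\, ds$, and $C := \sigma\int_0^1 \nu(s)\, ds$, we have $B + C = m$ and
\[
  D \ge c_1\, \sigma \int_0^\infty \nu(s)\, \min(s,1)\, ds = c_1\,(A + B).
\]

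The argument then splits according to whether $B \ge m/2$. If $B \ge m/2$, then $D \ge c_1 B \ge c_1 m/2 \ge m/8$ and we are done. Otherwise $C > m/2$, and the pointwise cap $\nu \le 1/\sqrt{2\pi}$ forces $C \le \sigma/\sqrt{2\pi}$, hence $m < 2\sigma/\sqrt{2\pi}$. A rearrangement argument then minimizes $A$ over admissible $\nu$ on $[0,1]$ with $\sigma\int_0^1 \nu \ge m/2$: the minimum is attained by $\nu = \tfrac{1}{\sqrt{2\pi}}\, \mathbf{1}_{[0, s_0]}$ with $s_0 = m\sqrt{2\pi}/(2\sigma)$, which satisfies $s_0 \le 1$ by the regime condition. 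This yields
\[
  A \ge \sigma \cdot \tfrac{1}{\sqrt{2\pi}} \cdot \tfrac{s_0^2}{2} = \tfrac{\sqrt{\pi/2}}{4} \cdot \tfrac{m^2}{\sigma},
\]
so $D \ge c_1 A \ge c\, m^2/\sigma$. In both cases $D \ge \min\{m/8,\ c\, m^2/\sigma\}$, which rearranges into the stated bound.

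The main technical point is the interplay between the pointwise bound $\nu(s) \le \phi(b - \sigma s) \le 1/\sqrt{2\pi}$ and the normalization $\sigma\int\nu = m$: the cap rules out concentrating the mass of $F$ arbitrarily close to $b$, and it is precisely this geometric constraint that produces the $m^2/\sigma$ lower bound on $D$. The extremal configuration is essentially $F = \mathbf{1}_{[b - \sigma s_0, b]}$, which matches the quadratic scaling in the conclusion of the lemma.
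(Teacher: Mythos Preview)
Your proof is correct and follows essentially the same route as the paper: both reduce (via self-adjointness of $P_t$, which is exactly what your conditioning on $Y_1$ amounts to) to the one-dimensional quantity $\E f(X)\,\Phi\bigl((X_1-b)/\sigma\bigr)$ and then exploit the pointwise cap $F\phi\le\phi(0)$ to rule out concentration of the mass of $f$ near the boundary $x_1=b$. The only difference is cosmetic---the paper splits the support at the fixed point $y=b-\E f$ and bounds $\Phi$ directly on each piece, whereas you split at $s=1$ (i.e.\ $y=b-\sigma$) and handle the near-boundary piece with a bathtub rearrangement---but the two arguments are interchangeable and yield the same two-case dichotomy.
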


\begin{proof}
  Because $P_t$ is self-adjoint,
  \[
    \E (P_t f) 1_{\{X_1 \ge e^{-t} b\}}
    = \E f P_t 1_{\{X_1 \ge e^{-t} b\}}
    = \E f \Phi\left(\frac{X_1-b}{\sqrt{e^{2t} - 1}}\right).
  \]
  Now, the set $\{b - \E f \le x_1 \le b\}$ has measure
  at most $\phi(0) \E f$. In particular,
  $\E f 1_{\{b - \E f \le x_1 \le b\}} \le \phi(0) \E f \le \frac{1}{2} \E f$.

  Let $A = \{x_1 \le b - \E f\}$ and $B = \{b - \E f \le x_1 \le b\}$
  and recall that $f$ is supported on $\{x_1 \le b\}$, so that
  $f = f (1_A + 1_B)$.
  Now, 
  \[
    \Phi\left(\frac{x_1-b}{\sqrt{e^{2t} - 1}}\right) \le
    \begin{cases}
      \Phi\big(-\frac{\E f}{\sqrt{e^{2t} -1}}\big) & x \in A \\
      \frac{1}{2} & x \in B
    \end{cases}
  \]
  and so
  \begin{align}
    \E f \Phi\left(\frac{X_1-b}{\sqrt{e^{2t} - 1}}\right)
    &= \E 1_A f \Phi\left(\frac{X_1-b}{\sqrt{e^{2t} - 1}}\right)
    + \E 1_B f \Phi\left(\frac{X_1-b}{\sqrt{e^{2t} - 1}}\right) \notag \\
    &\le \Phi\left(-\frac{\E f}{\sqrt{e^{2t} - 1}}\right) \E 1_A f
    + \frac{1}{2} \E 1_B f  \notag\\
    &= \frac{1}{2} \E f - \left(\frac{1}{2} - \Phi\Big(-\frac{\E f}{\sqrt{e^{2t} - 1}}\Big)\right)
    \E f 1_A.
    \label{eq:lem-time-rev-1}
  \end{align}
  There is a constant $c > 0$ such that for all $x \ge 0$,
  $\Phi(-x) \le \max\{\frac{1}{2} - c x, \frac{1}{4}\}$.
  Applying this with $x = \frac{\E f}{\sqrt{e^{2t} - 1}}$, we have
  \[
    \eqref{eq:lem-time-rev-1}
    \le \frac{1}{2} \E f - \E f 1_A \min\Big\{
      c \frac{\E f}{\sqrt{e^{2t} - 1}}, \frac{1}{4}
    \Big\}
    \le \max\Big\{ \frac{1}{2} \E f - c \frac{(\E f)^2}{\sqrt{e^{2t} - 1}},
      \frac{3}{8} \E f
    \Big\}
  \]
  where in the last inequality, we recalled that $\E f 1_A \ge \frac{1}{2}
  \E f$.
\end{proof}

\begin{proof}[Proof of Proposition~\ref{prop:time-rev}]
  Without loss of generality, $B$ is the half-space $\{x_1 \le b\}$.
  Let $f$ be the positive part of $1_A - 1_B$ and let $g$
  be the negative part, so that $\gamma(A \symdiff B) = \E f + \E g$.
  Note that $f$ is supported on $B^c$ and $g$ is supported on $B$.

  Without loss of generality, $\E f \ge \E g$;
  Lemma~\ref{lem:time-rev} implies that if
  $\E f \le C \sqrt{e^{2t} - 1}$ then
  \begin{equation}\label{eq:time-rev-1}
    2 \E (1_B P_t f + 1_{B^c} P_t g)
    \le \E f + \E g - c \frac{(\E f + \E g)^2}{\sqrt{e^{2t} - 1}}.
  \end{equation}
  On the other hand, if
  $\E f \ge C \sqrt{e^{2t} - 1}$ then
  \begin{equation}\label{eq:time-rev-1a}
    2 \E (1_B P_t f + 1_{B^c} P_t g)
    \le \frac{3}{4} \E f + \E g \le \frac{7}{8} (\E f + \E g).
  \end{equation}

  Thus,
  \begin{align*}
    \E |P_t f - P_t g|
    &= \E P_t f + \E P_t g - 2 \E \min\{P_t f, P_t g\} \\
    &= \E f + \E g - 2 \E \min\{P_t f, P_t g\} \\
    &\ge \E f + \E g - 2 \E (1_B P_t f + 1_{B^c} P_t g) \\
    &\ge \min\Big\{c \frac{(\E f + \E g)^2}{\sqrt{e^{2t} - 1}},
                   \frac{\E f + \E g}{8}\Big\},
  \end{align*}
  Where we have applied~\eqref{eq:time-rev-1} and~\eqref{eq:time-rev-1a}
  in the last inequality. Now there are two cases, depending on which
  term in the minimum is smaller: if the first term is smaller then
  \[
   \E f + \E g
   \le C (e^{2t} - 1)^{1/4} \sqrt{\E |P_t f - P_t g|};
  \]
  otherwise, the second term in the minimum is smaller and
  \[
   \E f + \E g \le 8 \E |P_t f - P_t g|.
  \]
  In either case,
  \[
    \gamma(A \symdiff B) = \E f + \E g
    \le C\max\Big\{
      \E |P_t f - P_t g|,
      (e^{2t} - 1)^{1/4} \sqrt{\E |P_t f - P_t g|}
    \Big\},
  \]
  as claimed.
\end{proof}

\subsection{Synchronizing the time-reversal}

Proposition~\ref{prop:time-rev} would be enough if we knew that
$\E (P_t 1_A - P_t 1_B)^2$ were small. Now,
Proposition~\ref{prop:2-funcs-large-t} and
Lemma~\ref{lem:pull-back} imply
that $\E (P_t 1_A - P_{t+s} 1_B)^2$ is small, for some $s \ge 0$.
In this section, we will show that if $e^{-t} = \rho$ then $s$ must
be small. Now, this is not necessarily the case for arbitrary sets
$A$; in fact, for any $s > 0$ one can find $A$ such that
$\E (P_t 1_A - P_{t+s} 1_B)^2$ is arbitrarily small. Fortunately,
we have some extra information on $A$: we know that it is
almost optimally noise stable with parameter $\rho$. In particular,
if $e^{-t} = \rho$ then $\E 1_A P_t 1_A$ is close to
$\E 1_B P_t 1_B$.

Using this extra information, the proof of robustness
proceeds as follows:
since $\E 1_A P_t 1_A$ is close to $\E 1_B P_t 1_B$ and $P_t 1_A$
is close to $P_{t+s} 1_B$, we will show that
$\E 1_B P_{t+s} 1_B$ is close to $\E 1_B P_t 1_B$. But we know
all about $B$: it is a half-space. Therefore, we can find
explicit and accurate estimates for $\E 1_B P_{t+s} 1_B$
and $\E 1_B P_t 1_B$ in terms of $t$, $s$ and $\gamma_n(B)$;
using them, we can conclude that $s$ is
small. Now, if $s$ is small then
we can show (again, using explicit estimates)
that $\E (P_t 1_B - P_{t+s} 1_B)^2$ is small. Since
$\E (P_t 1_A - P_{t+s} 1_B)^2$ is small (this was our starting point,
remember),
we can apply the triangle inequality to conclude
that $\E (P_t 1_A - P_t 1_B)^2$ is small.
Finally, we can apply Proposition~\ref{prop:time-rev}
to show that $\E |1_A - 1_B|$ is small.

\begin{proposition}\label{prop:small-s}
  For every $t$, there is a $C(t)$ such that the following holds.
  For sets $A, A' \subset \R^n$, suppose that $B, B' \subset \R^n$
  are parallel half-spaces with
  $\gamma(A) = \gamma(B)$, $\gamma(A') = \gamma(B')$.
  If there exist $s, \epsilon_1, \epsilon_2 > 0$ such that
  \[
    \E (P_t 1_A - P_{t+s} 1_B)^2 \le \epsilon_1^2
  \]
  and
  \[
    \E 1_A P_t 1_{A'} \ge \E 1_B P_t 1_{B'} - \epsilon_2
  \]
  then
  \[
    \big(\E (P_t 1_A - P_t 1_B)^2\big)^{1/2}
    \le C(t) \frac{\epsilon_1 + \epsilon_2}
    {\big(I(\gamma(A)) I(\gamma(A'))\big)^{C(t)}},
  \]
  where $I(x) = \phi(\Phi^{-1}(x))$.
\end{proposition}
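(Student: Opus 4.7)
The plan is to first use the two hypotheses together with Borell's theorem to argue that $s$ must be small (controlled by $\epsilon_1+\epsilon_2$), then use the smoothness of $\tau \mapsto P_\tau 1_B$ for the half-space $B$ to control $\|P_t 1_B - P_{t+s} 1_B\|_2$, and close with a triangle inequality. This follows the outline given in the paragraph before the proposition.

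To bound $s$, I would combine the two hypotheses as follows. Since $P_t$ is self-adjoint on $L^2(\gamma_n)$, writing $P_t 1_A = P_{t+s} 1_B + (P_t 1_A - P_{t+s} 1_B)$ and applying Cauchy--Schwarz with $\|1_{A'}\|_2 \le 1$ gives $\E 1_A P_t 1_{A'} \le \E 1_{A'} P_{t+s} 1_B + \epsilon_1$. Borell's theorem (Theorem~\ref{thm:borell}) applied to the pair $(A', B)$ at correlation $e^{-(t+s)}$ bounds $\E 1_{A'} P_{t+s} 1_B \le \E 1_{B'} P_{t+s} 1_B$, since $B$ is already a half-space and $B'$ is the parallel half-space of measure $\gamma(A')$. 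Combining with the second hypothesis yields
\[
F(t) - F(t+s) \le \epsilon_1 + \epsilon_2,
\qquad \text{where } F(\tau) := J(\gamma(B), \gamma(B'); e^{-\tau}).
\]

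To convert this into a bound on $s$, I would use the classical identity that the derivative of the bivariate Gaussian CDF in the correlation equals the bivariate density: with $\alpha = \Phi^{-1}(\gamma(A))$, $\beta = \Phi^{-1}(\gamma(A'))$,
\[
-F'(\tau) = e^{-\tau}\, \phi_2(\alpha,\beta; e^{-\tau}),
\qquad
\phi_2(\alpha,\beta;\rho) = \frac{1}{2\pi\sqrt{1-\rho^2}} \exp\Big(-\frac{\alpha^2+\beta^2-2\rho\alpha\beta}{2(1-\rho^2)}\Big).
\]
The elementary estimate $2\rho|\alpha\beta| \le \rho(\alpha^2+\beta^2)$, combined with the restriction $\rho \le e^{-t}$, produces a lower bound of the form
\[
\phi_2(\alpha,\beta;\rho) \ge c(t)\, \phi(\alpha)^{c(t)} \phi(\beta)^{c(t)} = c(t)\, I(\gamma(A))^{c(t)}\, I(\gamma(A'))^{c(t)}
\]
for all $\tau \in [t, t+s]$, and integrating yields $s \le C(t)\,(\epsilon_1+\epsilon_2)/(I(\gamma(A))\, I(\gamma(A')))^{c(t)}$.

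Finally, since $(P_\tau 1_B)(x) = \Phi\big((b - e^{-\tau} x_1)/\sqrt{1-e^{-2\tau}}\big)$ depends only on $x_1$ and is smooth in $\tau$ for $\tau \ge t > 0$, a direct differentiation gives $\|\partial_\tau P_\tau 1_B\|_2 \le C(t)(1+|b|)$; and since $|b|^2 \le C + 2|\log I(\gamma(A))|$, the factor $1+|b|$ is absorbed into an arbitrarily small polynomial power of $1/I(\gamma(A))$. This yields $\|P_t 1_B - P_{t+s} 1_B\|_2 \le C(t)\, s\, (I(\gamma(A)))^{-\epsilon}$, and the triangle inequality together with the first hypothesis then closes the argument. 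I expect the main technical difficulty to be the explicit lower bound on $\phi_2(\alpha,\beta;\rho)$, as it dictates the exponent $C(t)$ appearing in the final denominator; the crude bound sketched above suffices because $C(t)$ is permitted to depend only on $t$, though a sharper bound would improve the dependence on $I(\gamma(A)) I(\gamma(A'))$.
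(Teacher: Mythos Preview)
Your proposal is correct and follows essentially the same route as the paper: Cauchy--Schwarz to pass from $P_t 1_A$ to $P_{t+s}1_B$, then comparing $\E 1_{A'}P_{t+s}1_B$ with $\E 1_{B'}P_{t+s}1_B$, then using the explicit formula for $F(\tau)=\E 1_B P_\tau 1_{B'}$ to convert the resulting smallness of $F(t)-F(t+s)$ into smallness of $\|P_t1_B-P_{t+s}1_B\|_2$, and closing with the triangle inequality. The paper packages the last conversion as a separate Lemma (Lemma~\ref{lem:norm-noise-sens}), where it also handles the case of large $s$ by a simple dichotomy; you should add this case split, since your lower bound on $-F'(\tau)$ carries a factor $e^{-\tau}$ and hence does not give a uniform bound on $s$ directly. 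The only other difference is that for the step $\E 1_{A'}P_{t+s}1_B\le\E 1_{B'}P_{t+s}1_B$ you invoke Borell's theorem, whereas the paper uses the more elementary observation that $B'$, being a half-space parallel to $B$ with $\gamma(B')=\gamma(A')$, is exactly the super-level set of $P_{t+s}1_B$ of measure $\gamma(A')$, so the bathtub principle suffices.
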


Rather than prove Proposition~\ref{prop:small-s} all at once,
we have split the part relating
$\E (P_t 1_B - P_{t+s} 1_B)^2$
and $\E 1_B (P_t 1_{B'} - P_{t+s} 1_{B'})$ into a separate lemma.

\begin{lemma}\label{lem:norm-noise-sens}
  For every $t$
  there is a $C(t)$ such that
  for any parallel half-spaces $B$ and $B'$, and for every $s > 0$,
  \[
    \big(\E (P_t 1_B - P_{t+s} 1_B)^2\big)^{1/2}
    \le C(t) \frac{\E 1_B (P_t 1_{B'} - P_{t+s} 1_{B'})}
    {\big(I(\gamma(B)) I(\gamma(B'))\big)^{C(t)}}.
  \]
\end{lemma}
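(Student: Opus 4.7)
The plan is a direct one-dimensional computation. I will express both sides of the inequality as integrals of the bivariate Gaussian density, then extract the $(II')^{-C(t)}$ factor from a pointwise lower bound on that density. Since $B$ and $B'$ are parallel half-spaces, a rotation reduces matters to $n=1$ with $B=\{x\le b\}$ and $B'=\{x\le b'\}$. Writing $\sigma=e^{-t}$, $\alpha=e^{-s}$, and using the self-adjointness of $P_r$ together with the definition of $J$, one computes
\begin{align*}
R(s)&:=\E 1_B(P_t 1_{B'}-P_{t+s}1_{B'})=H(\sigma)-H(\sigma\alpha),\\
L(s)&:=\E(P_t 1_B-P_{t+s}1_B)^2=G(\sigma^2)-2G(\sigma^2\alpha)+G(\sigma^2\alpha^2),
\end{align*}
where $G(\rho)=J(\Phi(b),\Phi(b);\rho)$ and $H(\rho)=J(\Phi(b),\Phi(b');\rho)$. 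By the identity $\partial_\rho J(u,u';\rho)=\phi_2(\Phi^{-1}(u),\Phi^{-1}(u');\rho)$, the derivatives $G'$ and $H'$ are explicit Gaussian densities.

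The key manipulation is a change of variable $\rho=\alpha v$ in the third term of $L(s)$: starting from $L(s)=\int_{\sigma^2\alpha}^{\sigma^2}G'\,d\rho-\int_{\sigma^2\alpha^2}^{\sigma^2\alpha}G'\,d\rho$ and substituting collapses both integrals to the common interval $[\sigma^2\alpha,\sigma^2]$, giving
\[
L(s)=\int_{\sigma^2\alpha}^{\sigma^2}\bigl[G'(v)-\alpha G'(\alpha v)\bigr]\,dv.
\]
Writing the integrand as $(1-\alpha)G'(v)+\alpha(G'(v)-G'(\alpha v))$ and using $|G'(v)-G'(\alpha v)|\le v(1-\alpha)\sup_{[0,\sigma^2]}|G''|$, I obtain $L(s)\le \sigma^2(1-\alpha)^2 C_1(t,b)$ with $C_1(t,b):=\sup_{[0,\sigma^2]}G'+\sigma^2\sup_{[0,\sigma^2]}|G''|$. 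The complementary bound $R(s)\ge \sigma(1-\alpha)\inf_{[0,\sigma]}H'$ is immediate from $R(s)=\int_{\sigma\alpha}^{\sigma}H'(\rho)\,d\rho$, and the matching factors $\sigma(1-\alpha)$ cancel in $\sqrt{L(s)}/R(s)$, reducing the lemma to the two uniform-in-$b$ estimates $C_1(t,b)\le C(t)$ and $\inf_{[0,\sigma]}H'(\rho)\ge c(t)(II')^{C(t)}$, where $I=I(\gamma(B))$ and $I'=I(\gamma(B'))$.

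The hardest part, and the source of the $(II')^{-C(t)}$ loss, is the density lower bound. From $|2\rho bb'|\le \rho(b^2+b'^2)$ one gets $(b^2-2\rho bb'+b'^2)/(2(1-\rho^2))\le (b^2+b'^2)/(2(1-\rho))$, and combined with $e^{-b^2/2}=\sqrt{2\pi}\,I$ this produces
\[
H'(\rho)=\phi_2(b,b';\rho)\ge \frac{1}{2\pi\sqrt{1-\rho^2}}(2\pi II')^{1/(1-\rho)}.
\]
For $\rho\in[0,\sigma]$ the exponent $1/(1-\rho)$ is bounded by $1/(1-e^{-t})=:C(t)$, and since $2\pi II'\le 1$ this yields $\inf_{[0,\sigma]}H'\ge c(t)(II')^{C(t)}$. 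The uniform bound $C_1(t,b)\le C(t)$ is easier: $G'(\rho)\le (2\pi\sqrt{1-\rho^2})^{-1}$, while $|G''(\rho)|=G'(\rho)|\rho/(1-\rho^2)+b^2/(1+\rho)^2|$ together with the elementary inequality $b^2 e^{-b^2/(1+\rho)}\le (1+\rho)/e$ (a rearrangement of $xe^{-x}\le 1/e$) bounds $|G''|$ on $[0,\sigma^2]$ by a constant depending only on $t$. Assembling these estimates finishes the proof; the main technical obstacle is handling the cross term $-2\rho bb'$ (which may have either sign), and the symmetric AM-GM-type bound $|2\rho bb'|\le \rho(b^2+b'^2)$ is precisely what makes the final estimate on $H'$ polynomial and symmetric in $I$ and $I'$.
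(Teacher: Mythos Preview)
Your proof is correct, and it follows a genuinely different (and in some ways cleaner) route than the paper's.

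The paper parametrizes everything by the time variable: it sets $F_{ab}(\tau)=\E 1_B P_\tau 1_{B'}$, computes $F_{ab}'(\tau)$ explicitly, and then bounds the left-hand side via the second-difference identity $\E(P_t 1_B-P_{t+s}1_B)^2=F_{aa}(2t)-2F_{aa}(2t+s)+F_{aa}(2t+2s)\le 2s^2 F_{aa}''(2t)$ (using monotonicity of $F_{aa}'$ and $F_{aa}''$). For the right-hand side it bounds $F_{ab}(t)-F_{ab}(t+s)$ from below by $-s\max_{[t,t+s]}F_{ab}'$, but because $s k_{t+s}\to 0$ as $s\to\infty$ this forces a case split into $s\le 1$ and $s>1$. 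You instead parametrize by the correlation $\rho=e^{-\tau}$, work with $G(\rho)$ and $H(\rho)$, and use the substitution $\rho=\alpha v$ to write $L(s)$ as a single integral over $[\sigma^2\alpha,\sigma^2]$. Both your upper bound on $L(s)$ and your lower bound on $R(s)$ then carry a factor $\sigma(1-\alpha)$, which cancels and makes the argument uniform in $s$; no case split is needed. The pointwise density lower bound that produces the $(II')^{C(t)}$ loss is essentially the same in both proofs (the paper also uses $|2\rho bb'|\le \rho(b^2+{b'}^2)$), and your use of $xe^{-x}\le 1/e$ to control $b^2 G'(\rho)$ plays the same role as the paper's explicit bound on $F_{aa}''(2t)$. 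Net effect: your argument trades the paper's monotonicity observations and case analysis for a single change of variable, which is a bit more transparent, while yielding the same constants $C(t)\sim 1/(1-e^{-t})$ in the exponent.
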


\begin{proof}
  First of all, one can easily check through integration by parts
  that for a smooth function $f: \R \to \R$,
  \begin{equation}\label{eq:int-by-parts}
    \int_b^\infty \phi(x) (Lf)(x)\, dx = -f'(b)\phi(b).
  \end{equation}
  By rotating $B$ and $B'$, we can assume that $B = \{x_1 \le a\}$
  and $B' = \{x_1 \le b\}$.
  Let $F_{ab}(t) = \E 1_B P_t 1_{B'}
  = \int_a^\infty \phi(x)
  \Phi\big(\frac{e^{-t} x - b}{\sqrt{1-e^{-2t}}}\big)\, dx$
  and consider its derivative: by~\eqref{eq:int-by-parts},
  \begin{align*}
    F_{ab}'(t)
    &= \int_a^\infty \phi(x)
    L \Phi\Big(\frac{e^{-t} x - b}{\sqrt{1-e^{-2t}}}\Big)\, dx \\
    &= -k_t \phi(a)\phi\Big(\frac{e^{-t} a - b}{\sqrt{1-e^{-2t}}}\Big) \\
    &= -\frac{k_t}{2\pi}
    \exp\Big(-\frac{a^2 + b^2 - 2 e^{-t} ab}{2(1 - e^{-2t})} \Big) \\
    &\le -\frac{k_t}{2\pi}
    \exp\Big(-\frac{a^2 + b^2}{1 - e^{-2t}} \Big).
  \end{align*}
  Now, $k_t$ is decreasing in $t$ and $\exp(-x/(1-e^{-2t}))$ is increasing
  in $t$. In particular, for any $\tau \in [t, t+s]$,
  \[
    F'_{ab}(\tau) \le -\frac{k_{t+s}}{2\pi}
    \exp\Big(-\frac{a^2 + b^2}{1 - e^{-2t}} \Big).
  \]
  Hence,
  \begin{equation}\label{eq:noise-sens-s-small}
    F_{ab}(t) - F_{ab}(t+s) \ge -s \max_{s \le \tau \le t} F_{ab}'(\tau)
    \ge
    \frac{s k_{t+s}}{2\pi}
    \exp\Big(-\frac{a^2 + b^2}{1 - e^{-2t}} \Big).
  \end{equation}
  If $s$ is large, this is a poor bound because
  $s k_{t+s}$ decreases exponentially in $s$. However, when $s \ge 1$
  we can instead use
  \begin{equation}\label{eq:noise-sens-s-large}
    F_{ab}(t) - F_{ab}(t+s) \ge F_{ab}(t) - F_{ab}(t+1)
    \ge \frac{k_{t+1}}{2\pi}
    \exp\Big(-\frac{a^2 + b^2}{1 - e^{-2t}} \Big).
  \end{equation}

  Equations~\eqref{eq:noise-sens-s-small}
  and~\eqref{eq:noise-sens-s-large} show that if
  $\E 1_B (P_t 1_{B'} - P_{t+s} 1_{B'})$ is small then $s$ must be small.
  The next step, therefore, is to control
  $\E (P_t 1_B - P_{t+s} 1_B)^2$ in terms of $s$.
  Now,
  \begin{align}
    \E (P_t 1_B - P_{t+s} 1_B)^2
    &= \E \big((P_t 1_B)^2 + (P_{t+s} 1_B)^2 - 2(P_t 1_B) (P_{t+s} 1_B)
    \big) \notag \\
    &= \E 1_B \big(P_{2t} 1_B + P_{2(t+s)} 1_B - 2P_{2t+s} 1_B\big) \notag \\
    &= \big(F_{aa}(2t) - F_{aa}(2t+s)\big) - \big(F_{aa}(2t+s) - F_{aa}(2t+2s)\big) \notag \\
    &\le s\big(F_{aa}'(2t) - F_{aa}'(2t+2s)\big),
    \label{eq:bound-norm-by-F'}
  \end{align}
  where the inequality follows because
  \[
    F_{aa}'(t) = -\frac{k_t}{2\pi}
    \exp\Big(-\frac{(1 - e^{-t}) a^2}{1 - e^{-2t}}\Big)
    = -\frac{k_t}{2\pi}
    \exp\Big(-\frac{a^2}{1 + e^{-t}}\Big)
  \]
  and so $F'_{aa}$ is an increasing function.
  To control the right hand side
  of~\eqref{eq:bound-norm-by-F'}, we go to the second derivative of $F$:
  \[
    F''(t)
    = \frac{e^{2t}}{2\pi (e^{2t} - 1)^{3/2}}
    \exp\Big(-\frac{a^2}{1+e^{-t}}\Big)
    + \frac{1}{2\pi \sqrt{e^{2t} - 1}} \frac{a^2 e^{-t}}{(1 + e^{-t})^2}
    \exp\Big(-\frac{a^2}{1+e^{-t}}\Big)
  \]
  This is decreasing in $t$; hence
  \begin{equation}\label{eq:bound-norm-by-s}
    \E (P_t 1_B - P_{t+s} 1_B)^2
    \le s\big(F'(2t) - F'(2t+2s)\big)
    \le 2s^2 F''(2t).
  \end{equation}

  We will now complete the proof by combining our upper bound on
  $\E (P_t 1_B - P_{t+s} 1_B)^2$ with our lower bounds on
  $\E 1_B (P_t 1_{B'} - P_{t+s} 1_{B'})$.
  First, assume that $s \le 1$. Then $k_{t+s} \ge k_{t+1}$ and
  so~\eqref{eq:noise-sens-s-small} plus~\eqref{eq:bound-norm-by-s}
  implies that
  \begin{align*}
    \big(\E (P_t 1_B - P_{t+s} 1_B)^2\big)^{1/2}
    &\le 2\pi \exp\Big(\frac{a^2 + b^2}{1-e^{-2t}}\Big)
    \frac{\sqrt{2 F''(2t)}}{k_{t+1}}
    \E 1_B (P_t 1_{B'} - P_{t+s} 1_{B'}) \\
    &= 2\pi^{1-\frac{2}{1-e^{-2t}}}
    \frac{\sqrt{2 F''(2t)}}{k_{t+1}}
    \frac{\E 1_B (P_t 1_{B'} - P_{t+s} 1_{B'})}
    {\big(I(\gamma(B)) I(\gamma(B'))\big)^{\frac{2}{1-e^{-2t}}}}.
  \end{align*}
  If we take
  $C(t) \ge \max\{\sqrt{2 F''(2t)} / k_{t+1}, 2/(1-e^{-2t})\}$
  then the Lemma holds in this case.
  On the other hand, if $s > 1$ then~\eqref{eq:noise-sens-s-large}
  implies that
  \[
    \frac{2\pi^{1-\frac{2}{1-e^{-2t}}}}{k_{t+1}}
    \frac{\E 1_B (P_t 1_{B'} - P_{t+s} 1_{B'})}
    {\big(I(\gamma(B)) I(\gamma(B'))\big)^{\frac{2}{1-e^{-2t}}}}
    \ge 1.
  \]
  Since $\E (P_t 1_B - P_{t+s} 1_B)^2 \le 1$ trivially, the Lemma
  holds in this case provided that
  \[C(t) \ge \max\{ 1/k_{t+1}, 2/(1-e^{-2t})\}.\qedhere\]
\end{proof}

\begin{proof}[Proof of Proposition~\ref{prop:small-s}]
  By the Cauchy-Schwarz inequality,
  \[
    \E 1_A P_t 1_A
    \le \E 1_A P_{t+s} 1_B + \sqrt{\E (P_t 1_A - P_{t+s} 1_B)^2 }
    \le \E 1_A P_{t+s} 1_B + \epsilon_1.
  \]
  Moreover, $\E 1_A P_{t+s} 1_B \le \E 1_B P_{t+s} 1_B$ since
  $B$ is a super-level set of $P_{t+s} 1_B$ with the same volume as
  $A$. Thus,
  \begin{align*}
    \E 1_B P_t 1_B - \epsilon_2
    &\le \E 1_A P_t 1_A \\
    &\le \E 1_A P_{t+s} 1_B + \epsilon_1 \\
    &\le \E 1_B P_{t+s} 1_B + \epsilon_1.
  \end{align*}
  By Lemma~\ref{lem:norm-noise-sens},
  \[
    \big(\E (P_t 1_B - P_{t+s} 1_B)^2\big)^{1/2}
    \le C(t) \E 1_B (P_t 1_B - P_{t+s} 1_B)
    \le C(t) (\epsilon_1 + \epsilon_2)
  \]
  Finally, the triangle inequality gives
  \begin{align*}
    \big(\E (P_t 1_A - P_t 1_B)^2\big)^{1/2}
    &\le \big(\E (P_t 1_A - P_{t+s} 1_B)^2\big)^{1/2}
    + \big(\E (P_t 1_B - P_{t+s} 1_B)^2\big)^{1/2} \\
    &\le \epsilon_1 + C(t) (\epsilon_1 + \epsilon_2).
  \end{align*}
  Of course, 1 can be absorbed into the constant $C(t)$.
\end{proof}

\subsection{Proof of robustness}

\begin{proof}[Proof of Theorem~\ref{thm:2-funcs-robustness}]
  First, define $t$ by $e^{-t} = \rho$.
  We then have
  $k_t^2 = \frac{\rho^2}{1-\rho^2}$
  and so the exponent of $\delta$
  in Proposition~\ref{prop:2-funcs-large-t} becomes
  \begin{equation}\label{eq:exponent}
    \frac{1}{1 + 4 \frac{\rho^2}{(1-\rho^2)(1-\rho)}} \cdot \frac{1}{1+\alpha}
  = \frac{(1 - \rho^2)(1-\rho)}{1 - \rho + 3 \rho^2 + \rho^3}
  \cdot \frac{1}{1+\alpha}.
  \end{equation}
  Of course, we can define $\alpha > 0$ (depending on $\rho$)
  so that~\eqref{eq:exponent} is
  \[
    \eta := \frac{(1 - \rho^2)(1-\rho)}{1 + 3\rho}.
  \]

  Now suppose that $f = 1_A$ and $g = 1_{A'}$
  for some $A, A' \subset \R^n$.
  Proposition~\ref{prop:2-funcs-large-t} implies
  that there are $a \in \R^n$ and $b \in \R$ such that
  $|a| \le k_t$ and
  \[
    \E \big((P_t 1_A)(X) - \Phi(\inr{a}{X} - b)\big)^2
    \le C(\rho) m^{c(\rho)}
    \delta^\eta.
  \]
  Since $|a| \le k_t$, Lemma~\ref{lem:pull-back} implies that
  we can find some $s > 0$ and a half-space $B$ such that
  $\Phi(\inr{a}{x} - b) = (P_{t+s} 1_B)(x)$; then
  \begin{equation}\label{eq:2-funcs-proof-half-space}
    \E (P_t 1_A - P_{t+s} 1_B)^2
    \le C(\rho) m^{c(\rho)}
    \delta^\eta.
  \end{equation}
  At this point, it isn't clear that $\gamma(A) = \gamma(B)$; however,
  we can ensure this by modifying $B$ slightly:
  \[
    \E (P_t 1_A - P_{t+s} 1_B)^2
    \ge (\E P_t 1_A - \E P_{t+s} 1_B)^2 = (\gamma(A) - \gamma(B))^2.
  \]
  Therefore let $\tilde B$ be a translation of $B$ so that
  $\gamma(\tilde B) = \gamma(A)$. By the triangle inequality,
  \begin{align*}
    \big(\E (P_t 1_A - P_{t+s} 1_{\tilde B})^2\big)^{1/2}
    &\le
    \big(\E (P_t 1_A - P_{t+s} 1_B)^2\big)^{1/2}
    + \big(\E (P_{t+s} 1_B - P_{t+s} 1_{\tilde B})^2\big)^{1/2} \\
    &\le
    \big(\E (P_t 1_A - P_{t+s} 1_B)^2\big)^{1/2}
    + |\gamma(B) - \gamma(\tilde B)|^{1/2} \\
    &\le
    2\big(\E (P_t 1_A - P_{t+s} 1_B)^2\big)^{1/2}.
  \end{align*}
  By replacing $B$ with $\tilde B$, we can assume
  in~\eqref{eq:2-funcs-proof-half-space} that $\gamma(A) = \gamma(B)$
  (at the cost of increasing $C(\rho)$ by a factor of 2).

  Now we apply Proposition~\ref{prop:small-s} with
  $\epsilon_1^2 = C(\rho) m^{c(\rho)}
  \delta^\eta$ and $\epsilon_2 = \delta$. The conclusion
  of Proposition~\ref{prop:small-s} leaves us with
  \begin{align*}
    \big(\E (P_t 1_A - P_t 1_B)^2\big)^{1/2}
    &\le C(\rho) m^{c(\rho)}
    (\epsilon_1 + \epsilon_2) \\
    &\le C(\rho) m^{c(\rho)}
    \delta^{\eta/2}.
  \end{align*}
  where we have absorbed the constant $C(t)$ from
  Proposition~\ref{prop:time-rev} into $C(\rho)$ and $c(\rho)$.
  Since $\E |X| \le (\E X^2)^{1/2}$ for any random variable $X$, we
  may apply Proposition~\ref{prop:time-rev}:
  \begin{align*}
    \gamma(A \symdiff B)
    &\le C(\rho) \sqrt{\E |P_t 1_A - P_t 1_B|} \\
    &\le C(\rho) \big(\E (P_t 1_A - P_t 1_B)^2\big)^{1/4} \\
    &\le C(\rho) m^{c(\rho)} \delta^{\eta/4}.
  \end{align*}
  By applying the same argument to $A'$ and $B'$,
  this establishes Theorem~\ref{thm:2-funcs-robustness} in the case
  that $f$ and $g$ are indicator functions.

  To extend the result to other functions, note that
  $\E J(f(X), g(Y)) = \E J(1_A(\tilde X), 1_{A'}(\tilde Y))$ where
  $\tilde X$ and $\tilde Y$ are $\rho$-correlated Gaussian vectors
  in $\R^{n+1}$, and
  \begin{align*}
    A &= \{(x, x_{n+1}) \in \R^{n+1}: x_{n+1} \ge \Phi^{-1}(f(x))\} \\
    A' &= \{(x, x_{n+1}) \in \R^{n+1}: x_{n+1} \ge \Phi^{-1}(g(x))\}.
  \end{align*}
  Moreover, $\E f = \gamma_{n+1}(A)$ and $\E g = \gamma_{n+1}(A')$.
  Applying Theorem~\ref{thm:2-funcs-robustness} for indicator functions
  in dimension $n+1$, we find a half-space $B$ so that
  \begin{equation}\label{eq:sets-to-functions}
    \gamma_{n+1} (A \symdiff B)
    \le C(\rho) m^{c(\rho)} \delta^{\eta/4}.
  \end{equation}
  By slightly perturbing $B$, we can assume that it does not take the
  form $\{x_i \ge b\}$ for any $1 \le i \le n$; in particular, this
  means that we can write $B$ in the form
  \[
    B = \{(x, x_{n+1}) \in \R^n: x_{n+1} \ge \inr{a}{x} - b\}.
  \]
  for some $a \in \R^n$ and $b \in \R$.
  But then
  \[
    \gamma_{n+1} (A \symdiff B)
    = \E |f(X) - \Phi(\inr{a}{X} - b)|;
  \]
  combined with~\eqref{eq:sets-to-functions}, this completes the proof.
\end{proof}

\section{Optimal dependence on $\rho$}\label{sec:rho}

In this section, we will prove Theorem~\ref{thm:1-func-robustness}. To do so
we need to improve the dependence on $\rho$ that appeared
in Theorem~\ref{thm:2-funcs-robustness}.
Before we begin, let us list the places where the dependence on $\rho$
can be improved:
\begin{enumerate}
  \item In Proposition~\ref{prop:2-funcs-holder},
    we needed to control
    \[
      \E_\rho \exp\Big(\beta \frac{v_t^2(X) + w_t^2(Y) - 2 \rho v_t(X) w_t(Y)}
      {2(1-\rho^2)}\Big).
    \]
    Of course, the denominator of the exponent blows up as $\rho \to 1$.
    However, if $v_t = w_t$ then the numerator goes to zero (in law, at least)
    at the same rate. In this case, therefore,
    we are able to bound the above expectation by
    an expression not depending on $\rho$.

  \item In the proof of Proposition~\ref{prop:2-funcs-large-t},
    we used an $L_\infty$ bound on $|\grad v_t|$ and $|\grad w_t|$ to show
    that for some $r < 1$,
    \[
      \E_\rho\big(|\grad v_t(X) - \grad w_t(Y)|^{2}\big)^{1/r}
      \le C(t) \E_\rho\big(|\grad v_t(X) - \grad w_t(Y)|^{2r}\big)^{1/r}.
    \]
    This inequality is not sharp in its $\rho$-dependence because
    when $v_t = w_t$,
    the left hand side shrinks like $(1-\rho)^{1/r}$ as $\rho \to 1$,
    while the right hand side shrinks like $1-\rho$.
    We can get the right $\rho$-dependence by using an $L_p$ bound on
    $|\grad v_t(X) - \grad v_t(Y)|$
    when applying H\"older's inequality, instead of an $L_\infty$ bound.

  \item In applying Proposition~\ref{prop:small-s}, we were forced to take
    $e^{-t} = \rho$. Since most of our bounds have a (necessary) dependence
    on $t$, this causes a dependence on $\rho$ which is not optimal.
    To get around this, we will use the subadditivity property
    of Kane~\cite{Kane:11}, Kindler and O'Donnell~\cite{KindlerOdonnell:12}
    to show that we can actually choose certain values of
    $t$ such that $e^{-t}$ is much smaller than $\rho$.
    In particular, we can take $t$ to be quite
    large even when $\rho$ is close to 1.
\end{enumerate}

Once we have incorporated the first two improvements, we will obtain a better
version of Proposition~\ref{prop:2-funcs-large-t}:

\begin{proposition}\label{prop:1-func-large-t}
For any $\alpha, t > 0$, there is a constant $C(t, \alpha)$
such that for any $f: \R^n \to [0, 1]$,
there exist $a \in \R^n, b \in \R$ with $|a| \le k_t$ such that
\[
 \E \big(f_t(X) - \Phi(\inr{X}{a} - b)\big)^2
 \le C(t, \alpha) m^{\frac{(1 + k_t)^2}{1 + 8 k_t^2} - \alpha}
 \Big( \frac{\delta}{\rho\sqrt{1-\rho}} \Big)^{\frac{1}{1+8k_t^2} - \alpha}.
\]
where $k_t = (e^{2t}-1)^{-1/2}$, $\delta(f) = \E_\rho J(f(X), f(Y)) - J(\E f, \E f)$, and
$m(f) = \E f(1-\E f)$.

Moreover, this statement holds with a $C(t, \alpha)$ which, for any fixed $\alpha$,
is decreasing in $t$.
\end{proposition}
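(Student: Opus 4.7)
The plan is to retrace the proof of Proposition~\ref{prop:2-funcs-large-t} in the specialization $g = f$, inserting two sharper estimates that remove the pathological behaviour of the constants as $\rho \to 1$. The averaging trick over $[t, t(1+\alpha)]$, the Poincar\'e-type inequality of Lemma~\ref{lem:2-funcs-poincare} (specialized to $v = w = v_t$), the mean-median bound of Lemma~\ref{lem:mean-median}, and the pull-back Lemma~\ref{lem:partial-pullback} all carry over essentially unchanged. The monotonicity of $C(t,\alpha)$ in $t$ falls out automatically because each $t$-dependent quantity used (such as $k_t$ and the semigroup smoothing bounds) is itself monotone in $t$.

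The first sharpening is a symmetric analogue of Proposition~\ref{prop:2-funcs-holder}. With $v_t = w_t$ the quadratic form in the exponent of Lemma~\ref{lem:diff-R_t} obeys the pointwise identity
\[
v_t(X)^2 + v_t(Y)^2 - 2\rho v_t(X) v_t(Y) = (1-\rho)\bigl(v_t(X)^2 + v_t(Y)^2\bigr) + \rho\bigl(v_t(X) - v_t(Y)\bigr)^2,
\]
so after dividing by $2(1-\rho^2)$ the exponent splits into $(v_t(X)^2 + v_t(Y)^2)/(2(1+\rho))$ and $\rho(v_t(X)-v_t(Y))^2/(2(1-\rho^2))$. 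The first denominator is bounded away from $0$ as $\rho \to 1$, so its exponential moment is controlled by Lemma~\ref{lem:lipschitz-gaussian} with constants independent of $\rho$. For the second piece, conditionally on $X$ the map $Z \mapsto v_t(\rho X + \sqrt{1-\rho^2}\,Z)$ is $k_t\sqrt{1-\rho^2}$-Lipschitz in $Z$, so the $1-\rho^2$ in the squared Lipschitz constant exactly cancels the $1-\rho^2$ in the denominator. A Cauchy--Schwarz split followed by these two sub-Gaussian exponential-moment bounds yields
\[
\diff{R_t}{t} \ge c(t)\, \rho \, m(f)^{c'(t)} \bigl(\E |\grad v_t(X) - \grad v_t(Y)|^{2r}\bigr)^{1/r}
\]
for $r \le (1+8k_t^2)^{-1}$, with $c(t)$ no longer degenerating as $\rho \to 1$; the ``$8$'' absorbs the repeated Cauchy--Schwarz doublings.

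The second sharpening replaces the $L^\infty$-to-$L^2$ interpolation used in Proposition~\ref{prop:2-funcs-large-t} by an $L^p$ interpolation. Writing $W := |\grad v_t(X) - \grad v_t(Y)|$, instead of the bound $(\E W^{2r})^{1/r} \ge (\E W^2)^{1/r}/\|W\|_\infty^{2(1-r)/r}$ I would use $\|W\|_2 \le \|W\|_{2r}^{\theta} \|W\|_p^{1-\theta}$ for some fixed $p > 2$ and show that $\|W\|_p \le C(t,p)\sqrt{1-\rho}$. The latter estimate combines the semigroup smoothing bound on the Hessian of $v_t$ with Gaussian concentration conditional on $X$, using that the law of $Y \mid X$ is Gaussian with per-coordinate variance $1-\rho^2$. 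The resulting factor $\sqrt{1-\rho}$ is exactly what produces the $\rho\sqrt{1-\rho}$ in the denominator of $\delta$ in the statement. Feeding everything back into Lemma~\ref{lem:2-funcs-poincare} and running the averaging argument over $[t, t(1+\alpha)]$ as in the original proof then gives the claimed bound; the exponents $(1+k_t)^2/(1+8k_t^2)$ and $1/(1+8k_t^2)$ (together with the $-\alpha$ cushion) arise from tracking the interpolation parameter $\theta$ and the Poincar\'e constant through these steps.

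The main technical obstacle is the dimension-free $L^p$ bound of order $\sqrt{1-\rho}$ on $W$. The pointwise estimate $|\grad v_t| \le k_t$ was enough in Proposition~\ref{prop:2-funcs-large-t}, but the sharper estimate now needed involves the Hessian of $v_t = \Phi^{-1}\circ f_t$, which degenerates near the extreme level sets of $f_t$ (where $\phi(v_t)$ is small). Handling this degeneration in an averaged, dimension-free manner, most likely through another reverse-H\"older argument that pays a small additional power of $m(f)$, is where the real technical work lies and is what ultimately forces the $-\alpha$ loss in the exponents.
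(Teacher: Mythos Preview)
Your proposal is correct and follows essentially the same route as the paper. The two sharpenings you identify are exactly the two the paper implements (Propositions~\ref{prop:1-func-holder} and~\ref{prop:higher-moments}), and the remaining skeleton---Poincar\'e inequality, averaging over $[t,t(1+\alpha)]$, and Lemma~\ref{lem:partial-pullback}---is reused verbatim.

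Two small technical points where the paper's execution differs from your sketch are worth noting. For the exponential-moment term, the paper does not condition on $X$; instead it writes $X=Z_1+Z_2$, $Y=Z_1-Z_2$ with $Z_1=(X+Y)/2$, $Z_2=(X-Y)/2$ and conditions on $Z_1$ (Lemma~\ref{lem:1-func-tail-bound}). The function $v_t(Z_1+Z_2)-v_t(Z_1-Z_2)$ is then odd in $Z_2$, so its conditional median is exactly zero and Lemma~\ref{lem:lipschitz-gaussian} applies with no leftover median term. Your conditioning on $X$ would leave a nonzero conditional median which, while controllable, costs extra work. For the $L^p$ bound on $|\nabla v_t(X)-\nabla v_t(Y)|$, the paper again conditions on the shared Gaussian part $Z$ (writing $X=\sqrt\rho Z+\sqrt{1-\rho}Z_1$, $Y=\sqrt\rho Z+\sqrt{1-\rho}Z_2$) and then applies a Pinelis-type comparison inequality together with the Hessian moment bound of~\cite{MosselNeeman:12} (Proposition~\ref{prop:hessian-moments}); this is the ``real technical work'' you anticipate, and it is imported rather than reproved. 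Your conditioning on $X$ would work here too but is slightly less symmetric.
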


Once we have incorporated the third improvement above, we will use the
arguments of Section~\ref{sec:time-rev} to prove Theorem~\ref{thm:1-func-robustness}.

\subsection{A better bound on the auxiliary term}
First, we will tackle item 1 above. Our improved bound leads to a
version of Proposition~\ref{prop:2-funcs-holder} with the correct dependence
on $\rho$.

\begin{proposition}\label{prop:1-func-holder}
  Let $k_t = (e^{2t} - 1)^{-1/2}$. There are constants $0 < c, C < \infty$
  such that
  for any $t > 0$,
  if $r \le \frac{1}{1 + 8 k_t^2}$ 
  then
  \[
    \diff{R_t}{t} \ge \frac{\rho}{\sqrt{1-\rho^2}}
    (c m(f))^{(1+k_t)^2}
    \Big(\E |\grad v_t(X) - \grad v_t(Y)|^{2r}\Big)^{1/r}
  \]
  where $m(f) = \E f (1 - \E f)$.
\end{proposition}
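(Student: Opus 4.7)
The plan is to refine the argument of Proposition~\ref{prop:2-funcs-holder} by exploiting the symmetry $f = g$ (so $v_t = w_t =: v$) in order to control the auxiliary exponential prefactor uniformly in $\rho$. Start from Lemma~\ref{lem:diff-R_t} and apply the reverse-H\"older inequality \eqref{eq:rev-holder} with exponents $-\beta$ and $r$ satisfying $\tfrac{1}{r} - \tfrac{1}{\beta} = 1$, exactly as in the proof of Proposition~\ref{prop:2-funcs-holder}. The problem then reduces to bounding
\[
E := \E \exp\!\Big(\beta\,\tfrac{v^2(X)+v^2(Y)-2\rho v(X)v(Y)}{2(1-\rho^2)}\Big)
\]
by a quantity that does not blow up as $\rho\to 1$. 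The starting point is the identity $v^2(X)+v^2(Y)-2\rho v(X)v(Y) = (1-\rho)(v^2(X)+v^2(Y))+\rho(v(X)-v(Y))^2$, which splits the exponent as $A+B$ with $A = \tfrac{\beta(v^2(X)+v^2(Y))}{2(1+\rho)}$ and $B = \tfrac{\beta\rho(v(X)-v(Y))^2}{2(1-\rho^2)}$. By Cauchy--Schwarz, $E \le \sqrt{\E e^{2A}\cdot \E e^{2B}}$.

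The term $\E e^{2A}$ is handled with another Cauchy--Schwarz step (reducing to $\E\exp(2\beta v^2/(1+\rho))$) and an application of Lemma~\ref{lem:lipschitz-gaussian} to $v/k_t$, using that $v$ is $k_t$-Lipschitz by Lemma~\ref{lem:grad-bound-v}; this yields a bound of the form $C\exp(O(\beta N_t^2/(1+\rho)))$, where $N_t$ is a median of $v_t$. Taking $\beta = \tfrac{1}{8k_t^2}$ (equivalently $r = \tfrac{1}{1+8k_t^2}$) keeps the relevant concentration parameter below $1/2$ so that the lemma applies.

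The main obstacle is controlling $\E e^{2B}$ uniformly in $\rho$. Treating $v(X)-v(Y)$ as a Lipschitz function on $\R^{2n}$ gives Lipschitz constant $k_t\sqrt{2(1+\rho)}$, which is far too crude to absorb the singular factor $1/(1-\rho^2)$ in $B$. The key new idea --- and the point where the hypothesis $f=g$ enters decisively --- is the orthogonal change of variables $U' = (X+Y)/\sqrt{2(1+\rho)}$, $V' = (X-Y)/\sqrt{2(1-\rho)}$, under which $U'$ and $V'$ are independent standard Gaussians and the involution $V'\mapsto -V'$ swaps $X$ and $Y$. Consequently, viewed as a function of $V'$ with $U'$ held fixed, $v(X)-v(Y)$ has median zero and Lipschitz constant only $k_t\sqrt{2(1-\rho)}$, which is precisely the scale required to cancel the $1/(1-\rho)$ factor. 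Applying Lemma~\ref{lem:lipschitz-gaussian} conditionally on $U'$ and then integrating over $U'$ gives $\E e^{2B} = O(1)$ uniformly in $\rho$, provided $\beta \lesssim 1/k_t^2$, which is satisfied by our choice $\beta = 1/(8k_t^2)$.

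Combining the two estimates yields $E \le C\exp(O(\beta N_t^2/(1+\rho)))$, hence $E^{-1/\beta} \ge c\exp(-O(N_t^2/(1+\rho)))$. Substituting the bound $N_t^2 \le 2(1+k_t)^2\log(2/m(f))$ from Lemma~\ref{lem:mean-median} and absorbing universal constants into $c$ produces the required lower bound of the form $(c\,m(f))^{(1+k_t)^2}$. Plugging this back into the reverse-H\"older step gives exactly the statement of the proposition, with the prefactor $\rho/\sqrt{1-\rho^2}$ inherited from Lemma~\ref{lem:diff-R_t}.
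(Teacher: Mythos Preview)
Your proposal is correct and follows essentially the same route as the paper. The paper packages the key conditioning argument (your $U',V'$ change of variables) as a separate lemma (Lemma~\ref{lem:1-func-tail-bound}), and uses the algebraic decomposition
\[
v^2(X)+v^2(Y)-2\rho v(X)v(Y) = (v(X)-v(Y))^2 + 2(1-\rho)v(X)v(Y)
\]
rather than your $(1-\rho)(v^2(X)+v^2(Y))+\rho(v(X)-v(Y))^2$, but both splittings feed the same Cauchy--Schwarz step, the same Lipschitz-concentration bound on the ``square'' term, and the same odd-symmetry conditioning argument for the ``difference'' term; the remainder of the proof is identical.
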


To obtain this improvement, we note that for
a Lipschitz function $v$, $(v(X) - v(Y)) / \sqrt{1-\rho}$
satisfies a Gaussian tail bound that does not depend on $\rho$:
\begin{lemma}\label{lem:1-func-tail-bound}
  If $v: \R^n \to \R$ is $L$-Lipschitz then
  \[
    \Pr_\rho\Big(v(X) - v(Y) \ge L s \sqrt{2(1-\rho)}\Big)
    \le 1 - \Phi(s).
  \]
  In particular,
  if $4 \beta L^2 < 1$ then
  \[
    \E_\rho \exp\Big(\beta \frac{(v(X) - v(Y))^2}{(1-\rho)} \Big)
    \le \frac{1}{\sqrt{1 - 4 \beta L^2}}.
  \]
\end{lemma}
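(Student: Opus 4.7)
The key idea is to exploit the standard orthogonal decomposition of a $\rho$-correlated Gaussian pair: write
\[
  X = \sqrt{\tfrac{1+\rho}{2}}\, U + \sqrt{\tfrac{1-\rho}{2}}\, V, \qquad
  Y = \sqrt{\tfrac{1+\rho}{2}}\, U - \sqrt{\tfrac{1-\rho}{2}}\, V,
\]
where $U, V$ are independent standard Gaussian vectors on $\R^n$. The point of this change of variables is that, while the naive Lipschitz constant of $(X,Y) \mapsto v(X) - v(Y)$ on the product space $(U,V)$ gives only the $\sqrt{2(1+\rho)}$ bound coming from the $U$-direction, the dangerous direction $U$ can be handled by \emph{conditioning}.

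First I would condition on $U$ and consider $F_U(V) := v(X(U,V)) - v(Y(U,V))$ as a function of $V$ alone. The triangle inequality together with $L$-Lipschitzness of $v$ gives immediately that $F_U$ is $L\sqrt{2(1-\rho)}$-Lipschitz in $V$. Moreover, swapping $V \mapsto -V$ swaps $X \leftrightarrow Y$, so $F_U(-V) = -F_U(V)$: the conditional law of $F_U(V)$ is symmetric about zero, so both its mean and median vanish. Applying the Gaussian isoperimetric / Borell--TIS tail bound for Lipschitz functions of a standard Gaussian (conditionally on $U$), one obtains
\[
  \Pr\bigl(F_U(V) \ge sL\sqrt{2(1-\rho)} \,\big|\, U\bigr) \le 1 - \Phi(s),
\]
and integrating over $U$ gives the first assertion.

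For the exponential moment bound, let $Z = (v(X) - v(Y))/(L\sqrt{2(1-\rho)})$. The first part applied to both $v$ and $-v$ (equivalently, using the symmetry $(X,Y) \overset{d}{=} (Y,X)$) gives $\Pr(|Z| \ge s) \le 2(1-\Phi(s)) = \Pr(|W| \ge s)$ for $W \sim N(0,1)$, i.e.\ $|Z|$ is stochastically dominated by $|W|$. Consequently $\E e^{\mu Z^2} \le \E e^{\mu W^2} = (1-2\mu)^{-1/2}$ for $\mu < 1/2$, and taking $\mu = 2\beta L^2$ yields the claimed bound.

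The only real subtlety is the observation that conditioning on $U$ is the right thing to do: a direct Lipschitz bound on the full $(U,V)$ product space produces an inferior constant $\sqrt{2(1+\rho)}$, and the correct $\sqrt{2(1-\rho)}$ scaling is visible only once the $U$-variable is frozen. Everything else is routine Gaussian concentration plus the standard computation of $\E\exp(\mu W^2)$.
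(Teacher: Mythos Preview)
Your proposal is correct and is essentially the same argument as the paper's: the paper sets $Z_1=(X+Y)/2$, $Z_2=(X-Y)/2$, conditions on $Z_1$, observes that $v(Z_1+Z_2)-v(Z_1-Z_2)$ is $2L$-Lipschitz and odd in $Z_2$, and applies Gaussian concentration; your variables $U,V$ are just the standardized versions of $Z_1,Z_2$. The paper dismisses the exponential-moment step as ``a standard calculation,'' and your stochastic-dominance argument is exactly such a calculation.
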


\begin{proof}
  Let $Z_1 = \frac{X + Y}{2}$ and $Z_2 = \frac{X - Y}{2}$,
  so that $\E Z_1^2 = \frac{1 + \rho}{2}$ and
  $\E Z_2^2 = \frac{1 - \rho}{2}$.
  Now we condition on $Z_1$: the function $v(Z_1 + Z_2) - v(Z_1 - Z_2)$
  is $2L$-Lipschitz in $Z_2$ and has conditional median zero
  (because it is odd in $Z_2$); thus
  \[
    \Pr_\rho\Big(v(Z_1 + Z_2) - v(Z_1 - Z_2) \ge Ls \sqrt{2(1-\rho)} \Big| Z_1\Big)
    \le 1 - \Phi(s).
  \]
  Now integrate out $Z_1$ to prove the first claim.

  Proving the second claim from the first one is a standard calculation.
\end{proof}

Next, we use the estimate of Lemma~\ref{lem:1-func-tail-bound} to prove a bound on
\[
 \E_\rho \exp\Big(
 \beta \frac{v_t^2(X) + v_t^2(Y) - 2 \rho v_t(X) w_t(Y)}{2(1-\rho^2)}
 \Big)
\]
that is better than the one from~\eqref{eq:bounding-exp-squared}
which was used to derive Proposition~\ref{prop:2-funcs-holder}.

\begin{lemma}\label{lem:1-func-exp-squared}
  There is a constant $C$ such that for any $t > 0$,
  and for any $\beta > 0$ with $6\beta k_t^2 \le 1$,
  \[
    \E_\rho \exp\Big(
    \beta \frac{v_t^2(X) + v_t^2(Y) - 2 \rho v_t(X) v_t(Y)}{2(1-\rho^2)}
    \Big)
    \le C e^{M_t^2/2},
  \]
  where $M_t$ is a median of $v_t$.
\end{lemma}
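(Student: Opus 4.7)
The plan is to exploit the algebraic identity
\[
  v^2 + w^2 - 2\rho v w = (1-\rho)(v^2 + w^2) + \rho (v-w)^2,
\]
so that after dividing by $2(1-\rho^2)=2(1-\rho)(1+\rho)$ the quadratic form becomes
\[
  \frac{v_t^2(X) + v_t^2(Y) - 2\rho v_t(X) v_t(Y)}{2(1-\rho^2)}
  = \frac{v_t^2(X) + v_t^2(Y)}{2(1+\rho)}
  + \frac{\rho \,(v_t(X) - v_t(Y))^2}{2(1-\rho^2)}.
\]
The virtue of this splitting is that the first summand is free of the $(1-\rho)$ singularity, while in the second summand the singular factor $(1-\rho)$ in the denominator is exactly compensated by the fact that $v_t(X)-v_t(Y)$ has typical size $O(\sqrt{1-\rho})$ (because $v_t$ is $k_t$-Lipschitz by Lemma~\ref{lem:grad-bound-v}).

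Next, I apply Cauchy--Schwarz to split the exponential of the sum into a product:
\[
  \E_\rho e^{\beta(A+B)} \le \bigl(\E_\rho e^{2\beta A}\bigr)^{1/2}\bigl(\E_\rho e^{2\beta B}\bigr)^{1/2},
\]
where $A$ and $B$ are the two summands above. For the ``difference'' factor $B$, I use $\rho/(1-\rho^2)\le 1/(1-\rho)$ and invoke Lemma~\ref{lem:1-func-tail-bound} with $L=k_t$; the hypothesis $6\beta k_t^2\le 1$ leaves ample room for the lemma's condition $4\beta' L^2<1$ and yields a bound by an absolute constant, independent of $\rho$ and $M_t$.

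For the ``sum'' factor $A$, I first apply Cauchy--Schwarz in the pair $(X,Y)$, which (by the symmetry of the $\rho$-correlated joint law) reduces to bounding $\E\exp\bigl(2\beta v_t^2(X)/(1+\rho)\bigr)$. Then I apply Lemma~\ref{lem:lipschitz-gaussian} to the normalized function $v_t/k_t$, which is $1$-Lipschitz with median $M_t/k_t$. The relevant parameter is $\lambda = 4\beta k_t^2/(1+\rho)$, which our standing assumption $6\beta k_t^2\le 1$ keeps bounded away from $1$ (in fact $\lambda\le 2/3$). The prefactor $1/\sqrt{1-\lambda}$ is then an absolute constant, and the exponent $\tfrac{\lambda}{2(1-\lambda)}(M_t/k_t)^2$ simplifies to a quantity controlled by $M_t^2/2$ up to an absolute multiplicative constant. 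Putting the two factors together and taking square roots produces the desired bound $C e^{M_t^2/2}$.

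The main obstacle is the bookkeeping in the last paragraph: the factor $1/(1+\rho)$ and the Cauchy--Schwarz doubling of $\beta$ both shrink the useful exponent, and one has to choose the constant $6$ in the hypothesis $6\beta k_t^2 \le 1$ large enough to guarantee $\lambda\le 2/3$ (so $1-\lambda$ is bounded below by a definite constant) while still being able to express the resulting exponent in the form $M_t^2/2$ uniformly in $\rho\in(0,1)$. Once these constants are tracked, the absolute constant $C$ in the statement absorbs all the harmless prefactors coming from the two applications of Cauchy--Schwarz and from the tail inequalities.
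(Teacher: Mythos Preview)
Your proposal is correct and follows essentially the same route as the paper: split the quadratic form into a ``difference'' piece controlled by Lemma~\ref{lem:1-func-tail-bound} and a remaining piece controlled by Lemma~\ref{lem:lipschitz-gaussian}, with Cauchy--Schwarz separating the two. The only difference is cosmetic: the paper uses the identity $v^2+w^2-2\rho vw=(v-w)^2+2(1-\rho)vw$, so its second piece is the cross term $vw/(1+\rho)$ (which it implicitly bounds via $2vw\le v^2+w^2$ before invoking Lemma~\ref{lem:lipschitz-gaussian}), whereas your identity $v^2+w^2-2\rho vw=(1-\rho)(v^2+w^2)+\rho(v-w)^2$ lands directly on $(v^2+w^2)/(2(1+\rho))$ and is marginally cleaner.
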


\begin{proof}
 We begin with the Cauchy-Schwarz inequality:
 \begin{align}
  &\E_\rho \exp\Big(
  \beta \frac{v_t^2(X) + v_t^2(Y) - 2 \rho v_t(X) v_t(Y)}{2(1-\rho^2)}
  \Big) \notag \\
  &= \E_\rho \exp\Big(
  \beta \frac{(v_t(X) - v_t(Y))^2}{2(1-\rho^2)}\Big)
  \exp\Big(\beta\frac{v_t(X) v_t(Y)}{1+\rho}
  \Big) \notag \\
  &\le \bigg(\E_\rho \exp\Big(2\beta \frac{(v_t(X) - v_t(Y))^2}{2(1-\rho^2)}\Big)\bigg)^{1/2}
  \bigg(\exp\Big(2\beta\frac{v_t(X)^2}{1+\rho}\Big)\bigg)^{1/2}.
  \label{eq:1-func-exp-squared}
 \end{align}
 Now, recall from Lemma~\ref{lem:grad-bound-v} that $v_t$ is $k_t$-Lipschitz. In particular,
 Lemma~\ref{lem:1-func-tail-bound} implies that if $8\beta k_t^2 \le 1$ then the first term
 of~\eqref{eq:1-func-exp-squared} is at most $\sqrt 2$.
 Finally, Lemma~\ref{lem:lipschitz-gaussian} implies that the second term
 of~\eqref{eq:1-func-exp-squared} is bounded by $C e^{M_t^2/2}$.
\end{proof}

\begin{proof}[Proof of Proposition~\ref{prop:1-func-holder}]
 First, follow the proof of Proposition~\ref{prop:2-funcs-holder}
 up until~\eqref{eq:after-rev-holder}. At this point, we can apply
 Lemma~\ref{lem:1-func-exp-squared} to obtain
 \[
  \diff{R_t}{t} \ge c\frac{\rho}{\sqrt{1-\rho^2}}
  e^{M_t^2/2}
  \Big(\E_\rho | \grad v_t(X) - \grad v_t(Y)|^{2r}\Big)^{1/r},
 \]
 and we conclude by applying Lemma~\ref{lem:mean-median}, which implies that
 \[e^{M_t^2/2} \ge (c m(f))^{(1+k_t)^2}. \qedhere\]
\end{proof}

\subsection{Higher moments of $|\grad v_t(X) - \grad v_t(Y)|$}

Here, we will carry out the second step of the plan outlined
at the beginning of Section~\ref{sec:rho}. The main result is an upper
bound on arbitrary moments of $|\grad v_t(X) - \grad v_t(Y)|$.

\begin{proposition}\label{prop:higher-moments}
  There is a constant $C$ such that
  for any $t > 0$ and any $1 \le q < \infty$,
\[
  \big(\E_\rho |\grad v_t(X) - \grad v_t(Y)|^q\big)^{1/q}
  \le
   C k_t^2 \sqrt {q(1-\rho)}
    \Big( (1 + k_t) \sqrt{\log (1 / m(f))} + \sqrt q k_t\Big).
\]
\end{proposition}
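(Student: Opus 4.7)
The plan is to use a path-integral representation of $\grad v_t(X) - \grad v_t(Y)$ together with the pointwise bound $|\grad v_t| \le k_t$ from Lemma~\ref{lem:grad-bound-v}, and to control moments via Gaussian concentration for Lipschitz functions (Lemmas~\ref{lem:1-func-tail-bound} and~\ref{lem:lipschitz-gaussian}). The structure of the final bound suggests that the two summands arise from the two terms in the chain-rule decomposition
\[
  \Hess v_t = v_t (\grad v_t)(\grad v_t)^T + \frac{\Hess f_t}{\phi(v_t)},
\]
with the $(1+k_t)\sqrt{\log(1/m)}$ piece coming from the typical size of $|v_t|$ (via its median and Lemma~\ref{lem:mean-median}) and the $\sqrt q\,k_t$ piece from its Gaussian fluctuations.

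First I would write $X_s = X + s(Y-X)$ and
\[
  \grad v_t(X) - \grad v_t(Y) = \int_0^1 \Hess v_t(X_s)\,(X-Y)\, ds,
\]
noting that each $X_s$ is marginally Gaussian with covariance $\sigma_s^2 I \preceq I$, so all Lipschitz concentration estimates for $v_t$ transfer to $v_t(X_s)$. Applying the decomposition of $\Hess v_t$, the integrand splits as
\[
  v_t(X_s)\, D_{X-Y} v_t(X_s)\, \grad v_t(X_s) \;+\; \frac{\Hess f_t(X_s)(X-Y)}{\phi(v_t(X_s))},
\]
so that by Lemma~\ref{lem:grad-bound-v} the first term is bounded in norm by $k_t\,|v_t(X_s)|\,|D_{X-Y} v_t(X_s)|$. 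For the second term, I would use the Mehler-style representation $u^T \Hess f_t(x) u = k_t^2\,\E[f(Y)((u\cdot Z)^2 - 1)]$ together with Cauchy--Schwarz in $Z$ and the bound $\E f(Y)^2 \le f_t(x)$ to derive $\|\Hess f_t(x)\|_{\mathrm{op}} \le C k_t^2 \sqrt{f_t(1-f_t)}$; Mills-ratio estimates for $\sqrt{\Phi(v)(1-\Phi(v))}/\phi(v)$ then give the pointwise control $\|\Hess f_t/\phi(v_t)\|_{\mathrm{op}} \le C k_t^2 (1+|v_t|)$, so overall $\|\Hess v_t(x)\|_{\mathrm{op}} \le C k_t^2 (1+|v_t(x)|)$.

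Next I would take the $L^q$ norm and reduce everything to dimension-free quantities. Moments of $1 + |v_t(X_s)|$ are controlled by $M_t$ plus the sub-Gaussian fluctuations of $v_t(X_s) - M_t$: by Lemma~\ref{lem:lipschitz-gaussian} applied to the $k_t$-Lipschitz function $v_t$, and by Lemma~\ref{lem:mean-median},
\[
  (\E (1+|v_t(X_s)|)^q)^{1/q} \le C\bigl((1+k_t)\sqrt{\log(1/m(f))} + \sqrt q\, k_t\bigr).
\]
Moments of $|D_{X-Y}v_t(X_s)|$ and of $|X-Y|$ projected onto the direction singled out by $\Hess v_t(X_s)$ are in turn controlled by Lemma~\ref{lem:1-func-tail-bound}, which gives $(\E|u\cdot(X-Y)|^q)^{1/q} \le C\sqrt{q(1-\rho)}$ uniformly in unit vectors $u$. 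Multiplying these two estimates and using Minkowski's inequality to exchange the integral in $s$ with the $L^q$ norm produces the claimed bound.

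The main obstacle I expect is achieving the dimension-free control, since the naive bound $|\Hess v_t(X_s)(X-Y)| \le \|\Hess v_t\|_{\mathrm{op}}|X-Y|$ would introduce a dimension-dependent $\sqrt n$ factor through the $L^q$ norm of $|X-Y|$. The resolution is that the Hessian action always produces a vector in a specific direction; for the $v_t(\grad v_t)(\grad v_t)^T$ part, this reduces directly to the scalar $D_{X-Y}v_t = (X-Y)\cdot\grad v_t(X_s)$, which as a conditionally Gaussian variable with variance at most $2(1-\rho)k_t^2$ has the required dimension-free sub-Gaussian tail; for the $\Hess f_t/\phi(v_t)$ part, one argues similarly by expanding the Mehler representation and using Lemma~\ref{lem:1-func-tail-bound} to bound projections of $X-Y$ onto the random direction determined by $X_s$. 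Carefully executing this reduction is the technical heart of the argument.
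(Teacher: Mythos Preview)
Your approach diverges from the paper's, and the divergence matters because the dimension-free step you flag as ``the technical heart'' is where your sketch breaks down.

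The paper does not use a path integral at all. Instead it writes $X=\sqrt\rho Z+\sqrt{1-\rho}Z_1$, $Y=\sqrt\rho Z+\sqrt{1-\rho}Z_2$, conditions on $Z$, and applies an inequality of Pinelis (Proposition~\ref{prop:frob-norm-concentration}): for any $C^1$ map $h:\R^n\to\R^k$ and independent standard Gaussians $Z_1,Z_2$,
\[
  \big(\E|h(Z_1)-h(Z_2)|^q\big)^{1/q}\le C\sqrt q\,\big(\E\|Dh\|_F^q\big)^{1/q}.
\]
With $h(x)=\grad v_t(\sqrt\rho Z+\sqrt{1-\rho}\,x)$ this yields the factor $\sqrt{q(1-\rho)}$ times $(\E\|Hv_t\|_F^q)^{1/q}$, and the latter is bounded dimension-freely by Proposition~\ref{prop:hessian-moments} (proved in~\cite{MosselNeeman:12}). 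The Frobenius norm, not the operator norm, is what makes this work: Pinelis's inequality is exactly the device that converts a Frobenius-norm moment bound on $Dh$ into a dimension-free increment bound, and it is not a consequence of any pointwise path-integral estimate.

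Your proposal, by contrast, controls only $\|\Hess v_t\|_{\mathrm{op}}$ pointwise. For the rank-one piece $v_t(\grad v_t)(\grad v_t)^T$ your reduction to a one-dimensional projection is fine, but for $\Hess f_t/\phi(v_t)$ there is no ``random direction determined by $X_s$'': the matrix $\Hess f_t(x)=k_t^2\,\E_W\bigl[f(\cdot)(WW^T-I)\bigr]$ is genuinely full-rank in general, and the $-I$ term in the Mehler representation forces an $|X-Y|$ contribution that your operator-norm bound turns into a $\sqrt n$ factor. The vague phrase ``one argues similarly'' hides exactly this gap. To close it you would need a Frobenius-norm bound on $\Hess v_t$ (which is the content of Proposition~\ref{prop:hessian-moments}) together with something like Pinelis's inequality to exploit it --- at which point you have reproduced the paper's argument and the path integral becomes superfluous.
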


If we fix $q$ and $t$, then the bound of Proposition~\ref{prop:higher-moments}
has the right dependence on $\rho$. In particular, we will use it instead of the
uniform bound $|\grad v_t| \le k_t$, which does not improve as $\rho \to 1$.

There are two main tools in the proof of Proposition~\ref{prop:higher-moments}.
The first is a moment bound on the Hessian of $v_t$, which was proved in~\cite{MosselNeeman:12}.
In what follows, $\|\cdot\|_F$ denotes the Frobenius norm of a matrix.

\begin{proposition}\label{prop:hessian-moments}
 Let $H v_t$ denote the Hessian matrix of $v_t$. There is a constant $C$ such that
 for all $t > 0$ and all $1 \le q < \infty$,
 \[
  \big(\E \|H v_t\|_F^q\big)^{1/q} \le C k_t^2 \Big(
  (1 + k_t) \sqrt{\log{\frac{1}{m(f)}}} + \sqrt q k_t
  \Big)
 \]
\end{proposition}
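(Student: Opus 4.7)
The plan is to decompose $H v_t$ via the chain rule applied to $f_t = \Phi \circ v_t$, and then estimate the pieces separately. A direct computation gives
\[
H v_t \;=\; \frac{H f_t}{\phi(v_t)} \;+\; v_t\, \grad v_t\,(\grad v_t)^T,
\]
so by the Frobenius triangle inequality
\[
\|H v_t\|_F \;\le\; \frac{\|H f_t\|_F}{\phi(v_t)} \;+\; |v_t|\,|\grad v_t|^2.
\]
I would bound each of the two summands separately and then combine via Minkowski's inequality in $L^q$.

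The second (lower-order) summand is easy. By Lemma~\ref{lem:grad-bound-v}, $|\grad v_t| \le k_t$ pointwise, so this term is at most $k_t^2 |v_t|$. Since $v_t$ is $k_t$-Lipschitz, Lemma~\ref{lem:lipschitz-gaussian} combined with the median estimate $M_t \le C(1+k_t)\sqrt{\log(1/m(f))}$ (implicit in the proof of Lemma~\ref{lem:mean-median}) yields
\[
\bigl(\E |v_t|^q\bigr)^{1/q} \;\le\; C\bigl((1+k_t)\sqrt{\log(1/m(f))} + k_t\sqrt q\bigr),
\]
which, multiplied by $k_t^2$, already matches the shape of the claimed bound.

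The main obstacle is the principal term $\|H f_t\|_F/\phi(v_t)$: the denominator is exponentially small where $|v_t|$ is large, so no crude pointwise bound on $\|H f_t\|_F$ alone suffices. The target is a sharp pointwise estimate of the form
\[
\|H f_t\|_F(x) \;\le\; C\, k_t^2\, \phi\bigl(v_t(x)\bigr)\bigl(1 + |v_t(x)|\bigr),
\]
after which division by $\phi(v_t)$ reduces the problem to the $|v_t|$ moment bound already established above. Such an estimate is the Hessian analog of the Bakry--Ledoux gradient bound $|\grad v_t| \le k_t$, and I would try to derive it by the same monotonicity technique: identify a functional of $(H f_t, \grad v_t, v_t)$ that is non-increasing along the semigroup flow, using the commutation relation $\partial_t H f_t = L H f_t - 2 H f_t$, and verify that the resulting differential inequality is preserved, with equality in the extremal case where $f$ is an indicator of a half-space (for which $v_t$ is linear and $H f_t$ is explicitly computable as $-k_t^2 v_t \phi(v_t)$). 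Once this pointwise Hessian inequality is in place, combining it with the $L^q$ bound on $|v_t|$ via Minkowski's inequality completes the proof; the hard work is entirely in establishing the pointwise estimate.
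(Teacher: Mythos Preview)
The paper does not prove this proposition; it imports it from~\cite{MosselNeeman:12} with the remark ``which was proved in~\cite{MosselNeeman:12}.'' So there is no in-paper argument to compare against directly. That said, your outline is essentially the route taken in~\cite{MosselNeeman:12}: the chain-rule decomposition
\[
H v_t = \frac{H f_t}{\phi(v_t)} + v_t\,\grad v_t\,(\grad v_t)^T
\]
is exactly how one separates the problem there, and your treatment of the rank-one term via Lemma~\ref{lem:grad-bound-v} together with Gaussian concentration for the $k_t$-Lipschitz function $v_t$ (median controlled through Lemma~\ref{lem:mean-median}) is correct and matches the reference.

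Where your proposal is incomplete is precisely where you say it is: the pointwise estimate $\|H f_t\|_F \le C\,k_t^2\,\phi(v_t)\,(1+|v_t|)$. You correctly identify this as the Hessian analogue of the Bakry--Ledoux bound $|\grad f_t|\le k_t\,\phi(v_t)$, and your check of the half-space case is right (there $\|H f_t\|_F = k_t^2\,|v_t|\,\phi(v_t)$ exactly). But you have not actually proved the inequality; the sentence ``I would try to derive it by the same monotonicity technique'' is a plan, not an argument. In~\cite{MosselNeeman:12} this step is the nontrivial content and occupies a dedicated lemma; it is established via the integral representation of $P_t$ (differentiating twice under the integral and integrating by parts to transfer derivatives onto the Gaussian kernel), rather than by a direct semigroup-monotonicity computation of the type you sketch. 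Your proposed route via a monotone functional of $(H f_t,\grad v_t,v_t)$ may well work, but the commutation relation and the choice of functional are not obvious, and you would need to exhibit them explicitly. As written, the proposal is a correct skeleton with the main analytic lemma left as an assertion.
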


The other tool in the proof of Proposition~\ref{prop:higher-moments} is a result
of Pinelis~\cite{Pinelis:98}, which will allow us to relate moments
of $|\grad v_t(X) - \grad v_t(Y)|$ to moments of $\|H v_t\|_F$.


\begin{proposition}\label{prop:frob-norm-concentration}
  Let $h: \R^n \to \R^k$ be a $C^1$ function and let $D h$ be the
  $n \times k$ matrix of its partial derivatives. 
  If $Z_1$ and $Z_2$ are independent, standard Gaussian vectors in $\R^n$
  then
  \[
    \big(\E |h(Z_1) - h(Z_2)|^q \big)^{1/q}
    \le C \sqrt q \big( \E \|Dh\|_F^q\big)^{1/q}
  \]
  for every $1 \le q < \infty$, where $C$ is a universal constant.
\end{proposition}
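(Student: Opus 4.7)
The plan is to represent $h(Z_1)-h(Z_2)$ as an integral of $(Dh)^T$ applied to a Gaussian vector along a rotation in the $(Z_1,Z_2)$-plane, and then reduce to a conditional Gaussian-concentration bound for the Euclidean norm of $A^T Z_2$. Concretely, I would define for $\gamma \in [0,\pi/2]$
\begin{align*}
Z(\gamma) &= \cos\gamma\,Z_1 + \sin\gamma\,Z_2, \\
Z'(\gamma) &= -\sin\gamma\,Z_1 + \cos\gamma\,Z_2.
\end{align*}
A covariance computation shows that for every $\gamma$, the pair $(Z(\gamma),Z'(\gamma))$ consists of independent standard Gaussian vectors in $\R^n$, and in fact $(Z(\gamma),Z'(\gamma)) \stackrel{d}{=} (Z_1,Z_2)$. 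Moreover $Z(0)=Z_1$, $Z(\pi/2)=Z_2$, and $\frac{d}{d\gamma}Z(\gamma) = Z'(\gamma)$, so the chain rule yields the identity
\[
h(Z_1) - h(Z_2) = -\int_0^{\pi/2} Dh(Z(\gamma))^T Z'(\gamma)\, d\gamma.
\]

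Applying Minkowski's integral inequality to this representation, together with the distributional equality above, reduces the problem to showing
\[
\bigl(\E |h(Z_1)-h(Z_2)|^q\bigr)^{1/q}
\le \tfrac{\pi}{2}\,\bigl(\E |Dh(Z_1)^T Z_2|^q\bigr)^{1/q}
\le C\sqrt q\,\bigl(\E \|Dh\|_F^q\bigr)^{1/q}.
\]
For the second inequality I would condition on $Z_1$ and set $A = Dh(Z_1)$. Given $Z_1$, the vector $A^T Z_2$ is centered Gaussian in $\R^k$ with covariance $A^T A$, so that $\E[|A^T Z_2|^2\mid Z_1] = \tr(A^T A) = \|A\|_F^2$. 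The map $Z_2 \mapsto |A^T Z_2|$ is $\|A\|_{\mathrm{op}}$-Lipschitz on $\R^n$, so by the classical Gaussian concentration inequality for Lipschitz functions,
\[
\bigl(\E[|A^T Z_2|^q \mid Z_1]\bigr)^{1/q} \le \|A\|_F + C\sqrt q\,\|A\|_{\mathrm{op}} \le C'\sqrt q\,\|Dh(Z_1)\|_F,
\]
using $\|A\|_{\mathrm{op}} \le \|A\|_F$ and $q \ge 1$. Taking the $L^q$-norm in $Z_1$ and combining with the two previous displays gives the claim.

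The argument has no substantial obstacle: the main point to check is that the chain-rule identity is rigorous, which is clean because $h$ is assumed $C^1$ (and if one worries about integrability, $h$ can first be approximated by functions with bounded derivatives on a large ball and the bound passed to the limit by Fatou). The only step that might look delicate, namely the $L^q$ bound on $|A^T Z_2|$, is really a standard consequence of Gaussian concentration for Lipschitz functions. This is essentially the vector-valued analogue of the well-known scalar interpolation proof of the Gaussian Poincaré-type inequality $\|f(Z_1)-f(Z_2)\|_{L^q} \le \tfrac{\pi}{2}\|\grad f(Z_1)\cdot Z_2\|_{L^q}$.
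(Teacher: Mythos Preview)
Your proof is correct and takes essentially the same approach as the paper: the paper cites Pinelis's comparison inequality $\E \Psi(f(Z)) \le \E \Psi\bigl(\tfrac{\pi}{2} Df(Z)\cdot \tilde Z\bigr)$ for convex $\Psi$ as a black box, applies it with $\Psi(x)=|x|^q$, and then bounds $\E|AZ_2|^q$ by $(C\sqrt q)^q \|A\|_F^q$ conditionally on $Z_1$---your rotation/interpolation argument is precisely the Pisier-style proof of that Pinelis bound in the case $\Psi(\cdot)=|\cdot|^q$. The only difference is that you write out the interpolation step directly rather than citing it; the final conditional moment bound is the same.
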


\begin{proof}
 Define $f: \R^{2n} \to \R^k$ by $f(Z) = h(Z_1) - h(Z_2)$ where $Z = (Z_1, Z_2)$.
 Pinelis~\cite{Pinelis:98} showed that if $\Psi: \R^k \to \R$ is a convex function
 then for any function $f: \R^{2n} \to \R^k$ with $\E f = 0$,
 \[
  \E \Psi(f(Z)) \le \E
  \Psi\Big(\frac{\pi}{2} Df(Z) \cdot \tilde Z\Big),
 \]
 where $\tilde Z$ is an independent copy of $Z$.
 Applying this with $\Psi(x) = |x|^q$, and noting that $D f =
 (\begin{smallmatrix}1 & 0 \\ 0 & -1\end{smallmatrix}) \otimes D h$, we obtain
 \[
  \E |f(Z)|^q \le C^q \E |Dh(Z_1) \cdot Z_2|^q.
 \]
 Now, $\E |A Z_2|^q \le (C\sqrt q)^{q/2} \|A\|_F$ for any fixed matrix $A$;
 if we apply this fact conditionally on $Z_1$, then we obtain
 \[
  \E |f(Z)|^q \le (C\sqrt q)^q \E \|Dh\|_F^q. \qedhere
 \]
\end{proof}

\begin{proof}[Proof of Proposition~\ref{prop:higher-moments}]
  Let $Z, Z_1$ and $Z_2$ be independent standard Gaussians on $\R^n$;
  set $X = \sqrt \rho Z + \sqrt{1-\rho} Z_1$ and $Y = \sqrt \rho Z + \sqrt{1-\rho} Z_2$ so
  that $X$ and $Y$ are standard Gaussians with correlation $\rho$. Conditioned on $Z$,
  define the function
  \[
   h(x) = \grad v_t(\sqrt Z + \sqrt{1-\rho} x), 
  \]
  so that $h(Z_1) = \grad v_t(X)$ and $h(Z_2) = \grad v_t(Y)$. Note that
  \[
   (D h)(x) = \sqrt{1-\rho} (H v_t)(\sqrt \rho Z + \sqrt{1-\rho} x);
  \]
  thus Proposition~\ref{prop:frob-norm-concentration} (conditioned on $Z$)
  implies that
  \[
   \E \big(|\grad v_t(X) - \grad v_t(Y)|^q \mid Z\big)
   \le \big(C \sqrt{q(1-\rho)}\big)^q \E \big(\|H v_t(X)\|_F^q \mid Z\big).
  \]
  Integrating out $Z$ and raising both sides to the power $1/q$, we have
  \[
    \big(\E |\grad v_t(X) - \grad v_t(Y)|^q\big)^{1/p}
    \le C \sqrt{q(1-\rho)} \big( \E \|H v_t\|_F^q\big)^{1/q}.
  \]
  We conclude by applying Proposition~\ref{prop:hessian-moments} to the right
  hand side.
\end{proof}

With the first two steps of our outline complete, we are ready to prove
Proposition~\ref{prop:1-func-large-t}. This proof is much like
the proof of Proposition~\ref{prop:2-funcs-large-t}, except that it uses
Propositions~\ref{prop:1-func-holder} and~\ref{prop:higher-moments} in
the appropriate places.

\begin{proof}[Proof of Proposition~\ref{prop:1-func-large-t}]
For any non-negative random variable $Z$ and any $0 < \alpha < 2$, $0 < r < 1$,
H\"older's inequality applied
with $p = 2r/\gamma$ implies that
\[
 \E Z^2 = \E Z^{\gamma} Z^{2-\gamma}
 \le \big(\E Z^{2r}\big)^{\gamma/(2r)} \big(\E Z^{2r(2-\gamma)/(2r-\gamma)}\big)^{(2r-\gamma)/(2r)}.
\]
In particular, if we set $q = 2r(2-\gamma)/(2r-\gamma)$ then we obtain
\begin{equation}\label{eq:holder}
 \big(\E Z^{2r}\big)^{1/r}
 \ge \left(\frac{\E Z^2}{\big(\E Z^q\big)^{(2-\gamma)/q}}\right)^{2/\gamma}.
\end{equation}
Now, set $Z = |\grad v_t(X) - \grad v_t(Y)|$, $a = \E \grad v_t$
and $\epsilon(v_t) = \E (v_t(X) - \inr{X}{a} - \E v_t)^2$.
Lemma~\ref{lem:2-funcs-poincare}
and Proposition~\ref{prop:higher-moments} then
imply that the right-hand side of~\eqref{eq:holder} is at least
\begin{multline*}
\left(
 \frac{2(1-\rho)\epsilon(v_t)}
 {\Big(c k_t^2 \sqrt{q(1-\rho)}\big((1+k_t) \sqrt{\log (1/m(f))} + \sqrt q k_t\big)\Big)^{2-\gamma}}
 \right)^{2/\gamma} \\
 = c \sqrt{1-\rho} \left(
  \frac{\epsilon(v_t)}{\Big(k_t^2 \sqrt q \big((1+k_t) \sqrt{\log (1/m(f))} + \sqrt q k_t\big)\Big)^{2-\gamma}}
 \right)^{2/\gamma}
\end{multline*}
Now define $\eta = 8 k_t^2 / (1 + 8 k_t^2)$ and choose $r = 1 - \eta$ (so as to satisfy the
hypothesis of Proposition~\ref{prop:2-funcs-holder}).
If we then define $\gamma = 2r - \alpha \eta = 2 - (2 + \alpha) \eta$ for some $0 < \alpha < 1$,
we will find that $q = 2r \frac{2+\alpha}{\alpha} \le 6/\alpha$.
In particular, the last displayed quantity is at least
\[
 (1-\rho) (c \alpha)^{(2-\gamma)/\gamma} \frac{\epsilon(v_t)^{2/\gamma}}{\big((k_t^3 + 1)\sqrt{\log (1/m(f))}\big)^{(2-\gamma)/\gamma}}
\]
Since $(k_t^3 + 1)^{(2-\gamma)/\gamma}$ depends only on $t$, we can put this all together
(going back to~\eqref{eq:holder}) to obtain
\begin{align*}
 \big(\E |\grad v_t(X) - \grad v_t(Y)|^{2r}\big)^{1/r}
 &\ge c(t, \alpha) (1-\rho) \frac{\epsilon(v_t)^{2/\gamma}}{\log^{C(t)} (1/m(f))} \\
 &= c(t, \alpha) (1-\rho) \frac{\epsilon(v_t)^{\frac{1 + 8k_t^2}{1 - 4\alpha k_t^2}}}{\log^{C(t)} (1/m(f))}.
\end{align*}
Combined with Proposition~\ref{prop:1-func-holder}, this implies
\begin{align}
 \diff{R_t}{t}
 &\ge
 c(t) \rho \sqrt{1-\rho} \frac{m(f)^{(1+k_t)^2}}{\log^{C(t)} (1/m(f))} \epsilon(v_t)^{\frac{1 + 8k_t^2}{1 - 4\alpha k_t^2}}
 \notag \\
 &\ge
 c(t, \alpha) \rho \sqrt{1-\rho} m(f)^{(1+k_t)^2 + \alpha} \epsilon(v_t)^{\frac{1 + 8k_t^2}{1 - 4\alpha k_t^2}},
 \label{eq:1-func-large-t-diff-R_t}
\end{align}
where the last line follows because for every $\alpha > 0$ and every $C$, there is a $C'(\alpha)$ such that
for every $x \le \frac 14$,
$\log^C(1/x) \le C'(\alpha) x^{-\alpha}$.
Now, with~\eqref{eq:1-func-large-t-diff-R_t} as an analogue of~\eqref{eq:large-t-diff-R_t}, we complete
the proof by following that of Proposition~\ref{prop:1-func-large-t}. Let us reiterate
the main steps: recalling that
$\delta = \int_0^\infty \diff{R_s}{s}\ ds$, we see that
for any $\alpha, t > 0$, there is
some $s \in [t, t(1+\alpha)]$ so that $\diffat{R_t}{t}{s} \le \frac{\delta}{\alpha t}$.
By~\eqref{eq:1-func-large-t-diff-R_t} applied with $t=s$, we have
\[
 \epsilon(v_s) \le C(t, \alpha) m^{\frac{(1 + k_t)^2(1-4\alpha k_t^2)}{1 + 8 k_t^2} - \alpha}
 \Big( \frac{\delta}{\rho\sqrt{1-\rho}} \Big)^{\frac{1-4\alpha k_t^2}{1+8k_t^2}}.
\]
Now, note that $\Phi$ is a contraction, and so Lemma~\ref{lem:partial-pullback} implies
that
\begin{multline*}
 \E \big(f_t(X) - P_{s-t}^{-1} \Phi(\inr{X}{\E \grad v_s} - \E v_s)\big)^2 \\
 \le C(t, \alpha) m^{\frac{(1 + k_t)^2(1-4\alpha k_t^2)}{1 + 8 k_t^2} - \alpha}
 \Big( \frac{\delta}{\rho\sqrt{1-\rho}} \Big)^{\frac{1-4\alpha k_t^2}{1+8k_t^2} - \alpha}.
\end{multline*}
By changing $\alpha$ and adjusting $C(t, \alpha)$ accordingly, we can put this inequality
into the form that was claimed in the proposition.

Finally, recall that $|\E \grad v_s| \le k_s$ by Lemma~\ref{lem:grad-bound-v}, and so
$P_{s-t}^{-1} \Phi(\inr{X}{\E \grad v_s} - \E v_s)$ can be written in the form
$\Phi(\inr{X}{a} - b)$ for some $a \in \R^n$, $b \in \R$ with $|a| \le k_t$.
\end{proof}

\subsection{On the monitonicity of $\delta$ with respect to $\rho$}

The final step in the proof of Theorem~\ref{thm:1-func-robustness} is to
improve the application of Lemma~\ref{lem:norm-noise-sens}.
Assuming, for now, that $f$ is the indicator function of a set $A$, the hypothesis
of Theorem~\ref{thm:1-func-robustness} tells us if $e^{-t} = \rho$ then
$\E 1_A P_t 1_A$ is almost as large as possible; that is, it is almost as 
large as $\E 1_B P_t 1_B$ where $B$ is a half-space of probability $\P(A)$.
This assumption allows us to apply Lemma~\ref{lem:norm-noise-sens}, but only
with $t = \log(1/\rho)$. In particular, this means that we will need to use this
value of $t$ in Proposition~\ref{prop:1-func-large-t}, which implies a poor
dependence on $\rho$ in our final answer.

To avoid all these difficulties, we will follow Kane~\cite{Kane:11}
and Kindler and O'Donnell~\cite{KindlerOdonnell:12}
to show if $\E 1_A P_t 1_A$ is almost as large as possible for $t = \log(1/\rho)$, then
it is also large for certain values of $t$ that are larger.

\begin{proposition}\label{prop:change-rho}
Suppose $A \subset \R^n$ has $\P(A) = 1/2$.
 If $\theta = \cos (k \cos^{-1} \rho)$ for some $k \in \N$, and
 \[
 J(1/2, 1/2; \rho) - \E_\rho J(1_A(X), 1_A(Y); \rho) \le \delta
 \]
 then
 \[
 J(1/2, 1/2; \theta) - \E_\theta J(1_A(X), 1_A(Y); \theta) \le k \delta
 \]
\end{proposition}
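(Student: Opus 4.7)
The plan is to adapt the subadditivity idea of Kane~\cite{Kane:11} and Kindler--O'Donnell~\cite{KindlerOdonnell:12}. Setting $\alpha = \cos^{-1}\rho$, the Chebyshev identity $\cos(k\alpha) = T_k(\rho)$ suggests realizing $\theta$-correlated Gaussians as the endpoints of a chain of $k+1$ Gaussians in which each consecutive pair is $\rho$-correlated. Concretely, I would let $Z_1, Z_2$ be independent standard Gaussian vectors in $\R^n$ and, for $i = 0, 1, \ldots, k$, set
\[
X_i = \cos(i\alpha)\, Z_1 + \sin(i\alpha)\, Z_2.
\]
Each $X_i$ is then a standard Gaussian vector with $\E X_i X_j^T = \cos((j-i)\alpha)\, I_n$; in particular, consecutive pairs $(X_i, X_{i+1})$ are $\rho$-correlated while the endpoints $(X_0, X_k)$ are $\theta$-correlated.

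Next I would apply the union bound to the inclusion $\{X_0 \in A,\, X_k \notin A\} \subseteq \bigcup_{i=0}^{k-1}\{X_i \in A,\, X_{i+1} \notin A\}$, obtaining
\[
\Pr_\theta(X \in A,\, Y \notin A) \;\le\; k\,\Pr_\rho(X \in A,\, Y \notin A).
\]
The advantage of the circle construction (as opposed to a Markovian chain, which would yield $\rho^k$ rather than $T_k(\rho)$) is that this union bound becomes an \emph{equality} when $A$ is a half-space $B$ of measure $1/2$: Sheppard's formula gives $\Pr_\sigma(X \in B, Y \notin B) = (\cos^{-1}\sigma)/(2\pi)$ for every correlation $\sigma$, so $\Pr_\theta(X \in B, Y \notin B) = k\alpha/(2\pi) = k\,\Pr_\rho(X \in B, Y \notin B)$.

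Finally, to reinterpret the resulting estimate in terms of $J$, I would use the hypothesis $\P(A) = 1/2$ together with $J(1,1) = 1$ and $J(1,0) = J(0,1) = J(0,0) = 0$, which give $\E_\sigma J(1_A(X), 1_A(Y); \sigma) = \Pr_\sigma(X,Y \in A) = \tfrac12 - \Pr_\sigma(X \in A, Y \notin A)$, and likewise with $A$ replaced by $B$. Subtracting the half-space equality from the union bound then yields
\[
J(\tfrac12, \tfrac12; \theta) - \E_\theta J(1_A(X), 1_A(Y); \theta) \;\le\; k\bigl[J(\tfrac12, \tfrac12; \rho) - \E_\rho J(1_A(X), 1_A(Y); \rho)\bigr] \;\le\; k\delta.
\]
The one subtlety I would have to check is that Sheppard's formula really produces equality (and not strict inequality) for half-spaces at correlation $\theta$; this needs $k\alpha \le \pi$, which is the regime where the statement is interesting anyway, since otherwise $k\delta$ easily exceeds the trivial upper bound on the gap.
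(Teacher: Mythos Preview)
Your proposal is correct and is essentially the paper's own argument: the circle construction $X_i = \cos(i\alpha)Z_1 + \sin(i\alpha)Z_2$ is exactly the paper's $Z(i\gamma)$, the union bound on crossings is identical, and the translation back to $J$ via $\E_\sigma J(1_A(X),1_A(Y);\sigma)=\tfrac12-\Pr_\sigma(X\in A,\,Y\notin A)$ is the same rearrangement the paper performs. You even flag the implicit assumption $k\alpha\le\pi$ needed for the half-space equality $\cos^{-1}\theta=k\alpha$, which the paper uses without comment (and which is harmless in the paper's application, where $k\alpha<\pi/2$).
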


\begin{proof}
 Let $Z_1$ and $Z_2$ be independent standard Gaussians on $\R^n$ and define
 $Z(\gamma) = Z_1 \cos \gamma + Z_2 \sin \gamma$. Note that for any $\gamma$ and any
 $j \in \N$, $Z((j+1)\gamma)$ and $Z(j\gamma)$ have correlation $\cos \gamma$.
 In particular, if
 $\gamma = \cos^{-1}(\rho)$, then the union bound implies that
 \begin{align}
  \P_\theta(X \in A, Y \not \in A)
  &= \Pr(Z(0) \in A, Z(k\gamma) \not \in A) \notag \\
  &\le \sum_{j=0}^{k-1} \Pr(Z(j\gamma) \in A, Z((j+1)\gamma) \not \in A) \notag \\
  &= k \Pr_\rho(X \in A, Y \not \in A).\label{eq:union-bound}
 \end{align}
 The remarkable thing about this inequality is that it becomes equality when
 $A$ is a half-space of measure $1/2$, because in this case,
 $\Pr_\rho(X \in A, Y \not \in A) = \frac{1}{2\pi} \cos^{-1}(\rho)$.
 
 Recall that $\E_\rho J(1_A(X), 1_A(Y); \rho) = \Pr_\rho(X \in A, Y \in A)$.
 Thus, the hypothesis of the proposition can be rewritten as
 \[
  \Big(\frac{1}{2} - \frac{1}{2\pi} \cos^{-1}(\rho)\Big)
  - \Big(\P(A) - \Pr_\rho(X \in A, Y \not \in A)\Big) \le \delta,
 \]
 which rearranges to read
 \[
  \Pr_\rho(X \in A, Y \not \in A) \le \delta + \frac{1}{2\pi} \cos^{-1} \rho.
 \]
 By~\eqref{eq:union-bound}, this implies that
 \[
  \P_\theta(X \in A, Y \not \in A) \le k\delta + \frac{1}{2\pi} \cos^{-1} \theta,
 \]
 which can then be rearranged to yield the conclusion of the proposition.
\end{proof}

Let us point out two deficiencies in Proposition~\ref{prop:change-rho}:
the requirement that $\P(A) = 1/2$ and that $k$ be an integer. The first of
these deficiencies is responsible for the assumption $\E f = \frac{1}{2}$
in Theorem~\ref{thm:1-func-robustness}, and the second one prevents us from obtaining a
better constant in the exponent of $\delta$. 
Both of these restrictions come from the subadditivity
condition~\eqref{eq:union-bound}, which only makes sense for an integer $k$, and
only achieves equality for a half-space of volume $\frac 12$.
But beyond the fact that our proof fails, we have no reason not to believe that
some version of Proposition~\ref{prop:change-rho} is true without these restrictions.
In particular, we make the following conjecture:

\begin{conjecture}\label{conj:monotonicity}
 There is a function $k(\rho, a)$ such that
 \begin{itemize}
 \item for any fixed $a \in (0, 1)$, $k(\rho, a) \sim \sqrt{1-\rho}$ as $\rho \to 1$;
 \item for any fixed $a \in (0, 1)$, $k(\rho, a) \sim \rho$ as $\rho \to 0$; and
  \item for any $a \in (0, 1)$ and any $A \subset \R^n$ the quantity
  \[
    \frac{J(a, a; \rho) - \E_\rho J(1_A(X), 1_A(Y); \rho)\big)}{k(\rho, a)}
  \]
  is increasing in $\rho$.

 \end{itemize}
\end{conjecture}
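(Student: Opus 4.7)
The plan is to first guess a concrete candidate for $k(\rho, a)$, since the conjecture is really asking whether a natural normalizing function of the deficit exists. Writing $S(\rho) = J(a,a;\rho)$ and $c = \Phi^{-1}(a)$, one computes $S'(\rho) = \frac{1}{2\pi\sqrt{1-\rho^2}}\exp\bigl(-\frac{c^2}{1+\rho}\bigr)$, which grows like $(1-\rho)^{-1/2}$ as $\rho \to 1$ and equals $I(a)^2 = \phi(c)^2$ at $\rho = 0$. Therefore the candidate $k(\rho, a) := \rho/S'(\rho)$ satisfies $k(\rho, a) \sim \rho / I(a)^2$ as $\rho \to 0$ and $k(\rho, a) \sim 2\pi e^{c^2/2}\sqrt{2(1-\rho)}$ as $\rho \to 1$, matching both required asymptotic regimes up to constants. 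With this candidate in hand, the conjecture becomes: $F(\rho) S'(\rho)/\rho$ is monotone increasing in $\rho$, where $F(\rho) = S(\rho) - \E_\rho J(1_A(X), 1_A(Y); \rho)$.

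Next I would try to attack this monotonicity using the semigroup representation, as in the main body of the paper. Writing $\rho = e^{-t}$ yields the identities $\E_\rho J(1_A(X), 1_A(Y); \rho) = \|P_{t/2} 1_A\|_2^2$ and $S(\rho) = \|P_{t/2} 1_B\|_2^2$ for the half-space $B$ of measure $a$, from which one obtains
\[
F'(\rho) = \frac{1}{\rho}\bigl(\E|\grad P_{t/2} 1_B|^2 - \E|\grad P_{t/2} 1_A|^2\bigr).
\]
An explicit Gaussian calculation shows $\E|\grad P_{t/2} 1_B|^2 = \rho S'(\rho)$, so the conjectured monotonicity of $F S'/\rho$ is equivalent to an inequality purely between $\E|\grad P_{t/2} 1_A|^2$, $\E|\grad P_{t/2} 1_B|^2$, $F$, and various derivatives of $S$. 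My hope would be to reduce this to a clean ``$\diff{}{t}(\text{something}) \ge 0$'' statement along the semigroup flow, as in Lemma~\ref{lem:diff-R_t}, or to match it with a pointwise gradient comparison for $P_{t/2} 1_A$ vs.\ $P_{t/2} 1_B$.

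A parallel line of attack would be to find a continuous analog of the union-bound argument in Proposition~\ref{prop:change-rho}. Parametrizing by the arclength $\gamma = \cos^{-1}\rho$ and recalling that in Proposition~\ref{prop:change-rho} the key was subadditivity of $h_A(\gamma) := \P_{\cos\gamma}(X \in A, Y \notin A)$ together with the identity $h_B(\gamma) = \gamma/(2\pi)$ for the $a = 1/2$ half-space, one would try to prove an infinitesimal form: that the ``instantaneous crossing rate'' $\lim_{h \to 0^+} h^{-1} \P_{\cos(\gamma + h)}(X \in A, Y \notin A \mid Y = Z(\gamma))$ is minimized (up to the right $a$-dependent factor) by half-spaces.

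The hard part -- and the reason the authors phrased this only as a conjecture -- is the case $a \ne 1/2$. For such $a$, $h_B(\gamma) = a - J(a, a; \cos\gamma)$ is strictly convex in $\gamma$, so the union bound~\eqref{eq:union-bound} fails to be tight against half-spaces and cannot directly yield sharp constants. Correspondingly, in the differential formulation, the ratio $k'/k$ has a specific shape determined by the half-space profile, and one must show that $F'/F$ dominates it \emph{uniformly} in $A$. I expect this to require a new Gaussian functional inequality -- perhaps obtained by combining an Ehrhard-type symmetrization (to reduce to $n = 1$ and to monotone rearrangements of $A$ along $e_1$) with a careful analysis of the one-dimensional problem, where $A$ is a union of intervals and everything reduces to comparing elementary Gaussian integrals. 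Even in dimension one, however, proving such an inequality for all $a$ is the step where I would expect the argument to stall, and where a genuinely new idea beyond those in the present paper seems to be needed.
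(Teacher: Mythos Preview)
This statement is labeled a \emph{conjecture} in the paper, and the paper does not prove it; the authors explicitly present it as an open problem arising from the limitations of the subadditivity argument in Proposition~\ref{prop:change-rho}. There is therefore no ``paper's own proof'' to compare your attempt against.

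Your proposal is not a proof either, and you are candid about this: you correctly isolate the obstruction (for $a \ne 1/2$ the half-space profile $h_B(\gamma)$ is strictly convex in $\gamma$, so the union-bound inequality~\eqref{eq:union-bound} is no longer tight and cannot be sharpened by the same mechanism), and you end by saying a new idea seems to be needed. That assessment matches the paper's. Your candidate $k(\rho,a) = \rho/S'(\rho)$ is a natural normalization and does have the asserted asymptotics, and your semigroup reformulation of the monotonicity as a comparison of Dirichlet energies $\E|\nabla P_{t/2}1_A|^2$ versus $\E|\nabla P_{t/2}1_B|^2$ is a sensible starting point. But none of the steps you sketch closes the gap: the ``pointwise gradient comparison'' you allude to is not available (there is no reason $|\nabla P_{t/2}1_A|$ should be dominated by the half-space analogue), and the infinitesimal crossing-rate argument runs into exactly the convexity obstruction you yourself flag. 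So the honest summary is that your write-up is a reasonable discussion of possible approaches and their difficulties, consistent with the paper's view that the statement is genuinely open, but it is not a proof and should not be presented as one.
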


If this conjecture were true, it would tell us that sets which are almost optimal
for some $\rho$ are also almost optimal for smaller $\rho$, where the function $k(\rho, a)$
quantifies the almost optimality.

In any case, let us move on to the proof of Theorem~\ref{thm:1-func-robustness}.
If the conjecture is true, then the following proof will directly benefit from
the improvement.

\begin{proof}[Proof of Theorem~\ref{thm:1-func-robustness}]
We will prove the theorem when $f$ is the indicator function of a set $A$.
The extension to general $f$ follows from the same argument that was made in
the proof of Theorem~\ref{thm:2-funcs-robustness}.

Fix $\epsilon > 0$. If $\rho_0$ is close enough to $1$ then for
every $\rho_0 < \rho < 1$, there is a $k \in \N$ such that
$k \cos^{-1}(\rho) \in
[\frac{\pi}{2} - \epsilon, \frac{\pi}{2} - \frac{\epsilon}{2}]$.
In particular, this means that $\cos(k \cos^{-1}(\rho))
\in [c_1(\epsilon), c_2(\epsilon)]$,
where $c_1(\epsilon)$ and $c_2(\epsilon)$ converge to zero
as $\epsilon \to 0$.
Moreover, this $k$ must satisfy
\[
 k \le \frac{C(\epsilon)}{\cos^{-1}(\rho)} \le \frac{C(\epsilon)}{\sqrt{1-\rho}}.
\]
Now let $\theta = \cos(k \cos^{-1}(\rho))$. By Proposition~\ref{prop:change-rho}, $A$ 
satisfies
\[
 J(1/2, 1/2; \theta) - \E_\theta J(1_A(X), 1_A(Y); \theta) \le C(\epsilon) \frac{\delta}{\sqrt{1-\rho}}.
\]
Now we will apply Proposition~\ref{prop:1-func-large-t} with $\rho$ replaced
by $\theta$ and $t = \log(1/\theta)$. Since $\theta \le c_2(\epsilon)$,
it follows that $k_t = \theta/\sqrt{1-\theta^2} \le c_3(\epsilon)$
(where $c_3(\epsilon) \to 0$ with $\epsilon$). Thus, the conclusion
of Proposition~\ref{prop:1-func-large-t} gives us $a \in \R^n$, $b \in R$ such that
\begin{align}
 \E \big((P_t 1_A)(X) - \Phi(\inr{X}{a} - b)\big)^2
 &\le C \Big(\frac{\delta}{\theta \sqrt{(1-\theta)(1-\rho)}}\Big)^{1 - c_4(\epsilon)} \notag \\
 &\le C(\epsilon) \Big(\frac{\delta}{\sqrt{1-\rho}}\Big)^{1 - c_4(\epsilon)}.
\label{eq:1-func-robustness-large-t}
\end{align}

Now we apply the same time-reversal argument as in Theorem~\ref{thm:2-funcs-robustness}:
Lemma 3.1 implies that there is some $s > 0$ and a half-space $B$ such that
\[
 \E (P_t 1_A - P_{t+s} 1_B)^2 \le C(\epsilon) (\delta/\sqrt{1-\rho})^{1 - c_4(\epsilon)}
\]
and we can assume, at the cost of increasing $C(\epsilon)$, that $\P(B) = \P(A)$. Then
Proposition~\ref{prop:small-s} implies that
\[
 \E (P_t 1_A - P_t 1_B)^2 \le C(\epsilon) (\delta/\sqrt{1-\rho})^{1 - c_4(\epsilon)},
\]
and we apply Proposition~\ref{prop:time-rev} (recalling that
$t$ is bounded above and below by constants depending on $\epsilon$)
to conclude that
\[
 \P(A \symdiff B) \le C(\epsilon) (\delta/\sqrt{1-\rho})^{1/4 - c_4(\epsilon) / 4}.
\]
Recall that $c_4(\epsilon)$ is some quantity tending to zero with $\epsilon$.
Therefore, we can derive the claim of the theorem from the equation
above by modifying $C(\epsilon)$.
\end{proof}

Finally, we will prove Corollary~\ref{cor:1-func-robustness}.
\begin{proof}[Proof of Corollary~\ref{cor:1-func-robustness}]
 Since $x y \le J(x, y)$, the hypothesis of
 Corollary~\ref{cor:1-func-robustness}
 implies that
 \[
  \E J(f(X), f(Y)) \ge \frac{1}{4} + \frac{1}{2\pi} \arcsin(\rho) - \delta.
  \]
  Now, consider Theorem~\ref{thm:1-func-robustness} with $\epsilon = 1/8$.
  If $\rho > \rho_0$ then apply it; if not, apply
  Theorem~\ref{thm:2-funcs-robustness}. In either case, the conclusion
  is that there is some $a \in \R^n$ such that
  \[
  \E |f(X) - \Phi(\inr{X}{a})| \le C(\rho) \delta^c.
  \]
  Setting $g(X) = \Phi(\inr{X}{a})$, H\"older's inequality implies that
  \begin{align*}
   \big|\E g(X) g(Y) - \E f(X) f(Y)\big|
   &= \big|\E (g(X) - f(X)) g(Y) + \E f(X)(g(Y) - f(Y))\big| \\
   &\le 2 \E |f - g|.
  \end{align*}
  In particular,
  \begin{equation}\label{eq:g-almost-optimal}
   \E g(X) g(Y) \ge \frac{1}{4} + \frac{1}{2\pi} \arcsin(\rho)
   - \delta - C(\rho) \delta^c.
  \end{equation}
  But the left hand side can be computed exactly:
  if $|a| = (e^{2t} - 1)^{-1/2}$
  and $A = \{x \in \R^n: x_1 \le 0\}$
  then
  \begin{align*}
   \E g(X) g(Y)
   &= \E P_t 1_A(X) P_t 1_A(Y) \\
   &= \E 1_A(X) P_{2t-\log(\rho)} 1_A(X) \\
   &= \frac{1}{4} + \frac{1}{2\pi} \arcsin(e^{-2t} \rho) \\
   &\le \frac{1}{4} + \frac{1}{2\pi} \arcsin(\rho)
   - \frac{1}{2\pi} \rho (1-e^{-2t}),
  \end{align*}
  where the last line used the fact that the derivative of $\arcsin$
  is at least 1. Combining this with~\eqref{eq:g-almost-optimal}, we have
  \begin{equation}\label{eq:small-t}
   1 - e^{-2t} \le C(\rho) \delta^c
  \end{equation}
  On the other hand,
  \[
   \E |g - 1_A| = 2(1/2 - \E g 1_A) = \frac{1}{2} - \frac{1}{\pi} \arcsin(e^{-t}) \le \sqrt{1 - e^{-2t}},
  \]
  which combines with~\eqref{eq:small-t} to prove that
  $\E |g - 1_A| \le C(\rho) \delta^c$. Applying the triangle inequality,
  we conclude that
  \[
   \E |f - 1_A| \le \E |f - g| + \E |g - 1_A| \le C (\rho) \delta^c.
   \qedhere
  \]
\end{proof}

\section{The robust ``majority is stablest'' theorem}

In this section, we prove Theorem~\ref{thm:robust-mist}.
For the rest of this section, we set
$\xi$ and $\sigma$ to be uniformly random elements in $\{-1, 1\}^n$ satisfying $\E \xi_i \sigma_i = \rho$
for all $i$.

We begin the proof of Theorem~\ref{thm:robust-mist} by recalling some Fourier-theoretic properties of
$\{-1, 1\}^n$.
For $S \subset [n]$, define $\chi_S: \{-1, 1\}^n \to \{-1, 1\}$ by $\chi_S(x) = \prod_{i \in S} x_i$.
Then $\{\chi_S: S \subset [n]\}$ form an orthonormal basis of $L_2(\{-1, 1\}^n)$. We will write $\hat f_S$
for the coefficients of $f$ in this basis; that is,
\begin{equation}\label{eq:fourier}
 f(x) = \sum_{S \subset [n]} \hat f_S \chi_S(x).
\end{equation}
Recall the Bonami-Beckner semigroup $Q_t$ defined by
\[
 (Q_t f)(\xi) = \E_{e^{-t}} (f(\sigma) \mid \xi),
\]
and denote $Q_t f$ by $f_t$; then
\[
 \Stab_\rho(f) = \E_\rho f(\xi) f(\sigma) = \E f f_{\log(1/\rho)}.
\]

\subsection{The invariance principle}

Note that any function $f: \{-1, 1\}^n \to \R$ can be extended to a multilinear function
from on $\R^n$ through the Fourier expansion~\eqref{eq:fourier}: since $\chi_S(x)$ is defined for all $x \in \R^n$,
we may define $g(x)$ for $x \in \R^n$ by $g(x) = \sum_S \hat f_S \chi_S(x)$.
We will say that $g$ is the \emph{multilinear extension of $f$}; note that
$g$ and $f$ agree on $\{-1, 1\}^n$, thereby justifying the term ``extension.''

Let us remark on some well-known and important properties of multlinear polynomials.
First of all, since $\E \xi_i = \E X_i = 0$ and $\E \xi_i^2 = \E X_i^2 = 1$, it
is trivial to check that for multlinear functions $f$ and $g$,
\begin{align*}
 \E f(\xi) &= \E f(X) \\
 \E f^2(\xi) &= \E f^2(X) \\
 \E_\rho f(\xi) g(\sigma) &= \E f(X) g(Y).
\end{align*}
It is also easy to check that if $f$ is a multilinear polynomial then for any $t > 0$,
$Q_t f$ and $P_t f$ are the same polynomial. In particular, there is no ambiguity
in using the notation $f_t$ for both $P_t f$ and $Q_t f$.

Despite these similarities,
$g(X)$ and $g(\xi)$ can have very different distributions in
general (for example, if $g(x) = x_1$).
The main technical result of~\cite{MoOdOl:10}
is that when $f$ has low influence and $t > 0$, then $f_t(X)$ and
$f_t(\xi)$ have similar distributions.
We will quote a much less general statement then
the one proved in~\cite{MoOdOl:10}, which will nevertheless be sufficient for
our purposes. In particular, we will only need to know that if
$g(\xi)$ takes values in $[0, 1]$, then $g(X)$ mostly takes values in
$[0, 1]$.
Before stating the theorem from~\cite{MoOdOl:10},
let us introduce some notation: for a function $f$ taking values in $\R$,
let $\bar f$ be its truncation which takes values in $[0, 1]$:
\[
 \bar f(x) = 
   \begin{cases}
    0 & \text{ if $f(x) < 0$ } \\
    f(x) & \text{ if $0 \le f(x) \le 1$ } \\
    1 & \text{ if $1 < f(x)$. }
   \end{cases}
\]

\begin{theorem}\label{thm:invariance}
 Suppose $f$ is a multilinear polynomial such that $f(\xi) \in [0, 1]$
 for all $\xi \in \{-1, 1\}^n$. If $f$ satisfies
 $\max_i \Inf_i (f) \le \tau$ then for
 any $\eta > 0$,
 \begin{equation}\label{eq:truncation}
   \E (f_\eta (X) - \overline{f_\eta} (X))^2 \le C \tau^{c\eta}
 \end{equation}
\end{theorem}

We will now use
Theorem~\ref{thm:invariance} to prove Theorem~\ref{thm:robust-mist}.
First,~\eqref{eq:truncation} and the triangle inequality imply that
for any $0 < \rho' < 1$,
\begin{equation}\label{eq:ns-gaussian}
 \E_{\rho'} f_\eta (X) f_\eta (Y) \le \E_{\rho'} \overline{f_\eta}(X) \overline{f_\eta}(Y)
 + C \tau^{c\eta}.
\end{equation}
Now, 
\begin{equation}\label{eq:ns-gaussian-to-boolean}
\E_{\rho'} f_\eta(X) f_\eta(Y)
= \E_{\rho'} f_\eta(\xi) f_\eta(\sigma)
= \E_{e^{-2\eta} \rho'} f(\xi) f(\sigma).
\end{equation}
If we set $\rho' = e^{2\eta} \rho$ (assuming that $\eta$ is small enough so that
$e^{2\eta} \rho < 1$) then~\eqref{eq:ns-gaussian},~\eqref{eq:ns-gaussian-to-boolean},
and the assumption (\ref{eq:robust-mist}) of Theorem~\ref{thm:robust-mist} imply that
\begin{align*}
 \E_{\rho'} \overline{f_\eta}(X) \overline{f_\eta}(Y)
 & \ge J(\E f, \E g; \rho) - C \tau^{c\eta} - \delta \\
 & \ge J(\E \overline{f_\eta}, \E \overline{f_\eta}; \rho) - C \tau^{c\eta} - \delta \\
 & \ge J(\E \overline{f_\eta}, \E \overline{f_\eta}; \rho') - C(\rho) \eta - C \tau^{c\eta} - \delta,
\end{align*}
where the second inequality follows because (by~\eqref{eq:truncation})
$|E f - \E \overline{f_\eta}| \le C \tau^{c^\eta}$ and $\pdiff{J(x,y; \rho)}{x}$
is bounded.
Applying Corollary~\ref{cor:1-func-robustness} (with $\rho'$ in place of $\rho$)
to $\overline{f_\eta}$, we
see that there are $a, b \in \R^n$ such that
\[
 \E (\overline{f_\eta}(X) - 1_{\{\inr{a}{X-b} \ge 0\}})^2 \le
 C(\rho) (\eta + \tau^{c\eta} + \delta)^c.
\]
By~\eqref{eq:truncation} and the triangle inequality, we may replace
$\overline{f_\eta}$ by $f_\eta$:
\begin{equation}\label{eq:close-to-gaussian-half-space}
 \E (f_\eta(X) - 1_{\{\inr{a}{X-b} \ge 0\}})^2 \le
 C(\rho) (\eta + \tau^{c\eta} + \delta)^c.
\end{equation}

The next step is to pull~\eqref{eq:close-to-gaussian-half-space} back to
the discrete cube by replacing $X$ by $\xi$. We will do this using
Theorem~\ref{thm:invariance}. As a prerequisite, we need to
show that $1_{\{\inr{a}{x-b} \ge 0\}}$ has small influences; this is
essentially the same as saying that $a$ is well-spread:
\begin{lemma}\label{lem:well-spread}
There is an $a \in \R^n$ satisfying~\eqref{eq:close-to-gaussian-half-space} with
$\sum a_i^2 = 1$ and $\max_i |a_i| \le C \tau^c$.
\end{lemma}

Once we have shown that $1_{\{\inr{a}{x-b} \ge 0\}}$ has small influences,
we can use Theorem~\ref{thm:invariance} to show that the multilinear extension
of $1_{\{\inr{a}{x-b} \ge 0\}}$ is close to $1_{\{\inr{a}{x-b} \ge 0\}}$:

\begin{lemma}\label{lem:half-space-extension}
 Let $g^{a,b}$ be the multilinear extension of the function
 $x \mapsto 1_{\{\inr{a}{x - b} \ge 0\}}$. If
 $\sum_i a_i^2 = 1$ and $\max_i |a_i| \le \tau$ then
 for any $\eta > 0$,
 \[
  \E (g_\eta^{a,b}(X) - 1_{\{\inr{a}{X - b} \ge 0\}})^2 \le C (\eta + \tau^{c\eta}).
 \]
\end{lemma}

From Lemma~\ref{lem:half-space-extension} and the triangle inequality, we
conclude from~\eqref{eq:close-to-gaussian-half-space} that
\[
 \E (f_\eta(X) - g_\eta^{a,b}(X))^2 \le
 C(\rho) (\eta + \tau^{c\eta} + \delta)^c.
\]
Since $f_\eta - g_\eta^{a,b}$ is a multilinear polynomial, its second moment
remains unchanged when $X$ is replaced by $\xi$:
\[
 \E (f_\eta(\xi) - g_\eta^{a,b}(\xi))^2 \le
 C(\rho) (\eta + \tau^{c\eta} + \delta)^c.
\]
Now, $g^{a,b}$ is the indicator of a half-space on the cube; thus,
$\E (g_\eta^{a,b}(\xi) - g^{a,b}(\xi))^2 \le C \eta^c$
(see, for example,~\cite{BeKaSc:99}). Applying this and the triangle inequality, we have
\begin{equation}\label{eq:f-eta-close}
 \E (f_\eta(\xi) - g^{a,b}(\xi))^2 \le
 C(\rho) (\eta + \tau^{c\eta} + \delta)^c.
\end{equation}
The last piece is to replace $f_\eta$ by $f$. We do this with a simple lemma which
shows that for any function $f$, if $f_\eta$ is close to some indicator function
then $f$ is also close to the same indicator function.
\begin{lemma}\label{lem:indicator-close}
 For any functions $f: \{-1, 1\}^n \to [0, 1]$ and $g: \{-1, 1\}^n \to \{0, 1\}$
 and any $\eta > 0$,
 \[
  \E (f(\xi) - g(\xi))^2 \le C \E (f_\eta(\xi) - g(\xi))^2.
 \]
\end{lemma}

Applying Lemma~\ref{lem:indicator-close} to~\eqref{eq:f-eta-close}, we obtain
\[
 \E (f(\xi) - g^{a,b}(\xi))^2 \le
 C(\rho) (\eta + \tau^{c\eta} + \delta)^c.
\]
By choosing $\tau$ and $\eta$ small enough compared to $\delta$,
the proof of Theorem~\ref{thm:robust-mist} is complete, modulo the proofs
of Lemmas~\ref{lem:well-spread},~\ref{lem:half-space-extension}
and~\ref{lem:indicator-close}. We will prove them in the coming section.

\subsection{Gaussian and boolean half-spaces}

Here we will prove the lemmas of the previous section. Before
doing so, let us observe that
$\E X_i 1_{\{\inr{a}{X-b} \ge 0\}}$
is proportional to $a_i$, a fact which has already been noted
by Matulef et al.~\cite{MORS:09}:

\begin{lemma}\label{lem:half-space-corr}
\[\E X_i 1_{\{\inr{a}{X-b} \ge 0\}} =
 a_i \phi(\inr{a}{b}).
 \]
\end{lemma}

\begin{proof}
Let $e_i \in \R^n$ be the vector with 1 in position $i$ and 0 elsewhere. 
We may write $e_i = a_i a + a^\perp$, where $a^\perp$ is some element
of $\R^n$ which is orthogonal to $a$. Note that $\inr{X}{a^\perp}$ is
independent of $\inr{X}{a}$ and so
$\E \inr{X}{a^\perp} 1_{\{\inr{a}{X-b} \ge 0\}} = 0$. Hence,
\[
 \E X_i 1_{\{\inr{a}{X-b} \ge 0\}}
 = a_i \E \inr{a}{X} 1_{\{\inr{a}{X-b} \ge 0\}}
 = a_i \E X_1 1_{\{X_1 \ge \inr{a}{b}\}},
\]
where the second inequality follows because, by the rotational invariance
of the Gaussian measure, $\inr{a}{X}$ has the same distribution as $X_1$.
Finally, integration by parts implies that $\E X_1 1_{\{X_1 \ge \inr{a}{b}\}}
= \phi(\inr{a}{b})$.
\end{proof}

Next, we prove Lemma~\ref{lem:well-spread}. The point is that if a half-space
is close to a low-influence function $f$ then that half-space must also have low
influences. We can then perturb the half-space to have even lower influences
without increasing its distance to $f$ by much.

\begin{proof}[Proof of Lemma~\ref{lem:well-spread}]
Suppose that $f$ has influences bounded by $\tau$, and that
\begin{equation}\label{eq:f-and-half-space-close}
 \E (f(X) - 1_{\{\inr{a}{X-b} \ge 0\}})^2 \le \gamma,
\end{equation}
where $\gamma = C(\rho)(\eta + \tau^{c\eta} + \epsilon)^c$.
We will show that there is some $\bar a$ such that
$\sum_i \bar a_i^2 = 1$,
$\max_i |\bar a_i| \le C \tau^c$, and
\begin{equation}\label{eq:lem-well-spread-goal}
 \E (f(X) - 1_{\{\inr{\bar a}{X-b} \ge 0\}})^2 \le \gamma^c.
\end{equation}
When applied to the function $f_\eta$,
this will imply the claim of Lemma~\ref{lem:well-spread}.

Since the influences of $f$ are bounded by $\tau$, it follows in particular
that $|\hat f_{\{i\}}| \le \tau$ for every $i$. On the other hand,
$X_i$ form an orthonormal sequence and so
\begin{align}
 \E (f(X) - 1_{\{\inr{a}{X-b} \ge 0\}})^2
 &\ge \sum_i \big(\E X_i f(X) - \E X_i 1_{\{\inr{a}{X-b} \ge 0\}}\big)^2 \notag \\
 &= \sum_i \big(\hat f_{\{i\}} - a_i \phi(\inr{a}{b})\big)^2,
 \label{eq:f-and-half-space-far}
\end{align}
where the equality used Lemma~\ref{lem:half-space-corr}.
Defining $\kappa_{a,b} = \phi(\inr{a}{b})$,
it follows that for any $i$ with $|a_i| \kappa_{a,b} \ge C\tau$,
we have $\big(\hat f_{\{i\}} - a_i \kappa_{a,b}\big)^2 \ge
c a_i^2 \kappa_{a,b}^2$.
Combining this with~\eqref{eq:f-and-half-space-close} and~\eqref{eq:f-and-half-space-far},
\begin{equation}\label{eq:high-weights-ai}
 \gamma \ge \E (f(X) - 1_{\{\inr{a}{X-b} \ge 0\}})^2
 \ge c \kappa_{a,b}^2 \sum_{i: |a_i| \kappa_{a,b} \ge C \tau}  a_i^2.
\end{equation}
for every $i$.

We consider two cases, depending on whether $\kappa_{a,b}$ is large
or small. First, suppose that $\kappa_{a,b} \le \gamma^{1/3}$. Now,
$\kappa_{a,b} \ge c \Pr(X_1 \ge \inr{a}{b}) = \E 1_{\{\inr{a}{X-b} \ge 0\}}$,
while~\eqref{eq:f-and-half-space-close} implies that
\[\E f \le \sqrt{\gamma} + \E 1_{\{\inr{a}{X-b} \ge 0\}} \le \sqrt{\gamma}
+ C \gamma^{1/3} \le C \gamma^{1/3}.
\]
Since $f$ takes values in $[0, 1]$, it follows that $f$ is close to the zero function;
in particular,\emph{any} half-space with small enough measure will
satisfy~\eqref{eq:lem-well-spread-goal}.

Now suppose that $\kappa_{a,b} \ge \gamma^{1/3}$ (which is in turn larger than 
$\tau^{1/3}$); then~\eqref{eq:high-weights-ai} implies that
\[
 \sum_{i: |a_i| \ge C \tau^{2/3}}  a_i^2 \le C \gamma^{1/3}.
\]
If we define $\bar a$ to be the truncated version of $a$ (i.e.\ $\bar a_i = a_i$
if $|a_i| < C \tau^{2/3}$ and $\bar a_i = 0$ otherwise), then this implies
that $|a - \bar a|^2 \le C \gamma^{1/3}$. Moreover, $|\bar a|^2 \ge 1 - C \gamma^{1/3}$,
which implies that we can normalize $\bar a$ so that $|\bar a| = 1$, while preserving
the fact that $|a - \bar a|^2 \le C \gamma^{1/3}$ and $\max_i \bar a_i \le C 
\gamma^{1/3}$. Finally, $|a - \bar a|^2 \le C\gamma^{1/3}$ implies that
\[
 \E (1_{\{\inr{a}{X-b}\}} - 1_{\{\inr{\bar a}{X-b}\}})^2 \le C \gamma^c.
\]
By the triangle inequality and~\eqref{eq:f-and-half-space-close},~\eqref{eq:lem-well-spread-goal} follows.
\end{proof}

Next, we will prove Lemma~\ref{lem:half-space-extension}: if $g^{a,b}$ is the linear
extension of a low-influence half-space, then $g^{a,b}$ is close to a half-space.
Observe that this is very much not the case for general half-spaces: the linear
extension of $1_{x_1 \ge 0}$ is $x_1$, which is not close, in
$L_2(\R^n, \gamma_n)$, to any half-space.

\begin{proof}[Proof of Lemma~\ref{lem:half-space-extension}]
The proof rests on the invariance principle (Theorem~\ref{thm:invariance}).
Let $g$ be the linear extension of $1_{\{\inr{a}{x-b} \ge 0\}}$ and
let $h(x) = \inr{a}{x - b}$.
First of all, the Berry-Esseen theorem implies that
for any $M > 0$,
\begin{align*}
 \E g(\xi) h(\xi)
 &= \E h(\xi) 1_{h(\xi) \ge 0} \\
 &= \int_{\inr{a}{b}}^\infty \Pr(\inr{a}{x} \ge t)\ dt \\
 &\ge \int_{\inr{a}{b}}^M \Pr(\inr{a}{x} \ge t)\ dt \\
 &\ge \int_{\inr{a}{b}}^M \Pr(X_1 \ge t)\ dt - C M \tau \\
 &\ge \int_{\inr{a}{b}}^\infty \Pr(X_1 \ge t)\ dt - C M \tau - C e^{-M^2/2}.
\end{align*}
Choosing $M = \sqrt{\log(1/\tau)}$, we have
\begin{equation}\label{eq:boolean-cov-gaussian-cov}
 \E g(\xi) h(\xi) \ge \int_{\inr{a}{b}}^\infty \Pr(X_1 \ge t)\ dt - C \tau^c
 = \E 1_{\{\inr{a}{X-b} \ge 0\}} h(X) - C \tau^c.
\end{equation}
Now, $h$ is linear and so $h_t = e^{-t} h$; since $Q_\eta$ is self-adjoint, we have
\begin{align*}
 \E g(\xi) h(\xi)
 &= e^{\eta} \E g_\eta(\xi) h(\xi) \\
 &= e^{\eta} \E g_\eta(X) h(X) \\
 &\le e^{\eta} \E \overline{g_\eta}(X) h(X) + C e^\eta (\eta + \tau^{c\eta}) \\
 &\le \E \overline{g_\eta}(X) h(X) + C (\eta + \tau^{c\eta}),
\end{align*}
where the last inequality assumes that $\eta < 1$ (if not
then the lemma is trivial anyway).
Combining this with~\eqref{eq:boolean-cov-gaussian-cov},
\begin{equation}\label{eq:inner-prod-with-indicator}
\E 1_{\{\inr{a}{X-b} \ge 0\}} h(X) \le \E \overline{g_\eta}(X) h(X)
+ C(\eta + \tau^{c\eta}).
\end{equation}

Now, let $m(X) = 1_{\{\inr{a}{X-b} \ge 0\}} - \overline {g_\eta}(X)$
and take $\epsilon = \E |m|$;
note when $m \ne 0$ then $m$ and $h$ have the same sign.
Let $A = \{x: \inr{a}{X-b} \in [-\epsilon/2, \epsilon/2]\}$. Then
$\Pr(A) \le \epsilon/2$, and since $|m| \le 1$ we must have
$\E |m| 1_{A^c} \ge \E |m| - \Pr(A) \ge \epsilon/2$.
But on $A^c$ we have $|h(x)| \ge \epsilon/2$;
since the signs of $m$ and $h$ agree,
\[
 \E m(X) h(X) \ge \E m(X) h(X) 1_{\{X \in A^c\}} \ge \frac{\epsilon}{2}
 \E |m| 1_{A^c} \ge \frac{\epsilon^2}{4}.
\]
Applying this to~\eqref{eq:inner-prod-with-indicator} yields
$\epsilon
\le C(\eta + \tau^{c\eta})^c$.
So if we recall the definition of $\epsilon$, then we see that
\[
\E |1_{\{\inr{a}{X-b} \ge 0\}} - \overline {g_\eta}(X)|
\le C(\eta e^{2\eta} + \tau^{c\eta})^c.
\]
By changing the constant $c$, we may replace $\E |\cdot|$ with $\E(\cdot)^2$;
by~\eqref{eq:truncation}, we may replace $\overline{g_{\eta}}$ by
$g_\eta$. This completes the proof of the lemma. Note that the only
reason for proving this lemma with $g_\eta$ instead of $g$ was for extra convenience
when applying it; the statement of the lemma is also true with $g$ instead
of $g_\eta$.
\end{proof}

The only remaining piece is Lemma~\ref{lem:indicator-close}.

\begin{proof}[Proof of Lemma~\ref{lem:indicator-close}]
Suppose $f: \{-1, 1\}^n \to [-1, 1]$ and $g: \{-1, 1\}^n \to \{-1, 1\}$.
This does not exactly correspond to the statement of the lemma, but it will
be more convenient for the proof; we can recover the statement of the lemma
by replacing $f$ by $\frac{1 + f}{2}$ and $g$ by $\frac{1 + g}{2}$.

Let $\epsilon = \E (f_\eta (\xi) - g(\xi))^2$.
Since $g$ takes values in $\{-1, 1\}$, we have $\E g^2 = 1$; then the triangle
inequality implies that
\[
 \E f_\eta^2 \ge \E g^2 - 2\epsilon = 1 - 2\epsilon.
\]
Since $\E f^2 \le 1$, we have
\begin{align*}
 \E (f - f_\eta)^2
 &= \sum_{S \subset [n]} \hat f_S^2 (1 - e^{-\eta |S|})^2 \\
 &\le \sum_{S \subset [n]} \hat f_S^2 (1 - e^{-\eta |S|}) \\
 &= \E f^2 - \E f_\eta^2 \\
 &\le 2\epsilon.
\end{align*}
It then follows by the triangle inequality that
$\E (f - g)^2 \le C \epsilon$.
\end{proof}

\section{Spherical noise stability}
 We now use Theorem~\ref{thm:2-funcs-robustness} to prove Theorem~\ref{thm:sphere}. For a subset $A \subset S^{n-1}$, we define $\bar{A} \subset \R^n$ to be the {\em radial 
  extension} of $A$:
  \[
  \bar{A} = \{x \in \R^n: x \ne 0 \text{ and }
  \frac{x}{|x|} \in A\}
  \]
  From the spherical symmetry of the Gaussian distribution it immediately follows that $\P(\bar{A}) = Q(A)$. 
  The proof of Theorem~\ref{thm:sphere} crucially relies on the fact that $Q_{\rho}(A_1,A_2)$ is close to 
  $\Pr_{\rho}(\bar{A_1},\bar{A_2})$ in high dimensions. More explicitly it uses the following lemmas:
  
    \begin{lemma} \label{lem:cap-half-space}
   For any half-space $H = \{x \in \R^n : \inr{a}{x} \le b\}$ there is a spherical cap $B = \{x \in S^{n-1} : \inr{a}{x} \le b'\}$ such that $\Pr(\bar{B}) = \Pr(H)$ and 
   \[
    \P(\bar{B} \symdiff H) \le C n^{-1/2} \log n.
   \]
  \end{lemma}

  \begin{lemma} \label{lem:sphere} 
  For any two sets $A_1, A_2 \subset S^{n-1}$ and any $\rho \in [-1+\epsilon,1-\epsilon]$ it holds that 
  \[
  |Q_{\rho}(A_1,A_2) - \Pr_{\rho}(\bar{A_1},\bar{A_2})| \leq C(\eps) n^{-1/2} \log n.
  \]
  \end{lemma}

  Given Lemmas~\ref{lem:sphere} and~\ref{lem:cap-half-space}, the proof of Theorem~\ref{thm:sphere} is an easy corollary of Theorem~\ref{thm:2-funcs-robustness}:
  
  \begin{proof}[Proof of Theorem~\ref{thm:sphere}]
  Define $\delta_\ast = \delta(\bar{A_1}, \bar{A_2})$.
  Let $H_1, H_2$ be parallel half-spa\-ces with $\Pr(H_i) = \Pr(\bar{A_i})$, and
  let $B_1,B_2$ be the corresponding caps whose existence
  is guaranteed by Lemma~\ref{lem:cap-half-space}. 
  Then 
  \begin{align*}
 \delta_{\ast} &= \delta(\bar{A_1}, \bar{A_2}) \\
  &= \P_\rho(X \in H_1, Y \in  H_2) - \P_{\rho}(X \in \bar{A_1}, Y \in \bar{A_2}) \\
  &\le \P_\rho(X \in \bar{B_1}, Y \in \bar{B_2}) - \P_{\rho}(X \in \bar{A_1}, Y \in \bar{A_2}) + O(n^{-1/2} \log n) \\
  &\le Q_\rho(X \in B_1, Y \in B_2) - Q_{\rho}(X \in A_1, Y \in A_2) + O(n^{-1/2}\log n) \\
  & = \delta(A_1,A_2) + O(n^{-1/2} \log n),
 \end{align*}
 where the first inequality follows from Lemma~\ref{lem:cap-half-space}
 and the second follows from Lemma~\ref{lem:sphere}.
 
 From Theorem~\ref{thm:2-funcs-robustness} it follows that there are parallel half-spaces $H_1$ and $H_2$ with $\P(H_i) = \P(\bar{A_i})$ satisfying
 \[
 \Pr(\bar{A_i} \symdiff H_i)
    \le C(\rho) m^{c(\rho)} \delta_\ast^{\frac{1}{4} \frac{(1 - \rho)(1-\rho^2)}{1 + 3\rho}}.
 \]
 By Lemma~\ref{lem:cap-half-space}, there are parallel caps
 $B_1$ and $B_2$ such that
  \[
 Q(A_i \symdiff B_i) = \P(\bar{A_i} \symdiff \bar{B_i})
    \le C(\rho) m^{c(\rho)} \delta_\ast^{\frac{1}{4} \frac{(1 - \rho)(1-\rho^2)}{1+3\rho}}.
    \qedhere
 \]
 \end{proof}
 
 The proof of Lemma~\ref{lem:cap-half-space} is quite simple, so we present
 it first:
 \begin{proof}[Proof of Lemma~\ref{lem:cap-half-space}]
   Let $H = \{x \in \R^n: \inr{a}{x} \le b\}$, and suppose without loss
   of generality that $b \ge 0$. For any $\epsilon > 0$, define
   \begin{align*}
    H_\epsilon^+ &= \{x \in \R^n: \inr{a}{x} \le b(1 + \epsilon)\} \\
    H_\epsilon^- &= \{x \in \R^n: \inr{a}{x} \le b(1 - \epsilon)\}.
   \end{align*}
   Note that $\Pr(H_\epsilon^+ \setminus H_\epsilon^-) \le C \epsilon$.
   
   Now define $B = \{x \in S^{n-1}: \inr{x}{a} \le b /\sqrt n\}$. Then
   $\bar B = \{x \in \R^n : \inr{x}{a} \le b |x| /\sqrt n\}$, and so
   \begin{align*}
    \Pr(\bar B \setminus H_\epsilon^+)
    &= \Pr((1 + \epsilon) b \le \inr{X}{a} \le b |X| / \sqrt n) \\
    &\le \Pr(|X| \ge (1 + \epsilon) \sqrt n) \\
    &\le C e^{-c\epsilon^2 n},
   \end{align*}
  where the last line follows from standard concentration inequalities
  (Bernstein's inequalities, for example). Similarly,
  \[
   \Pr(H_\epsilon^- \setminus \bar B)
   \le \Pr(|X| \le (1 - \epsilon) \sqrt n) \le C e^{-c \epsilon^2 n}.
  \]
  Since $H_\epsilon^- \subset H \subset H_\epsilon^+$ and
  $\Pr(H_\epsilon^+ \setminus H_\epsilon^-) \le C \epsilon$, it follows that
  \[
   \Pr(H \symdiff \bar B) \le C \epsilon + C e^{-c \epsilon^2 n}.
  \]
  By choosing $\epsilon = C n^{-1/2} \log n$, we have
  \begin{equation}\label{eq:small-symdiff}
   \Pr(H \symdiff \bar B) \le C n^{-1/2} \log n.
   \end{equation}
   
   Now, the lemma claimed that we could ensure $\Pr(\bar B) = \Pr(H)$.
   Since the volume of the cap $B' := \{\inr{a}{x} \le b'|x|\}$ is continuous
   and strictly increasing in $b'$, we may define $b'$ to be the unique
   real number such that $\Pr(\bar B') = \Pr(H)$.
   Now, either $B \subset B'$ or $B' \subset B$; hence
   $\Pr(\bar B \symdiff \bar B') = |\Pr(\bar B) - \Pr(\bar B')|$.
   On the other hand,~\eqref{eq:small-symdiff} implies
   that
   \[
    |\Pr(\bar B) - \Pr(\bar B')| = |\Pr(\bar B) - \Pr(H)| \le C n^{-1/2} \log n,
   \]
    and so
   the triangle inequality leaves us with
   \[
    \Pr(H \symdiff \bar B')
    \le \Pr(H \symdiff \bar B) + \Pr(B \symdiff \bar B')
    \le C n^{-1/2} \log n.\qedhere
   \]
 \end{proof}

 We defer the proof of Lemma~\ref{lem:sphere} until the next section,
 since this proof requires an introduction to spherical harmonics.
 
 \subsection{Spherical harmonics and Lemma~\ref{lem:sphere}}
 We will try to give an introduction to spherical harmonics which is
 as brief as possible, while still containing enough material for us to
 explain the proof of Lemma~\ref{lem:sphere} adequately. A slightly less
 brief introduction is contained in~\cite{KlartagRegev:11}; for a full
 treatment, see~\cite{Muller:66}.
 
 Let $\calS_k$ be the linear space consisting of harmonic,
 homogeneous, degree-$k$ polynomials.
 We will think of $\calS_k$ as a subspace of $L_2(S^{n-1}, Q)$; then
 $\{\calS_k: k \ge 0\}$ spans $L_2(S^{n-1}, Q)$. One can easily check
 that $\calS_k$ is invariant under rotations. Hence it is a
 representation of $SO(n)$. It turns out, moreover, that $\calS_k$
 is an irreducible representation of $SO(n)$; combined with
 Schur's lemma, this leads to the following important property:
 \begin{lemma}\label{lem:irreducible-eigenspaces}
  If $T: L_2(S^{n-1}) \to L_2(S^{n-1})$ commutes with rotations
  then $\{\calS_k: k \ge 0\}$ are the eigenspaces of $T$.
 \end{lemma}
 
 In particular, we will apply Lemma~\ref{lem:irreducible-eigenspaces}
 to the operators $T_\rho$ defined by
 $(T_\rho f)(X) = \E (f(Y) | X)$, where $(X, Y) \sim Q_\rho$. In other words,
 $(T_\rho f)(x)$ is the average of $f$ over the set
 $\{y \in S^{n-1} : \inr{x}{y} = \rho\}$. Clearly, $T_\rho$ commutes
 with rotations;
 hence Lemma~\ref{lem:irreducible-eigenspaces} implies that
 $\{\calS_k: k \ge 0\}$ are the eigenspaces of $T_\rho$. In particular,
 there exist $\{ \mu_k(\rho): k \ge 0\}$ such that $T_\rho f = \mu_k(\rho) f$
 for all $f \in \calS_k$. Moreover, to compute
 $\mu_k(\rho)$, it is enough to compute $T_\rho f$ for a single
 $f \in \calS_k$. For this task, the Gegenbauer polynomials provide good
 candidates: define
 \[
  G_k(t) = \E (t + i W_1 \sqrt{1-t^2})^k,
 \]
 where the expectation is over $W = (W_1, \dots, W_{n-1})$ distributed
 uniformly on the sphere $S^{n-2}$.
 Define $f_k(x) = G_k (x_1)$; it turns out that $f_k \in \calS_k$; on
 the other hand, one can easily check that $f_k(e_1) = 1$, while
 $(T_\rho f_k)(e_1) = G_k(\rho)$. From the discussion above, it then follows that
 \[
  \mu_k(\rho) = \E (\rho + i W_1 \sqrt{1 - \rho^2})^k.
 \]
 With this explicit formula, we can show that $\mu_k(\rho)$ is continuous in
 $\rho$:
 \begin{lemma}\label{lem:mu-k-cont}
 For any $\epsilon > 0$ there exists $C(\epsilon)$ such that
 if $\rho, \eta \in [-1+\epsilon, 1-\epsilon]$ then
 \[
  |\mu_k(\rho) - \mu_k(\eta)| \le C(\epsilon)(|\rho-\eta| + n^{-1/2}).
 \]
 \end{lemma}
 
 We will leave the proof of Lemma~\ref{lem:mu-k-cont} to the end. Instead,
 let us show how it can be used to prove that
 $Q_\rho(X \in A_1, Y \in A_2)$ is continuous in $\rho$.
 
 \begin{lemma}\label{lem:sphere_cont}
 For any $\epsilon > 0$ there exists $C(\epsilon)$ such that
 if $\rho, \eta \in [-1+\epsilon, 1-\epsilon]$ then
 \[
 |Q_\rho(X \in A_1, Y \in A_2) - Q_\eta(X \in A_1, Y \in A_2)| \leq \\
  C(\eps) Q^{1/2}(A_1) Q^{1/2}(A_2) (|\rho-\eta|  + n^{-1/2}).
  \]
 \end{lemma}
 
\begin{proof} 
Take $f, g \in L_2(S^{n-1}, Q)$ and
write $f = \sum_{k=0}^{\infty} f_k$ where $f_k \in \calS_k$.
Then 
\[
|\E g T_\rho f - \E g T_\eta f| \leq \| T_\rho f - T_\eta f \|_2 \| g \|_2
\]
(where $\|f\|_2$ denotes $\sqrt{\E f^2}$)
and 
\[
\| T_\rho f - T_\eta f \|_2^2 = \sum_{k=0}^{\infty} (\mu_k(\rho)-\mu_k(\eta))^2 \| f_k \|_2^2 
\]
By Lemma~\ref{lem:mu-k-cont}, we have
\[
\| T_\rho f - T_\eta f \|_2 \leq C(\eps)\big( |\rho-\eta| + n^{-1/2}\big) \| f \|_2, 
\]
and therefore 
\[
|\E g T_\rho f - \E g T_\eta f| \leq C(\eps) \| f \|_2 \| g \|_2 (|\rho-\eta| + n^{-1/2}).
\]
Note that if $f = 1_{A_1}$ and $g = 1_{A_2}$ then
$\E g T_\rho f = Q_\rho (X \in A_1, Y \in A_2)$, while
$\|f\|_2 = Q(A_1)^{1/2}$.
\end{proof}

 The proof of Lemma~\ref{lem:sphere} is straightforward once we know
 Lemma~\ref{lem:sphere_cont}. As we have already mentioned, normalized
 Gaussian vectors from $\Pr_\rho$ have a joint distribution that is
 similar to $Q_\rho$, except that their inner products are close to $\rho$
 instead of being exactly $\rho$. But Lemma~\ref{lem:sphere_cont} implies
 that a small difference in $\rho$ doesn't affect the noise sensitivity by
 much.
 
 \begin{proof}[Proof of Lemma~\ref{lem:sphere}]
 Let $X,Y$ be distributed according to $\P_\rho$. Then
 \[
 \P_{\rho}(X \in \bar{A_1}, Y \in \bar{A_2})  = \P_{\rho}\Big(\frac{X}{|X|} \in A_1, \frac{Y}{|Y|} \in A_2\Big),
 \]
 Note that conditioned on $|X|, |Y|$ and $\inr{X}{Y}$, the variables $X/|X|, Y/|Y|$ are distributed according to 
 $Q_r$, where $r=\inr{X}{Y}/(|X| | Y |)$. 
 Now with probability $1-\frac{1}{n^2}$ it holds that 
 \[
 | X |^2, | Y |^2 \in n \pm C n^{1/2} \log n, \quad 
 \inr{X}{Y} \in \rho n \pm C n^{1/2} \log n.
 \]
 On this event, we have
 \[
 r = \Big\langle\frac{X}{|X|},\frac{Y}{|Y|}\Big\rangle \in \rho \pm C n^{-1/2} \log n.
 \]
 Using Lemma~\ref{lem:sphere_cont} we get that 
 \[
 \P_{\rho}(X \in \bar{A_1}, Y \in \bar{A_2}) \leq Q_{\rho}(X \in A_1, Y \in A_2) + C(\epsilon) n^{-1/2} \log n.
 \]
A similar argument yields a bound in the other direction and concludes the proof. 
\end{proof} 

Our final task is the proof of Lemma~\ref{lem:mu-k-cont}:
\begin{proof}[Proof of Lemma~\ref{lem:mu-k-cont}]
Define $Z_\rho = \rho + iW_1 \sqrt{1-\rho^2}$
(recalling that
$W = (W_1, \dots, W_{n-1})$ is uniformly distributed
on $S^{n-2}$)
so that $\mu_k(\rho) = \E Z_\rho^k$.
Note that if $|W_1| \le \frac 12$ (which happens with probability at least $1 - \exp(-cn)$) then
\[
|Z_\rho| = \rho^2+ W_1 (1-\rho^2) \le \frac{1 + \rho^2}{2} \le 1 - \frac{\epsilon}{2}
\le \exp(-c\epsilon).
\]

Now,
\begin{align}
\mu_k(\rho)-\mu_k(\eta)
&= \E(Z_\rho^k - Z_\eta^k) \notag \\
&= \E (Z_\rho-Z_\eta) \sum_{j=1}^{k-1} Z_\rho^j Z_\eta^{k-1-j}.
\label{eq:eigen-1}
\end{align}
If $|W_1| \le \frac 12$ then $|Z_\rho^j Z_\eta^{k-1-j}| \le \exp(-c \eps k)$
and so 
\[\Big|\sum_j Z_\rho^j Z_\eta^{k-1-j}\Big| \le k \exp(-c \eps k) \le C(\eps)\]
Applying this to~\eqref{eq:eigen-1}, we have
\begin{align*}
 |\mu_k(\rho) - \mu_k(\eta)| &= \E (Z_\rho^k - Z_\eta^k) 1_{\{|W_1| \ge 1/2\}} + 
 \E 1_{\{|W_1| < 1/2\}} (Z_\rho-Z_\eta) \sum_{j=1}^{k-1} Z_\rho^j Z_\eta^{k-1-j} \\ 
 &\le 2 \Pr(|W_1| \ge 1/2) + C(\eps) \E |Z_\rho - Z_\eta| \\
 &\le \exp(-cn) + C(\eps) |\rho - \eta|,
\end{align*}
where $\E |Z_\rho - Z_\eta| \le C(\eps) |\rho - \eta|$ because
$|\sqrt{1 - \rho^2} - \sqrt{1 - \eta^2}| \le C(\eps)|\rho - \eta|$.
\end{proof}

\subsection{Spherical noise and Max-Cut}

In this section, we will outline how robust noise sensitivity
on the sphere (Theorem~\ref{thm:sphere})
implies that half-space rounding for the Goemans-Williamson algorithm
is robustly optimal (Theorem~\ref{thm:optimal-rounding}).
The key for making this connection is Karloff's family of graphs~\cite{Karloff:99}:
for any $n, d \in \N$, let $G_{n,d} = (V_{n,d}, E_{n,d})$ be the graph whose vertices are the $\binom{n}{n/2}$
balanced elements of $\{-n^{-1/2}, n^{-1/2}\}^n$, and with an edge between $u$ and $v$
if $\inr{u}{v} = d/n$. Karloff showed that
if $d \le n/24$ then
the optimal cut of $G_{n,d}$ has value $|E_{n,d}| (1-d/n)$.
Moreover, the identity
embedding (and any rotation of it) is an optimal embedding of $G_{n,d}$ into $S^{n-1}$.
In these embeddings, every angle between two connected vertices is $d/n$; hence, it is easy
to calculate the expected value of a rounding scheme:

\begin{lemma}\label{lem:cut-noise}
 Let $(X, Y)$ be distributed according to $Q_{d/n}$.
 For any rounding scheme $R$,
 \[
 \Cut(G_{n,d}, R) \le \frac{|E_{n,d}|}{2} \E |R(X) - R(Y)|,
 \]
 where the expectation is with respect to $X, Y$ and $R$.
\end{lemma}

\begin{proof}
Recall that
\begin{align*}
 \Cut(G, R) &= \frac{1}{2} \min_f \E_R \sum_{(u, v) \in E} |R(f(u)) - R(f(v))| \\
 &\le \frac{1}{2} \E_R \E_f \sum_{(u, v) \in E} |R(f(u)) - R(f(v))|,
\end{align*}
where the expectation is taken over all rotations $f$. But if $f$ is a uniformly random rotation
then for every $(u, v) \in E_{n,d}$, the pair $(f(u), f(v))$
is equal in distribution to the pair $(X, Y)$ (and both pairs are independent of $R$).
\end{proof}

Theorem~\ref{thm:optimal-rounding} follows fairly easily from
Lemma~\ref{lem:cut-noise}, Theorem~\ref{thm:sphere},
and the fact that $\MaxCut(G_{n,d}) = |E_{n,d}| (1 - d/n)$.
Indeed, choose $n$ and $d$ such that
$|d/n - \cos^{-1} \theta^*| \le n^{-1}$, where $\theta^* \approx 2.33$ minimizes $\alpha_\theta$,
and suppose there is a rounding scheme $R$ such that
\[
 \Cut(G_{n,d}, R) \ge \MaxCut(G_{n,d})(\alpha_{\theta^*} - \epsilon).
\]
Let $\theta = \cos(d/n)$;
since $\alpha_\theta$ is continuous in $\theta$, it follows that
$|\alpha_\theta - \alpha_{\theta^*}| \le \frac{C}{n}$. Taking
$\epsilon_\star = \max\{\epsilon, n^{-1/2} \log n\}$, we have
$|\alpha_\theta - \alpha_{\theta^*}| \le C \epsilon_\star$ and so
\begin{align*}
 \Cut(G_{n,d}, R)
 &\ge \MaxCut(G_{n,d})(\alpha_{\theta} - C \epsilon_\star) \\
 &= |E_{n,d}|(1-\cos \theta)(\alpha_{\theta} - C \epsilon_\star) \\
 &= \frac 2\pi \theta |E_{n,d}| (1-C \epsilon_\star).
\end{align*}
By Lemma~\ref{lem:cut-noise},
$
 \frac 12 \E |R(X) - R(Y)| \ge \frac 2\pi \theta (1-C \epsilon_\star).
$
If we define the (random) subset $A_R \subset S^{n-1}$ by $A_R = \{x : R(x) = 1\}$,
and set $\rho = \cos \theta$ then
\[
 \Pr(A_R) - \Stab_\rho(A_R) = \frac 12 \E \big(|R(X) - R(Y)|\big\mid R\big)
\]
Taking expectations,
\begin{equation}\label{eq:cut-stability}
 \E(\Pr(A_R) - \Stab_\rho(A_R)) = \frac 12 \E |R(X) - R(Y)|
 \ge \frac 2\pi \arccos \rho - C \epsilon_\star.
\end{equation}
Let $\delta_R$ be the random deficit
$\delta_R = \frac 2\pi \arccos \rho - (\Pr(A_R) - \Stab_\rho(A_R))$, so
that~\eqref{eq:cut-stability} implies $\E \delta_R \le C \epsilon_\star$.
Take $\eta_R$ to be the distance from $A_R$ to the nearest hemisphere:
$\eta_R = \min\{\Pr(A_R \symdiff B) : B \text{ is a hemisphere}\}$ and let
$B_R$ be a hemisphere that achieves the minimum (which is attained because the set
of hemispheres is compact with respect to the distance $d(A, B) = \Pr(A \symdiff B)$).
Recall that $\theta \approx \theta^* \approx 2.33$ and so $\rho = \cos \theta < 0$;
by the same symmetries discussed following Theorem~\ref{thm:2-funcs-robustness},
Theorem~\ref{thm:sphere} applies for $\rho < 0$, but with the deficit inequality
reversed. Hence, $\eta_R \le C \max\{\delta_R, n^{-1/2} \log n\}^c$.
Taking expectations,
\[
 \E \eta_R
 \le C \E \max\{\delta_R, n^{-1/2}\log n\}^c
 \le C \max\{\E \delta_R, n^{-1/2}\log n\}^c = C' \epsilon_\star^c.
\]

Consider the rounding scheme $\tilde R(y)$ which is 1 when $y \in B_R$ and $-1$ otherwise.
Then $\E \big(|R(Y) - \tilde R(Y)|\big\mid R\big) = 2 \eta_R$, and so the displayed equation
above implies that
\[\E |R(Y) - \tilde R(Y)| \le C \epsilon_\star^c.\]
Since $\tilde R$ is a hyperplane rounding scheme, this completes the proof of Theorem~\ref{thm:optimal-rounding}.

\subsection*{Acknowledgements}
Part of the work on this paper
was done while the second author was visiting the
Universit\'e de Paul Sabatier. He would like to thank
Michel Ledoux and Franck Barthe for hosting him, and for fruitful
discussions.

\bibliography{my,robustness,all}
\bibliographystyle{plain}
\end{document}